\documentclass[12pt]{amsart}
\usepackage{amssymb,epsfig,amsmath,latexsym,amsthm}
\usepackage{graphicx}
\usepackage{enumitem}
\usepackage{comment}
\theoremstyle{plain}
\newtheorem{theorem}{Theorem}[section]
\newtheorem{lemma}[theorem]{Lemma}
\newtheorem*{claim}{Claim}
\newtheorem{proposition}[theorem]{Proposition}
\newtheorem{corollary}[theorem]{Corollary}

\theoremstyle{definition}

\newtheorem{remark}[theorem]{Remark}

\newtheorem{example}[theorem]{Example}
\newtheorem*{thm1.1}{Theorem 1.1}


\usepackage{epsfig,color}
\makeatother

\theoremstyle{definition}

\def\fnum{equation}

\newtheorem{Def}[\fnum]{Definition}

\numberwithin{equation}{section}

\usepackage[hidelinks]{hyperref}

\begin{document}
\title[ The Isoperimetric Problem For  Lens Spaces ]{The Isoperimetric Problem For   Lens Spaces }

\author{Celso  Viana}\address{University College London  UCL Union Building,  25 Gordon Street, London WC1E 6BT}
\thanks{C.V. was  supported by The EPSRC Centre for Doctoral Training in Geometry and Number Theory:
The London School of Geometry and Number Theory}
\email{c.viana@imperial.ac.uk}

\begin{abstract}
We solve the isoperimetric problem in  the Lens spaces   with large fundamental group. Namely, we prove that the isoperimetric surfaces are geodesic spheres or tori of revolution about geodesics.
We also show that the isoperimetric problem in $L(3,1)$ and $L(3,2)$ follows from  the proof of the  Willmore conjecture by Marques and Neves.
\end{abstract}
\maketitle
\section{Introduction}

The isoperimetric problem is a classical  subject in Differential Geometry with its origin in ancient Greece. 
It consists in finding on a Riemannian manifold $M$ the regions that   minimize the perimeter among  sets  enclosing  the same volume. The solutions are called isoperimetric regions and their boundaries isoperimetric hypersurfaces. 
The Euclidean plane is historically the first space where the problem started to be investigated rigorously.
It is now a well known fact that  the round circles are the optimal curves for the problem. 
This geometric fact is often seen through    the following classical inequality:
\begin{equation}
L^2\geq 4\pi A,
\end{equation}
where $L$ and $A$ stand for the length  and the enclosed area  of a simple closed curve $\gamma: \mathbb{S}^1 \rightarrow \mathbb{R}^2$ respectively. 

The framework of geometric measure theory and its tools work successfully well in tackling the aspects of existence and regularity of this variational problem.
When $M^{n+1}$ is closed or homogeneous, then isoperimetric hypersurfaces do exist and are smooth up to a closed set of Hausdorff dimension $n-7$.
The regular part is a stable hypersurface of constant mean curvature.
This  major contribution to the isoperimetric problem  was achieved thanks to the efforts of many people, including F. Almgren,  R. Schoen, L. Simon, F. Morgan, and others  (see \cite{M} for a comprehensive list).
Despite the long history of the problem, it remains largely open with few   3-manifolds where the problem is completely understood.

The simply connected space forms, $\mathbb{S}^{n+1}$, $\mathbb{R}^{n+1}$ and $\mathbb{H}^{n+1}$, are the most appealing spaces to begin  the study of the isoperimetric  problem.
It turns out that their symmetries  are enough to characterize the geodesic spheres as the   isoperimetric hypersurfaces.

A complete solution on $\mathbb{S}^2\times\mathbb{S}^1$ with the standard product metric can be found in \cite{PR}. 
For other homogeneous manifolds with certain product structure, such as $\mathbb{H}^2\times\mathbb{R}$, $\mathbb{H}^2\times \mathbb{S}^1$, and $\mathbb{S}^n\times \mathbb{R}$, see \cite{HH}, \cite{PR}, and \cite{P} respectively. The case $\mathbb{S}^1\times \mathbb{R}^n$ is also treated in \cite{PR} and they show that when $n\geq 9$  unduloids are minimizers rather than cylinders for certain volumes. One key idea exploited   in the results listed above is the use of  symmetry to reduce the problem to an ODE analysis. 
The case $T^2 \times \mathbb{R}$ where $T^2$ is a flat torus is   not solved in full;  great progress can be found in \cite{RR1} and in \cite{HPRR}. More generally, it is known  that    boundaries of small isoperimetric regions in closed manifolds are  nearly round spheres, see \cite{MJ}.
To finish this brief and not  exhaustive  account of results on the isoperimetric problem we mention that    Bray and  Morgan ( \cite{B} and  \cite{BM}) classified the horizon  homologous   isoperimetric surfaces   in the Schwarzschild manifold. The work in \cite{B}   highlighted the interesting relationship between isoperimetric surfaces and the concept of mass in general relativity.  

We will be interested in spherical space forms in this paper.
A significant result in this direction was given in \cite{RR} where A. Ros and M. Ritor\'{e}   solved the isoperimetric problem in the projective space $\mathbb{RP}^3$. They show that the solutions are geodesic spheres or flat tori. 
Later, A. Ros \cite{R} used the above result to give a proof of the Willmore conjecture in $\mathbb{S}^3$ for  the special case of surfaces that are invariant by the antipodal map.

The real projective space is a special case of an important family of Riemannian manifolds, namely the Lens spaces $L(p,q)$.
These are elliptic space forms obtained as a quotient of $\mathbb{S}^3$ by a finite group of isometries  that are isomorphic to $\mathbb{Z}_p$ but which also depend on $q$.
They are, along with $\mathbb{S}^2\times \mathbb{S}^1$,  characterized by having Heegaard genus one. 

We give a complete solution for the isoperimetric problem in the Lens spaces with large fundamental group:

\begin{theorem}\label{theorem 1}
\textit{There exists a positive integer $p_0$ such that for every $p\geq p_0$ and every $q\geq 1$ the isoperimetric surfaces in  $L(p,q)$ are  either geodesic spheres or tori of revolution about geodesics.}
\end{theorem}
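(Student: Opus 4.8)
The plan is to follow the strategy pioneered by Ritor\'e--Ros for $\mathbb{RP}^3$, reducing the classification of isoperimetric surfaces to a pinching argument that becomes valid once the fundamental group is large. Let $\Sigma\subset L(p,q)$ be an isoperimetric surface enclosing a region $\Omega$. By the regularity theory quoted in the introduction, $\Sigma$ is a smooth, embedded, stable constant mean curvature surface. The first step is to understand the topology of $\Sigma$: lifting to the universal cover $\mathbb{S}^3$, the preimage $\tilde\Sigma$ is a surface invariant under the $\mathbb{Z}_p$-action, and $\Sigma = \tilde\Sigma/\mathbb{Z}_p$. Since the Heegaard genus of $L(p,q)$ is one, a separating surface has very constrained topology; a first-eigenvalue/stability estimate in the spirit of Ros should force $\Sigma$ to be either a sphere (bounding a ball) or a torus (isotopic to the Heegaard torus), ruling out higher genus. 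The case where $\Sigma$ is a sphere is then handled by an Alexandrov-type or direct comparison argument showing it must be a geodesic sphere.

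The heart of the argument is the torus case. Here I would run a scaling-and-pinching argument: when $p$ is large, the volume of $L(p,q)$ is small ($\Vol(L(p,q)) = 2\pi^2/p$), so for volumes that are a definite fraction of the total, the isoperimetric region cannot be a small ball, and by the comparison with $\mathbb{S}^3$ isoperimetric profiles (via the covering), the surface $\Sigma$ must have area bounded below by a constant times $\Vol(L(p,q))^{2/3}$ while also satisfying an upper bound coming from test competitors (flat Heegaard tori). Lifting to $\mathbb{S}^3$, $\tilde\Sigma$ is a CMC torus with area at most $p$ times a quantity that shrinks, so its mean curvature is large; combined with the Hopf-fibration structure of $L(p,q)$ and a Simons-type or Gauss-equation integral estimate, I expect to show that $\tilde\Sigma$ is umbilic-free but with second fundamental form pinched toward that of a flat Clifford-type torus, hence $\Sigma$ is a torus of revolution about a geodesic (equivalently, equidistant from a closed geodesic, i.e. a tube). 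The symmetry is recovered by showing the CMC torus in $\mathbb{S}^3$ in this pinched regime must be rotationally symmetric — a rigidity statement for stable CMC tori of small area.

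The main obstacle I anticipate is the borderline range of volumes: for volumes where the geodesic sphere and the tube have nearly equal area, one must decide the minimizer, and more importantly one must exclude \emph{other} stable CMC surfaces (non-symmetric tori, or tori not isotopic to the Heegaard torus) whose existence is not a priori forbidden. This is where largeness of $p$ is essential: the estimate comparing $\Area(\Sigma)$ with $p^{1/3}$-type lower bounds forces the lift to be highly symmetric, and the ODE analysis of rotationally invariant CMC surfaces in $\mathbb{S}^3$ — together with a uniqueness statement for the ODE among surfaces descending to $L(p,q)$ — closes the argument. A secondary technical point is making the threshold $p_0$ effective, which requires quantifying the stability/pinching inequalities uniformly in $q$; the independence from $q$ should follow because the relevant geometric quantities (volume, systole bounds, the shape of the Heegaard torus) depend on $p$ in a controlled way and $q$ only affects the gluing, not the local geometry.
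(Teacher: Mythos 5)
There is a genuine gap at the crucial step. You dispose of the higher-genus case with the assertion that ``a first-eigenvalue/stability estimate in the spirit of Ros should force $\Sigma$ to be either a sphere \ldots{} or a torus, ruling out higher genus.'' This is exactly what fails when $p$ is large, and it is the entire content of the theorem. The Ritor\'e--Ros stability estimate only gives $g(\Sigma)\leq 3$ together with $(1+H^2)|\Sigma|\leq 2\pi$; lifting to $\mathbb{S}^3$ and invoking Li--Yau or the Willmore conjecture yields $2\pi^2 l\leq 2\pi|G|$ for the number $l$ of components of the lift, which excludes genus $2$ and $3$ only for $|G|\leq 3$ (this is precisely Proposition 2.4 of the paper, and it is why $L(3,1)$, $L(3,2)$ can be handled by soft arguments but $L(p,q)$ for large $p$ cannot). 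Heegaard genus one also does not constrain the genus of a separating surface. The paper's actual proof of Theorem 1.1 is an entirely different mechanism: assume a sequence of genus $2$ or $3$ isoperimetric surfaces in $L(p,q)$ with $p\to\infty$, rescale the metric by $p^2$, extract a Cheeger--Gromov limit which is a flat manifold of rank at most one (after an orbit-structure dichotomy on the Clifford torus), show via a curvature bound and Da Silveira's theorem that the surfaces converge with multiplicity one to a union of totally geodesic planes in $\mathbb{S}^1\times\mathbb{R}^2$, and then exhibit an explicit volume-preserving cut-and-paste competitor with less area, contradicting minimality. None of this appears in your proposal, so the proposal does not prove the statement.

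A secondary point: the part you identify as ``the heart of the argument'' (a pinching/rigidity analysis to show the genus one surface is rotationally symmetric) is in fact the easy part and requires no new work. Once $g(\Sigma)=1$, Ritor\'e--Ros already show a stable CMC torus in a spherical space form is flat, and a flat CMC torus in $\mathbb{S}^3$ is congruent to a Clifford torus $T_r$ by the rigidity of the second fundamental form; such a torus descends to a torus of revolution about a geodesic in $L(p,q)$. Similarly the genus zero case follows from the Hopf differential, not from an Alexandrov argument. So the effort in your outline is concentrated where the problem is already solved, while the step that genuinely needs the hypothesis $p\geq p_0$ is left unproved.
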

The isoperimetric problem in dimension three  was previously solved for only a finite number of  non-diffeomorphic 3-manifolds. 

We also add to the literature the observation that   the proof of the Willmore  conjecture  by Marques and Neves \cite{MN} can be applied to extend the work of Ros and Ritor\'{e} \cite{RR},  on the classification of stable CMC surfaces in $\mathbb{RP}^3$, to  $L(3,1)$ and $L(3,2)$: \footnote{ $L(3,1)$ and $L(3,2)$ are diffeomorphic but not isometric.}
\begin{theorem}\label{theorem 2}
\textit{The  immersed stable  CMC surfaces in $L(3,1)$  and $L(3,2)$ are  either geodesic spheres or  flat tori. Moreover,  the minimal Clifford torus is, up to ambient isometries,  the only index one minimal surface in $L(3,1)$ and $L(3,2)$.}
\end{theorem}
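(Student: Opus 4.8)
The strategy is to combine the Ros--Ritor\'e analysis of stable CMC surfaces under a $\mathbb{Z}_3$-quotient with the Marques--Neves width estimate for the lift. I would start from a compact immersed stable CMC surface $\Sigma \subset L(3,q)$ (with $q\in\{1,2\}$) and lift it to a $\mathbb{Z}_3$-invariant immersed CMC surface $\tilde\Sigma \subset \mathbb{S}^3$; by a standard first-eigenvalue/stability argument (as in Ros's proof of Willmore for surfaces invariant under the antipodal map, \cite{R}), the stability of $\Sigma$ combined with the existence of the finite group action forces a topological or Morse-index restriction on $\tilde\Sigma$. Concretely, if $\Sigma$ is not a sphere then $\tilde\Sigma$ has genus $\ge 1$, and one wants to conclude that $\tilde\Sigma$ must actually be minimal of Willmore energy $\le 2\pi^2$: here is where \cite{MN} enters, because Marques--Neves show any minimal surface in $\mathbb{S}^3$ of genus $\ge 1$ has area $\ge 2\pi^2$ with equality only for the Clifford torus, and more importantly their min-max/width machinery controls the area of $\mathbb{Z}_3$-invariant sweepouts. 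The first half of the statement then follows: a non-spherical stable CMC surface must descend from the Clifford torus, hence be a flat torus (a quotient of the Clifford torus by the $\mathbb{Z}_3$-action, which one checks is still embedded and flat for $q=1,2$).

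For the index-one minimal surface claim I would argue as follows. Let $\Sigma\subset L(3,q)$ be minimal of index one; lift to $\tilde\Sigma\subset\mathbb{S}^3$, which is then $\mathbb{Z}_3$-invariant and minimal. The index of $\tilde\Sigma$ is at least the index of $\Sigma$ computed on invariant variations, but one also gets a lower bound for the full index of $\tilde\Sigma$ from the group action: the three ``rotated'' copies of an index-one eigenfunction produce a $3$-dimensional (or at least $\ge 2$-dimensional) negative space, unless the eigenfunction is itself invariant, which the first eigenfunction of a connected surface cannot be unless it is constant—impossible for the stability operator on a minimal surface in $\mathbb{S}^3$ since $\int (|A|^2+\mathrm{Ric}(\nu))>0$. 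So $\tilde\Sigma$ has index $\ge 2$; combined with the Urbano theorem (index of a closed minimal surface in $\mathbb{S}^3$ is $1$ iff it is an equator, and $5$ for anything of genus $\ge 1$ except the Clifford torus which has index $5$ as well—more precisely Urbano: index $\le 5$ forces equator or Clifford torus), one narrows $\tilde\Sigma$ to the equator or the Clifford torus. The equator is $\mathbb{Z}_3$-invariant only for the rotation about its polar axis, and its quotient has index $1$ only if... — this needs to be checked and ruled out because the great sphere's quotient is a lens space geodesic sphere-like object whose index one would contradict it being minimal; in fact a minimal great $2$-sphere is not $\mathbb{Z}_3$-invariant under the lens-space action at all (the action is free, but a great sphere is not preserved), so $\tilde\Sigma$ must be the Clifford torus, and its $\mathbb{Z}_3$-quotient is the minimal flat torus in $L(3,q)$, which one verifies has index one by an explicit eigenvalue computation descending from the index-$5$ space on the Clifford torus (the invariant subspace is $1$-dimensional).

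The main obstacle I anticipate is the bookkeeping of how the Morse index and the nullity behave under the $\mathbb{Z}_3$-quotient, i.e. decomposing the space of Jacobi fields and negative directions of $\tilde\Sigma$ into isotypic components for the $\mathbb{Z}_3$-action and matching the invariant part with the index of $\Sigma$ downstairs. For the antipodal ($\mathbb{Z}_2$) case this representation theory is essentially trivial, but for $\mathbb{Z}_3$ one has a two-dimensional irreducible real representation to contend with, and one must be careful that contributions come in the right multiplicities — in particular confirming that the index-$5$ space of the Clifford torus splits as (invariant part of dimension $1$) $\oplus$ (two copies of the $2$-dimensional irrep) under the relevant rotation, and that $q=1$ versus $q=2$ does not change this (it changes which rotation one quotients by, hence potentially the splitting). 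A secondary technical point is checking that the descended Clifford torus is genuinely embedded and flat in $L(3,q)$ for both $q$, which amounts to verifying the $\mathbb{Z}_3$-action restricted to the Clifford torus is free and by flat isometries — a direct computation in the standard $S^1\times S^1$ coordinates on the Clifford torus. Once these representation-theoretic and embeddedness checks are in place, the rest is assembling \cite{RR}, \cite{R}, Urbano's index theorem, and \cite{MN} as black boxes.
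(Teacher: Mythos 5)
Your proposal is missing the quantitative step that makes the whole argument work, and as a result the first half of your plan proves the wrong statement. The paper's mechanism is: by Ros--Ritor\'e, a stable CMC immersion $\Sigma$ of genus $2$ or $3$ in a $3$-manifold with $Ric\geq 2$ satisfies $(1+H^2)|\Sigma|\leq 2\pi$; lifting to $\mathbb{S}^3$ multiplies the Willmore energy by $|G|=3$, so $\mathcal{W}(\widetilde{\Sigma})\leq 6\pi<2\pi^2$, and this contradicts the Willmore conjecture of Marques--Neves (which is used only as the black-box inequality $\mathcal{W}\geq 2\pi^2$ for closed surfaces of genus $\geq 1$ in $\mathbb{S}^3$ --- no equivariant sweepout or width estimate is involved). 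This kills genus $2$ and $3$; nothing more. The genus-one case is \emph{not} handled by an equality analysis in the Willmore inequality: Ros--Ritor\'e prove separately that stable CMC tori in space forms are flat, and the flat CMC tori in $L(3,q)$ are quotients of the tori $T_r$ for a whole range of $r$, almost all of them non-minimal. Your conclusion that a non-spherical stable CMC surface ``must actually be minimal'' and descend from \emph{the} (minimal) Clifford torus is therefore false and contradicts the theorem you are trying to prove, which asserts a family of flat tori as solutions. You also need to address possible disconnectedness of the lift; the paper does this by summing $\mathcal{W}$ over components, $2l\pi^2\leq |G|\,2\pi$.

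For the index-one claim, your route through Urbano's theorem has a gap you cannot close with the tools you list: Urbano gives a \emph{lower} bound (index $\geq 5$ for a non-totally-geodesic minimal surface other than the Clifford torus), so to narrow $\widetilde{\Sigma}$ down to the equator or Clifford torus you need an \emph{upper} bound, index$(\widetilde{\Sigma})\leq 5$, and this does not follow from index$(\Sigma)=1$. Only the $\mathbb{Z}_3$-invariant isotypic component of the negative eigenspace upstairs is controlled by the index downstairs; the non-invariant components (sections of the other representations, i.e.\ eigenfunctions that do not descend) are a priori unbounded in number, which is exactly the ``bookkeeping'' you flag as an obstacle --- it is not bookkeeping, it is the missing estimate. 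The paper avoids this entirely: the Hersch--Yau/Ros--Ritor\'e argument applies verbatim to index-one minimal surfaces (the test functions are balanced against the first eigenfunction instead of the constants), giving the same genus and area bounds downstairs; genus $2,3$ die by the Willmore argument above, genus $0$ is impossible since a totally umbilical minimal sphere would be totally geodesic and the free $\mathbb{Z}_3$ action admits no invariant great sphere, and a flat minimal torus in $\mathbb{S}^3$ is congruent to $T_{\pi/4}$, whose quotient is the minimal Clifford torus.
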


 The idea of the proof of Theorem \ref{theorem 1} is  as follows. 
 Stability implies that every isoperimetric surface is connected and its genus is  $0,1,2$ or $3$.
 If follows from a classical result of Hopf that if the genus is 0, then it is a geodesic sphere. From \cite{RR} we know that if the genus is 1, then it is  flat, and this forces the surface to be of revolution about a geodesic. We are left to rule out  other topological types. To do so we argue by contradiction. We assume that there exists a sequence of Lens spaces with increasing fundamental group containing  isoperimetric surfaces of  genus $2\, \text{or}\, 3$. After a suitable rescaling on the metrics we use compactness results to obtain a limit for the sequence of Lens spaces which will be a flat three manifold of rank one. In the same way, the sequence of isoperimetric surfaces will converge to a flat surface in the respective ambient manifold. The topology of the surfaces will force the limit to be an union of  planes. On the other hand, the minimization property rules this configuration out. 

The arguments  in the proof of Theorem \ref{theorem 1} generalize naturally for   the Berger Spheres $\mathbb{S}_{\varepsilon}^3$. This is a well known one parameter family of homogeneous metrics on the 3-sphere; the case $\varepsilon =1$ corresponds to the round metric. 
 All  the Hopf fibers in $\mathbb{S}_{\varepsilon}^3$ have the same length  $2\pi\,\varepsilon$. 
When $\varepsilon$ is not too small,   spheres are the only solutions of the isoperimetric problem, see \cite{TU}. When $\varepsilon$ is  very small,  it is pointed out in \cite{TU} that   some tori  are better candidates to solve the isoperimetric problem rather than spheres for certain  volumes.
\begin{theorem}
\textit{There exists $\varepsilon_0>0$ such that  for every $\varepsilon< \varepsilon_0$ the isoperimetric surfaces in the Berger spheres $\mathbb{S}_{\varepsilon}^3$ are either rotationally invariant spheres or tori. }
\end{theorem}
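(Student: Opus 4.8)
The plan is to follow the proof of Theorem~\ref{theorem 1} essentially verbatim, with the collapsing family $\mathbb{S}^3_{\varepsilon_k}$, $\varepsilon_k\to 0$, playing the role of the Lens spaces $L(p_k,q_k)$, $p_k\to\infty$: both families collapse along their Hopf fibres, all their sectional curvatures are $O(1)$, and after a blow-up they converge to flat three-manifolds of rank one. First I would recall from the standard existence and regularity theory that, for each $\varepsilon$, the isoperimetric surfaces of $\mathbb{S}^3_{\varepsilon}$ are smooth, embedded, connected, and stable with constant mean curvature. Since the Berger metric has positive scalar curvature bounded below uniformly in $\varepsilon\le 1$ (in fact $\mathrm{Scal}=8-2\varepsilon^2$), the same stability argument used for Theorem~\ref{theorem 1} bounds the genus of such a surface by a fixed integer. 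If the genus is $0$, the Hopf-type theorem of Abresch and Rosenberg for homogeneous three-manifolds with a four-dimensional isometry group forces the surface to be one of the rotationally invariant examples; if the genus is $1$ it is a torus. The whole problem therefore reduces to ruling out genus at least $2$ for all sufficiently small $\varepsilon$.

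Suppose for contradiction that there is a sequence $\varepsilon_k\to 0$ and isoperimetric surfaces $\Sigma_k\subset\mathbb{S}^3_{\varepsilon_k}$ of genus at least $2$. I would rescale the metrics $g_{\varepsilon_k}$ by a factor of the order of $\varepsilon_k^{-2}$ --- adjusted, if necessary, to the intrinsic scale of $\Sigma_k$, and recentred at a suitable point of $\Sigma_k$ --- so that the curvatures go to $0$ while a short closed geodesic of length $2\pi$ survives; the rescaled ambient manifolds then converge, in the pointed smooth topology, to a complete flat three-manifold of rank one, namely $\mathbb{R}^2\times S^1$ with a flat metric. Stability provides a scale-invariant bound $\int_{\Sigma_k}|A_k|^2\le C$; combined with the monotonicity formula this gives local area bounds, and the usual compactness for stable surfaces of bounded total curvature extracts a subsequence of the rescaled $\Sigma_k$ converging --- smoothly away from finitely many points --- to a complete surface $\Sigma_\infty\subset\mathbb{R}^2\times S^1$ which is stable and, since the mean curvature has been scaled away, minimal. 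A complete stable minimal surface in a flat three-manifold of rank one is totally geodesic, hence flat, so a plane or a flat cylinder; as no closed surface of genus at least $2$ carries a flat metric, the topology of the $\Sigma_k$ forces the convergence to degenerate and $\Sigma_\infty$ to be a union of parallel totally geodesic planes.

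It then remains to contradict minimisation. If the blow-up limit is a union of parallel planes, then for large $k$ the surface $\Sigma_k$ contains an arbitrarily large, almost-flat, slab-like piece, so the region it bounds is, at the relevant scale, essentially a product; comparing its perimeter with that of an explicit better competitor of the same volume --- a geodesic tube about a Hopf fibre, i.e.\ a torus of revolution, or a small geodesic ball, whose perimeter in the collapsed regime is of strictly smaller order --- contradicts the fact that $\Sigma_k$ is isoperimetric. Hence genus at least $2$ cannot occur once $\varepsilon<\varepsilon_0$, and the theorem follows.

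The hard part will be the blow-up step. The rescaling has to be governed by the intrinsic scale of $\Sigma_k$, not by $\varepsilon_k$ alone, because the enclosed volume --- and with it the mean curvature --- may degenerate at a rate relative to $\varepsilon_k$ that is not known a priori; and one must keep control of the topology along the smooth subconvergence, so that genus is neither created nor lost except through the expected neck-pinching, in order to be certain the flat limit really is a union of planes. Making the final area comparison quantitative is a further, more routine, matter.
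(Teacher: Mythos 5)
Your overall architecture coincides with the paper's: rescale by $\varepsilon^{-2}$ so the Hopf fibres have length $2\pi$, obtain Cheeger--Gromov convergence to the flat $\mathbb{S}^1\times\mathbb{R}^2$, show the limit of the genus $\ge 2$ isoperimetric surfaces is a union of parallel totally geodesic planes (the paper also uses the Poincar\'e--Hopf choice of base points, which you omit but which is what forces the limit planes to be orthogonal to the fibres rather than, say, totally geodesic cylinders), and then contradict minimality. However, at the two decisive steps your substitutes are genuinely weaker than what the argument needs. First, the compactness: you propose a scale-invariant $\int|A|^2$ bound and convergence ``smoothly away from finitely many points,'' and you acknowledge the rescaling might have to follow the intrinsic scale of $\Sigma_k$. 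The paper instead proves a uniform \emph{pointwise} bound $|A_{\Sigma_\varepsilon}|\le C$ at the fixed scale $\varepsilon^{-1}$ (Lemma \ref{bound  second fundmental form lemma}), by a second blow-up at the curvature scale whose limit would be a complete stable CMC surface in $\mathbb{R}^3$, excluded by Alexandrov and Da Silveira (Theorem \ref{silveira}). This pointwise bound is not optional: the multiplicity-one graphical convergence of Corollary \ref{compactness iso} (itself using the isoperimetric property to kill higher multiplicity) is what preserves the topology and what lets one transplant a competitor back to $\Sigma_\varepsilon$. Bubbling convergence with concentration points would give you neither, so your plan has a real gap here that the paper's Lemma closes.

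Second, the final contradiction. Your proposed competitor --- ``a geodesic tube about a Hopf fibre of the same volume, whose perimeter is of strictly smaller order'' --- is a global comparison that is not substantiated and is doubtful as stated: once the blow-up limit is a union of planes you control $\Sigma_\varepsilon$ only locally near the base point, and you have no a priori handle on $|\Sigma_\varepsilon|$ versus the enclosed volume that would let an order-of-magnitude comparison with Hopf tubes go through. The paper's argument is local and explicit: in the slab model of $\mathbb{S}^1\times\mathbb{R}^2$ it removes the $k$ solid cylinders of large radius $R$ cut out of $\Omega_\infty$ by a vertical solid torus and restores the volume with a thin vertical solid torus of radius $r=R\sqrt{\sum_i a_i/2\pi}$; the area change is $-2k\pi R^2+2\pi R\sum_i a_i+4\pi^2 r<0$ for $R$ large. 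This compactly supported, volume-preserving, area-decreasing deformation is then carried over to $\Sigma_\varepsilon$ precisely because of the multiplicity-one smooth convergence, contradicting the isoperimetric property. You should replace your global comparison with this (or an equivalent) explicit local construction; as written, that step would not close.

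A minor point: the limit surface is not minimal ``because the mean curvature has been scaled away'' --- the rescaled mean curvature is $\varepsilon H_\varepsilon$ and one has no a priori bound on $H_\varepsilon$; rather, the limit is a stable CMC surface of infinite area in a flat manifold, and Theorem \ref{silveira} forces $|A|\equiv 0$, hence minimality a posteriori. Likewise the genus bound comes from positivity of the Ricci curvature of $g_\varepsilon$ (which holds for all $\varepsilon\le 1$, with constant degenerating as $\varepsilon\to 0$) via Theorem \ref{up bound w energy, g=2}, not from a scalar curvature bound.
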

\begin{remark} 
 It follows from the work in \cite{MHH} and \cite{PR}  that the minimal Clifford torus in $L(p,1)$  (resp. $\mathbb{S}_{\varepsilon}^3$) is isoperimetric   for every $p\geq 3$  (resp. $\varepsilon\leq \frac{1}{3}$), see Section 3.
\end{remark}  
 
\noindent\textit{Acknowledgements.}
I am grateful to my  advisor  Andr\'{e} Neves for his constant encouragement and support during the course of this work. I also thank Jason Lotay  for many helpful comments on an earlier version of this manuscript.

\section{Preliminaries}
Let $(M^{n+1},g)$ be an orientable  Riemannian manifold of dimension $n+1$.  The $n+1$-dimensional Hausdorff measure of  a region $\Omega\subset M$  is denoted by $|\Omega|$. Similarly, we denote the $n$-dimensional Hausdorff measure of the hypersurface $\partial \Omega \subset M$ by $|\partial\Omega|$.
The class of  regions considered here are  those of finite perimeter, see \cite{S}.
\subsection{Isoperimetric}
A region $\Omega\subset M$ is called an \textit{Isoperimetric region} if
\[|\partial \Omega|=\inf \{|\partial \Omega^{\prime}|\,:\,\Omega^{\prime} \subset M \quad \textit{and} \quad |\Omega^{\prime}|=|\Omega|\}.\]
In this case, the hypersurface $\Sigma=\partial \Omega$ is called an \textit{Isoperimetric hypersurface}.

In general, the existence of isoperimetric surfaces is handled by a compactness theorem from geometric measure theory. For non-compact manifolds one needs to be careful since a minimizing sequence of regions of fixed volume may drift off to infinity.
We recommend \cite{M} for a recent reference on the regularity of isoperimetric hypersurfaces:

\begin{theorem}
\textit{Let $(M^{n+1},g)$ be a  closed Riemannian manifold. For every $0<t< \text{vol}(M)$ there exists an isoperimetric region $\Omega$ satisfying $|\Omega|=t$. Moreover, $\Sigma=\partial \Omega$ is smooth up to a closed set of Hausdorff dimension $n-7$.}
\end{theorem}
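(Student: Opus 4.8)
The statement combines a standard existence result (the direct method of the calculus of variations) with the classical interior regularity theory for area-almost-minimizing boundaries, so a proof is really an assembly of known ingredients; I describe the assembly below, in the framework of sets of finite perimeter used throughout the paper.

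\emph{Existence.} Fix $0<t<\text{vol}(M)$ and set $I(t)=\inf\{|\partial\Omega'| : \Omega'\subset M \text{ has finite perimeter and } |\Omega'|=t\}$. Small geodesic balls realize every volume in $(0,\text{vol}(M))$, so $I(t)<\infty$. Choose a minimizing sequence $\Omega_k$ with $|\Omega_k|=t$ and $|\partial\Omega_k|\to I(t)$. The characteristic functions $\chi_{\Omega_k}$ are then uniformly bounded in $BV(M)$, and since $M$ is compact the embedding $BV(M)\hookrightarrow L^1(M)$ is compact; passing to a subsequence, $\chi_{\Omega_k}\to\chi_\Omega$ in $L^1(M)$ for some finite-perimeter set $\Omega$. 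Compactness of $M$ also rules out escape of mass, so $|\Omega|=\lim|\Omega_k|=t$, and lower semicontinuity of the perimeter under $L^1$-convergence gives $|\partial\Omega|\le\liminf|\partial\Omega_k|=I(t)$. Hence $\Omega$ is an isoperimetric region of volume $t$.

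\emph{Regularity.} The key reduction is that a volume-constrained perimeter minimizer is an almost minimizer of perimeter, in the sense of Almgren and of Gonzalez--Massari--Tamanini: there exist $\Lambda>0$ and $r_0>0$ such that $|\partial\Omega|\le|\partial\Omega'|+\Lambda\,|\Omega\triangle\Omega'|$ whenever $\Omega\triangle\Omega'\subset\subset B_r(x)$ with $r<r_0$. This follows from the usual volume-fixing trick: a perturbation supported in $B_r(x)$ changes the enclosed volume by a small amount, and that amount can be restored by a second perturbation supported in a fixed reference ball disjoint from $B_r(x)$, at a perimeter cost that is linear in the volume change; the isoperimetric property of $\Omega$ then yields the displayed inequality. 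Granting this, De Giorgi's theorem shows the reduced boundary $\partial^*\Omega$ is a $C^{1,\alpha}$ hypersurface; the first variation, with a Lagrange multiplier for the volume constraint, shows its mean curvature is constant, and elliptic bootstrapping upgrades $\partial^*\Omega$ to $C^\infty$. Finally, Federer's dimension-reduction argument, together with the monotonicity formula and the nonexistence of singular area-minimizing hypercones in low dimensions (Simons, Bombieri--De Giorgi--Giusti), shows the singular set $\Sigma\setminus\partial^*\Omega$ is closed of Hausdorff dimension at most $n-7$, and empty when $n\le 6$. A detailed account is in \cite{M}.

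\emph{Where the work is.} Everything except the almost-minimality reduction is either soft ($BV$ compactness, lower semicontinuity, no loss of mass since $M$ is closed) or a direct appeal to the regularity literature; the single point requiring an argument tailored to this problem is converting the global volume constraint into a local perimeter penalization, i.e.\ verifying the $(\Lambda,r_0)$-minimality of $\Omega$. I expect that to be the main, and essentially only, nontrivial step. I would also note that the exponent $n-7$ is sharp: the Simons cone over $S^3\times S^3\subset\mathbb{R}^8$ is an area-minimizing boundary with an isolated singularity, so in ambient dimension $n+1=8$ the singular set can be nonempty.
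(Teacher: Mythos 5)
Your outline is correct and is exactly the standard argument (direct method in $BV$ plus reduction to $(\Lambda,r_0)$-almost-minimality and the De Giorgi--Federer--Simons regularity theory) that the paper invokes wholesale by citing Morgan's survey \cite{M}; the paper itself gives no proof of this background theorem. You have correctly isolated the volume-fixing trick as the one step specific to the constrained problem, and your remark on sharpness via the Simons cone is accurate.
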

\subsection{Stability}
Isoperimetric hypersurfaces are stable critical points of the area functional for variations that preserve the enclosed volume; thus, the regular part of isoperimetric hypersurfaces has constant mean curvature.  
More generally, we say an isometric immersion $\phi: \Sigma^n \rightarrow M^{n+1}$ has \textit{constant mean curvature} (CMC) if it is a critical point of the area functional for volume preserving variations. Critical points are called \textit{stable CMC} if the second derivative of the area  is non-negative for such variations. 

Equivalently,  $\phi$ is stable if  for every $f \in C^{\infty}(\Sigma)$ with compact support such that $\int_{\Sigma} f\,dvol_{\Sigma}=0$,   we have
\begin{equation}\label{stability inequality}
I(f,f)=-\int_{\Sigma}fL\,f\,d_{\Sigma} :=\int_{\Sigma} |\nabla f|^2 - (Ric(N,N)+ |A|^2)\,f^2\, d_{\Sigma}\geq 0.
\end{equation}
$N$ is the unit normal vector of $\Sigma$ and $A$ is the \textit{second fundamental form} of the immersion $\phi$. The \textit{mean curvature} of  $\Sigma$, denoted by $H$, is defined by $2\,H=\textit{trace}(A)$.

The  study of immersed stable CMC hypersurfaces started in \cite{BC} and \cite{BCE} with a new characterization of the geodesic spheres in the  simply connected space forms $\mathbb{R}^{n+1}$, $\mathbb{S}^{n+1}$ and  $\mathbb{H}^{n+1}$. 
The classification  of stable CMC surfaces   is often a way to
approach the isoperimetric problem in  reasonable spaces. 
With this purpose in mind, A. Ros and M. Ritor\'{e} in \cite{RR} used the Hersch-Yau trick   to study orientable stable CMC surfaces on three manifolds of positive Ricci curvature.

\begin{theorem}[Ros-Ritor\'{e} \cite{RR}]\label{up bound w energy, g=2}
\textit{Let $(M,g)$ be a closed three manifold with positive Ricci curvature. If  $\phi: \Sigma \rightarrow M^3$  is a stable CMC immersion,   then $g(\Sigma)\leq 3$. Moreover, if $g(\Sigma)=2$ or $3$, then
\[\big(\frac{1}{2}\inf_{\Sigma}Ric_M+ H^2\big)|\Sigma|\leq 2\pi.\] }
\end{theorem}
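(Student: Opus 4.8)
The plan is to prove the Ros--Ritor\'e estimate by feeding a cleverly chosen test function into the stability inequality \eqref{stability inequality}. The key input is the classical Hersch--Yau trick: given the immersion $\phi:\Sigma\to M^3$ with $M$ of positive Ricci curvature, one first composes $\phi$ with a conformal branched minimal immersion of $\Sigma$ into $\mathbb S^2$ (which exists for $g(\Sigma)\le 2$ by Riemann--Roch type arguments, and for any genus by a degree-one map argument when one only needs the inequality), and then post-composes with a conformal automorphism of $\mathbb S^2$ chosen so that the $\mathbb R^3$-valued coordinate functions $f_1,f_2,f_3$ of the resulting map satisfy $\int_\Sigma f_i\,d_\Sigma=0$ for $i=1,2,3$. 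Each $f_i$ is then an admissible test function in \eqref{stability inequality}, so summing the three inequalities $I(f_i,f_i)\ge 0$ gives
\[
\int_\Sigma \sum_i |\nabla f_i|^2 \;\ge\; \int_\Sigma \big(\mathrm{Ric}(N,N)+|A|^2\big)\sum_i f_i^2
\;=\;\int_\Sigma \mathrm{Ric}(N,N)+|A|^2
\]
since $\sum_i f_i^2\equiv 1$ on $\mathbb S^2$.

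Next I would estimate the two sides separately. For the right-hand side, $\mathrm{Ric}(N,N)\ge \inf_\Sigma \mathrm{Ric}_M$ pointwise, and $|A|^2 = 2H^2 + (\text{trace-free part})^2 \ge 2H^2$; but I actually want a sharper bound, so I would instead use $|A|^2 \ge \tfrac12(\mathrm{trace}\,A)^2 = 2H^2$ directly — wait, more carefully, $|A|^2\ge \tfrac{(\mathrm{tr}A)^2}{2}=2H^2$ in dimension two, giving $\int_\Sigma \mathrm{Ric}(N,N)+|A|^2 \ge (\inf_\Sigma\mathrm{Ric}_M + 2H^2)|\Sigma|$. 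Hmm, comparing with the target $(\tfrac12\inf_\Sigma\mathrm{Ric}_M+H^2)|\Sigma|\le 2\pi$, the factor of $2$ suggests that the correct route uses the Gauss equation to rewrite $\mathrm{Ric}(N,N)+|A|^2$ in terms of the intrinsic Gauss curvature $K$ and the scalar/Ricci curvature of $M$, so that the Gauss--Bonnet theorem enters. Indeed, $\mathrm{Ric}_M(N,N)+|A|^2 = \tfrac12\mathrm{Scal}_M - K + 2H^2 + |\mathring A|^2$ by the Gauss equation (with $\mathring A$ the trace-free second fundamental form and $|A|^2=2H^2+|\mathring A|^2$). Discarding $|\mathring A|^2\ge 0$ and integrating, Gauss--Bonnet gives $\int_\Sigma K = 2\pi\chi(\Sigma)=2\pi(2-2g)\le -2\pi$ when $g\ge 2$, so $-\int_\Sigma K\ge 2\pi$, hence the right side is at least $(\tfrac12\inf\mathrm{Scal}_M+2H^2)|\Sigma|+2\pi$. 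I'd need to reconcile $\mathrm{Scal}_M$ versus $\mathrm{Ric}_M$; on a three-manifold one has $\mathrm{Scal}_M\ge 2\inf\mathrm{Ric}_M$ only under extra hypotheses, but the statement is phrased with $\inf\mathrm{Ric}_M$, so I would just use $\tfrac12\mathrm{Scal}_M + \mathrm{Ric}_M(N,N)$ recombined as $\ge \tfrac32\inf\mathrm{Ric}_M$ — actually the cleanest is to not invoke Gauss--Bonnet for the $\mathrm{Ric}$ term at all and instead handle the left-hand side with it.

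For the left-hand side, the point is that $\sum_i|\nabla f_i|^2 = 2\,e(\Phi)$ is (twice) the energy density of the harmonic/conformal map $\Phi=(f_1,f_2,f_3):\Sigma\to\mathbb S^2\subset\mathbb R^3$, and since $\Phi$ is conformal (being a composition of a conformal branched minimal immersion with a Möbius map) its energy equals the area of its image counted with multiplicity: $\int_\Sigma \sum_i|\nabla f_i|^2 = 2\,\mathrm{Area}(\Phi) \le 2\cdot(\text{degree})\cdot 4\pi$. For $g(\Sigma)\le 2$ the branched minimal immersion can be taken of degree $1$ (this is the delicate Riemann--Roch / Meeks--Yau style input, and is exactly where the genus bound $g\le 3$ degrades to $g\le 2,3$ in the quantitative part), giving $\int_\Sigma\sum_i|\nabla f_i|^2\le 8\pi$. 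Wait — I want $\le 2\pi$ on the other side after dividing, so the accounting must be: $8\pi \ge \int_\Sigma \mathrm{Ric}(N,N)+|A|^2 \ge (\inf\mathrm{Ric}_M + 2H^2)|\Sigma| + (-\int_\Sigma K) \cdot(\text{correction})$; feeding in $-\int_\Sigma K\ge 2\pi$ via Gauss--Bonnet and rearranging yields $(\tfrac12\inf\mathrm{Ric}_M+H^2)|\Sigma|\le \tfrac{8\pi - 2\cdot 2\pi}{2}=2\pi$. So the bookkeeping closes, provided one is careful that the genus-$2$ (or $3$) surface admits a degree-one conformal branched cover of $\mathbb S^2$.

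The main obstacle, and the step requiring the most care, is precisely the construction of the conformal map $\Phi$ of the right degree together with the balancing condition $\int f_i=0$: for $g(\Sigma)=2$ one uses the hyperelliptic structure (every genus-$2$ surface is hyperelliptic, giving a degree-two map to $\mathbb S^2$, but one needs the refined degree-one branched minimal immersion coming from a line bundle of degree $\le$ something, via Riemann--Roch $h^0(L)\ge \deg L - g + 1$), and for $g(\Sigma)=3$ the existence of such a low-degree map is more subtle and is exactly why the genus is capped at $3$. The balancing step is a standard topological argument (the center-of-mass map extends continuously to the closed ball and a degree/fixed-point argument forces a conformal reparametrization making the barycenter the origin), so that is routine; the energy-equals-area identity for conformal maps and the Gauss--Bonnet bound are routine; the genuine work is the algebraic-geometry input guaranteeing the test map exists with controlled energy. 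I would therefore structure the proof as: (1) recall/cite the Hersch--Yau balancing; (2) establish existence of the conformal branched minimal immersion $\Phi$ to $\mathbb S^2$ with $\mathrm{Area}(\Phi)\le 4\pi$ when $g\le 3$ — citing Ros--Ritor\'e and the underlying Riemann surface theory; (3) plug the coordinates into \eqref{stability inequality}, sum, use $\sum f_i^2=1$; (4) apply the Gauss equation and Gauss--Bonnet to extract the stated inequality.
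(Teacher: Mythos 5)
Your overall strategy is the right one and is exactly the route of Ritor\'e--Ros (the paper itself only cites \cite{RR} for this theorem): Hersch--Yau balancing of the coordinates of a conformal map $\Phi:\Sigma\to\mathbb{S}^2$, the identity ``energy $=$ area'' for conformal maps, the Gauss equation and Gauss--Bonnet. But as written the argument has two genuine errors that prevent it from closing. First, a degree-one holomorphic branched cover $\Sigma\to\mathbb{S}^2$ is a biholomorphism, so it exists only for $g=0$; for $g=2$ the minimal degree is $2$ (hyperellipticity) and for $g=3$ it is $2$ or $3$. The correct input is the Brill--Noether gonality bound $d\le\lfloor (g+3)/2\rfloor$, and the stability estimate then reads $\int_\Sigma\big(\mathrm{Ric}(N,N)+|A|^2\big)\le 2E(\Phi)=8\pi d$, not $8\pi$. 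Second, your Gauss-equation bookkeeping is off and, more importantly, the route through $\tfrac12\mathrm{Scal}_M$ cannot give the sharp constant: the correct identity is
\[
\mathrm{Ric}(N,N)+|A|^2=\mathrm{Ric}(e_1,e_1)+\mathrm{Ric}(e_2,e_2)+4H^2-2K\;\ge\;2\inf_\Sigma \mathrm{Ric}_M+4H^2-2K,
\]
obtained by eliminating the ambient sectional curvature of $T\Sigma$ against the two \emph{tangential} Ricci traces (your version $\tfrac12\mathrm{Scal}_M-K+2H^2+|\mathring A|^2$ has the wrong $H^2$ coefficient, and estimating $\tfrac12\mathrm{Scal}_M\ge\tfrac32\inf\mathrm{Ric}_M$ only yields $(\tfrac12\inf\mathrm{Ric}_M+H^2)|\Sigma|\le 4\pi$). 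Integrating the correct identity and using $-2\int_\Sigma K=8\pi(g-1)$ against the stability bound $8\pi d$ gives
\[
\Big(\tfrac12\inf_\Sigma \mathrm{Ric}_M+H^2\Big)|\Sigma|\;\le\;2\pi\,(d-g+1).
\]
Your final arithmetic ``$(8\pi-4\pi)/2=2\pi$'' only appears to work because the underestimate of the degree compensates the underestimate of $-\int_\Sigma K$.

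With the inequality displayed above, both conclusions of the theorem fall out at once, and this is the piece your proposal leaves as a vague assertion: for $g\ge 4$ one has $\lfloor(g+3)/2\rfloor-g+1\le 0$, so positivity of the Ricci curvature forces $g\le 3$; and for $g=2$ or $3$ one has $d-g+1\le 1$, giving exactly $(\tfrac12\inf_\Sigma\mathrm{Ric}_M+H^2)|\Sigma|\le 2\pi$. So the missing ingredients are the gonality bound (not a degree-one map) and the precise curvature identity; once those are in place the genus bound is not ``more subtle''--- it is the same inequality read for $g\ge 4$.
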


\begin{proposition}\label{wilmore energy genus 2}
\textit{Let $\phi: \Sigma \rightarrow M^3$  be a stable immersion  with  constant mean curvature $H$ into an elliptic space form $M=\mathbb{S}^3/G$. Then
\begin{enumerate}
\item If $g(\Sigma)=2$ or $3$, then $\big(1+H^2\big)|\Sigma|\leq 2\pi$.
\item  If $g(\Sigma)=2$ or $3$ and $|G|\leq 4 $, then $\phi$ is an embedding. Moreover,  if   $|G|\leq 6$, then the pullback of $\Sigma$, through the covering map $\Pi: \mathbb{S}^3\rightarrow M^3$, is connected.
\item {If $|G|=2$ or $3$, then $g(\Sigma)=\,0$ or $1$.}
\end{enumerate}}
\end{proposition}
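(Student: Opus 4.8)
The plan is to derive everything from three inputs: Theorem~\ref{up bound w energy, g=2} of Ros--Ritor\'{e}, the classical Li--Yau multiplicity inequality for closed surfaces immersed in $\mathbb{S}^3$, and the solution of the Willmore conjecture by Marques--Neves \cite{MN}; the last two combine into the statement that every immersed closed orientable surface $F$ in $\mathbb{S}^3$ of genus $\ge 1$ satisfies $\int_F(1+H_F^2)\,dA\ge 2\pi^2$ (for embedded $F$ this is Marques--Neves, and a non-embedded immersion has a double point, so Li--Yau gives $\ge 8\pi$). Write $\Pi\colon \mathbb{S}^3\to M=\mathbb{S}^3/G$ for the universal covering. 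Since $M$ has constant sectional curvature $1$, $\mathrm{Ric}_M\equiv 2g$, so $\tfrac12\inf_\Sigma\mathrm{Ric}_M=1$, and part (1) is immediate from Theorem~\ref{up bound w energy, g=2}: if $g(\Sigma)\in\{2,3\}$ then $(1+H^2)|\Sigma|\le 2\pi$.

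For (2) and (3) I would work with the fibre product $\hat\Sigma=\{(x,\tilde p)\in\Sigma\times\mathbb{S}^3:\phi(x)=\Pi(\tilde p)\}$. The first projection is a $|G|$-sheeted unramified covering $\pi\colon\hat\Sigma\to\Sigma$, and the second projection is an immersion $\tilde\phi\colon\hat\Sigma\to\mathbb{S}^3$ with $\Pi\circ\tilde\phi=\phi\circ\pi$; in the pulled-back metric $|\hat\Sigma|=|G|\,|\Sigma|$ and $\tilde\phi$ is again CMC with the same $H$, so for the spherical Willmore energy $\cW$,
\[
\cW(\tilde\phi)=|G|\,(1+H^2)\,|\Sigma|\le 2\pi\,|G|.
\]
Two elementary remarks: (a) $\phi$ is an embedding if and only if $\tilde\phi$ has no point of multiplicity $\ge 2$ --- a value $\phi(x)=\phi(y)$ with $x\ne y$ produces, over each of the $|G|$ points of $\Pi^{-1}(\phi(x))$, a double point of $\tilde\phi$, and conversely a double point of $\tilde\phi$ projects to one of $\phi$; (b) since $\pi$ is unramified and $g(\Sigma)\in\{2,3\}$, Riemann--Hurwitz shows every connected component $\hat\Sigma_i$ of $\hat\Sigma$ is a closed orientable surface of genus $\ge 2$, and $\cW(\tilde\phi)=\sum_i\cW(\hat\Sigma_i)$.

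For part (2): if $|G|\le 3$ then $\cW(\tilde\phi)\le 6\pi<8\pi$, so Li--Yau forbids a point of multiplicity $\ge 2$ and by (a) $\phi$ is an embedding. If $|G|=4$ then $\cW(\tilde\phi)\le 8\pi$; were $\phi$ not an embedding, $\tilde\phi$ would have a point of multiplicity exactly $2$ (multiplicity $\ge 3$ would force $\cW\ge 12\pi$) and $\cW(\tilde\phi)=8\pi$, and one excludes this using the equality case of the Li--Yau inequality, whose rigidity conclusion is incompatible with the genus bound in (b). This last point is the one genuine obstacle: in the other cases the Willmore-energy estimate is strict with room to spare, whereas for $|G|=4$ one sits exactly at the Li--Yau threshold $8\pi$ and must invoke its rigidity statement. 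For the connectedness claim, suppose $|G|\le 6$: by (b) each $\cW(\hat\Sigma_i)\ge 2\pi^2$, so if $\hat\Sigma$ has $d$ components then $2\pi^2 d\le\cW(\tilde\phi)\le 2\pi|G|\le 12\pi$, forcing $d\le 6/\pi<2$, i.e.\ $\hat\Sigma$ is connected.

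For part (3): let $|G|\in\{2,3\}$ and suppose toward a contradiction that $g(\Sigma)=2$ or $3$ (that $g(\Sigma)\le 3$ is part of Theorem~\ref{up bound w energy, g=2}). By (b) every component of $\hat\Sigma$ has genus $\ge 2\ge 1$, hence $\cW(\tilde\phi)=\sum_i\cW(\hat\Sigma_i)\ge 2\pi^2$; but $\cW(\tilde\phi)\le 2\pi|G|\le 6\pi<2\pi^2$ (since $3<\pi$), a contradiction. Therefore $g(\Sigma)\in\{0,1\}$. This step uses the full strength of the Willmore conjecture, consistent with the application to $L(3,1)$ and $L(3,2)$; note it does not actually require (a) or the connectedness of $\hat\Sigma$, only the per-component energy bound.
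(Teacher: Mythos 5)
Your argument is correct and follows essentially the same route as the paper: part (1) from the Ros--Ritor\'e theorem, then a lift to $\mathbb{S}^3$ (you via the fibre product, the paper via the covering associated to $\ker\phi_*$ --- the components of your $\hat\Sigma$ are precisely the paper's $\widetilde\Sigma$), with Li--Yau giving embeddedness and Marques--Neves giving connectedness and part (3). The one delicate point you flag, the borderline case $\mathcal{W}=8\pi$ when $|G|=4$, is treated by the paper in the same spirit: it asserts strict inequality and defers the equality case of Li--Yau to the discussion in Ritor\'e--Ros.
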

\begin{proof}
The first statement follows from the previous theorem since $Ric_M=2$. Let $\phi_{*}: \pi_1(\Sigma)\rightarrow \pi_1(M)$ be the induced map in fundamental groups. As  $K=Ker(\phi_{*})$ has finite index there exists a
finite covering $\psi:\widetilde{\Sigma}\rightarrow \Sigma$ such that $Im\big(\psi_{*}\big)=K$ and
$\big(\phi\circ\psi\big)_{*}=0$. This means there exists a lifting of this map into $\mathbb{S}^3$ and we denote it by
$\widetilde{\phi}:\widetilde{\Sigma}\rightarrow \mathbb{S}^3$. It follows that
$\big(1+H^2\big)|\widetilde{\Sigma}|\leq |G|\,2\pi$. If $\phi$ is not an embedding,   then $\widetilde{\phi}$ is  not embedding either. By the work of Li and Yau \cite{LY} the
Willmore energy of the  immersed surface $\widetilde{\Sigma}$, i.e. $\mathcal{W}(\widetilde{\Sigma})=\int_{\widetilde{\Sigma}}\big(1+H^2\big)dvol_{\Sigma}$,  is strictly greater\footnote{The case $\mathcal{W}(\Sigma)=8\pi$ is discussed in \cite{RR}.} than $8\pi$. Therefore, if $|G|\leq 4$, we obtain a contradiction and $\widetilde{\phi}$ is an embedding.  Moreover, for closed surfaces with genus  greater than or equal to $1$ in
$\mathbb{S}^3$ the Willmore conjecture, recently proved in \cite{MN}, states that $\mathcal{W}(\Sigma)\geq 2\pi^2$. Let $\cup_{i=1}^l\widetilde{\Sigma}_i$  be the pre-image of $\Sigma$  by the universal covering map, then
\[2\,l\,\pi^2\leq \sum_{i=1}^l\mathcal{W}\big(\widetilde{\Sigma}_i\big)= |G| \mathcal{W}(\Sigma)\leq |G|2\pi \Rightarrow \frac{|G|}{l}\geq \pi\]
Therefore, if $|G|\leq 6$,  then $l=1$ and $|G|\geq \pi$. In particular, if $|G|=2$ or $3$, then there exist no stable CMC surface $\Sigma$ with $g(\Sigma)\geq 2$. 
\end{proof}
\begin{Def}
For each $r\in(0,\frac{\pi}{2})$ we define the Clifford Torus $T_r$ as:
\begin{eqnarray}\label{clifford torus}
T_r= \mathbb{S}^1(\cos(r))\times \mathbb{S}^1(\sin(r))\subset \mathbb{S}^3.
\end{eqnarray}
\end{Def}
 
Every flat tori with constant mean curvature in $\mathbb{S}^3$ is congruent to a Clifford torus  $T_r$. This follows from the Rigidity theorem, pg. 49 in \cite{C}, for they have the same second fundamental form.

\begin{corollary}
\textit{The  stable CMC surfaces in $L(3,1)$ and $L(3,2)$ are totally umbilical spheres or flat tori. In addition, the index one minimal surfaces in $L(3,1)$ and $L(3,2)$ are congruent to the the projection of  minimal Clifford torus.}
\end{corollary}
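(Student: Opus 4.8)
The plan is to deduce the Corollary directly from Proposition \ref{wilmore energy genus 2} together with the classical facts recalled just before its statement. First I would observe that $L(3,1)$ and $L(3,2)$ are both of the form $\mathbb{S}^3/G$ with $|G|=3$, so part (3) of Proposition \ref{wilmore energy genus 2} applies verbatim: any stable CMC immersion $\phi:\Sigma\to M^3$ has $g(\Sigma)=0$ or $g(\Sigma)=1$. This already pins down the topology of $\Sigma$, and it remains only to identify the geometry in each of the two cases.

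In the case $g(\Sigma)=0$, a stable (hence CMC) immersion of a sphere into a space form has, by the classical theorem of Hopf (the second fundamental form of a CMC immersion of $\mathbb{S}^2$ has a holomorphic Hopf differential, which must vanish), all points umbilic; so $\Sigma$ is a totally umbilical sphere, i.e. a geodesic sphere (recall $M$ has constant curvature). In the case $g(\Sigma)=1$, I would invoke the Ros–Ritoré analysis referenced in the paragraph after Theorem \ref{up bound w energy, g=2}: a genus-one stable CMC surface in a closed three-manifold of positive Ricci curvature is flat. A flat CMC torus in $M=\mathbb{S}^3/G$ lifts (after passing to the finite cover of $\Sigma$ on which $(\phi\circ\psi)_*=0$, exactly as in the proof of Proposition \ref{wilmore energy genus 2}) to a flat CMC torus in $\mathbb{S}^3$, which by the Rigidity theorem cited after the definition of $T_r$ is congruent to a Clifford torus $T_r$; projecting back, $\Sigma$ is the image of a Clifford torus. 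This establishes the first sentence.

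For the second sentence I would specialize to the minimal case ($H=0$) and index one. An index one minimal surface cannot be a sphere: a totally geodesic or totally umbilical minimal sphere in a space form of positive curvature is a totally geodesic $\mathbb{S}^2$, whose index is $1$ in $\mathbb{S}^3$ but — and this is the point to check — the relevant equatorial $\mathbb{S}^2$ does not embed in $L(3,p)$ since $\mathbb{Z}_3$ acts freely, so no minimal sphere occurs at all; hence $g(\Sigma)=1$ and by the previous paragraph $\Sigma$ is the projection of a minimal Clifford torus $T_{\pi/4}$. Finally one checks that this projected Clifford torus indeed has index one: its lift $T_{\pi/4}\subset\mathbb{S}^3$ has Morse index $5$ and nullity $4$ with the low eigenfunctions transforming under the $\mathrm{SO}(4)$-action in a way that, after taking $G$-invariant (equivalently $\mathbb{Z}_3$-equivariant) parts, leaves exactly a one-dimensional negative space; I would cite the known spectral computation for the Clifford torus and track the $G$-action on the eigenspaces.

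The main obstacle is the last step: verifying that the projected minimal Clifford torus has index exactly one requires knowing the first eigenvalues of the stability (Jacobi) operator of $T_{\pi/4}$ and how the deck group $\mathbb{Z}_3\subset\mathrm{SO}(4)$ acts on the corresponding eigenspaces, so that one can count the dimension of the $G$-invariant negative subspace; the topological input (that no minimal $\mathbb{S}^2$ and no higher-genus stable surface exist) is comparatively routine given Proposition \ref{wilmore energy genus 2}(3) and Hopf's theorem.
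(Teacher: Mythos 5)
For the first sentence your route is exactly the paper's: Proposition \ref{wilmore energy genus 2}(3) forces $g(\Sigma)\in\{0,1\}$, the Hopf differential gives total umbilicity in genus zero, and \cite{RR} gives flatness in genus one, with the identification of a flat CMC torus as a Clifford torus coming from the rigidity statement quoted just before the corollary. That part is correct and essentially verbatim the paper's (very short) proof.

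The gap is in your treatment of the second sentence. An index one minimal surface is not in general a stable CMC surface: weak (volume-preserving) stability implies index at most one, but not conversely, since a mean-zero test function need not be $L^2$-orthogonal to the first eigenfunction of the Jacobi operator. So you cannot apply Proposition \ref{wilmore energy genus 2}(3) ``verbatim'' to conclude $g(\Sigma)\leq 1$ in the index one case. The standard fix, which is how Ritor\'{e}--Ros handle it in \cite{RR}, is to rerun the Hersch--Yau argument of Theorem \ref{up bound w energy, g=2} with the conformal balancing chosen so that the coordinate test functions are orthogonal to the first eigenfunction rather than mean-zero; the genus and energy bounds, and hence item (3), then carry over to index one minimal surfaces. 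Two smaller points: the exclusion of minimal spheres is cleanest via $3\nmid\chi(\mathbb{S}^2)$, so $\mathbb{Z}_3$ cannot act freely on an invariant equatorial sphere and no \emph{embedded} totally geodesic sphere exists in $L(3,q)$ (a non-embedded totally geodesic immersion of $\mathbb{S}^2$ does exist, so embeddedness genuinely matters here); and the verification that the projected Clifford torus itself has index one, which you rightly flag as the main obstacle, is not carried out in the paper either --- its displayed proof addresses only the first sentence of the corollary.
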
  
\begin{proof}
Let $\Sigma \subset L(3,q)$, $q=1,2$, be in the conditions of the corollary. 
By  Proposition \ref{wilmore energy genus 2}, $g(\Sigma)=0$ or $1$. If $g(\Sigma)=0$, then it follows from the Hopf holomorphic quadratic differential that $\Sigma$ is totally umbilical. If $g(\Sigma)=1$, then it is proved in \cite{RR} that $\Sigma$ is flat. 
\end{proof}
\subsection{Isoperimetric profile}
The isoperimetric properties of $M$ can be encapsulated in a single function called the  \textit{Isoperimetric profile}. This is the function $I_M: [0,\text{vol}(M)] \rightarrow [0,+ \infty)$ defined by 
\begin{eqnarray}
I_M(v)= \inf\{ |\partial \Omega|\,:\,\Omega \subset M \quad \text{and}\quad |\Omega|=v\}.
\end{eqnarray}
We finish the  section with some well known facts on the analytic nature of $I_M$. These will be used later in Section 3. 

Let $\Omega$ be an isoperimetric region in $M$ such that $|\Omega|=v$ for some $v \in (0,\text{Vol}(M))$. The function $I_M$ has left and right derivative $(I_M)_-^{\prime}(v)$ and $(I_M)_+^{\prime}(v)$. In addition, if $H$ is the mean curvature of $\Sigma=\partial \Omega$ in the direction of the inward unit vector, then 
\begin{eqnarray} \label{first derivative}
(I_M)_{+}^{\prime}(v)\leq2H\leq (I_M)_{-}^{\prime}(v).
\end{eqnarray}
The second derivative also exists but  weakly in the sense of comparison functions. More precisely, we say $f^{\prime\prime}\leq h$ weakly at $x_0$ if there exists a smooth function $g$ such that $f \leq g$, $f(x_0)=g(x_0)$, and $g^{\prime\prime}\leq h$. In this sense we have
\begin{eqnarray}\label{second derivative}
I_M(v)^2\, I_M^{\prime\prime}(v)+\int_{\Sigma}\big(Ric_g(N,N)+|A|^2\big)\,d_{\Sigma}\leq\,0.
\end{eqnarray}
The equations  (\ref{first derivative}) and (\ref{second derivative}) are first presented on  \cite{BP}.
We sketch the proof of  (\ref{first derivative}) and (\ref{second derivative}).

Let $\Sigma_V$ be the variation $\Sigma_t= exp_{\Sigma}(tN)$   of $\Sigma$   reparametrized  in terms of the enclosed volume $v(t)$. In addition, let $\phi(t)$ (resp. $\phi(v)$) be the area of $\Sigma_t$ (resp. $\Sigma_v$). 
By the first variation formula for the  area and volume we have  $\phi^{\prime}(0)= 2\,H\, |\Sigma|$  and 
$v^{\prime}(0)=|\Sigma|$ respectively.
Since $\phi^{\prime}(t)=\phi^{\prime}(v)v^{\prime}(t)$, we conclude that
$\phi^{\prime}(v(0))= 2H$ and also that
$v^{\prime}(0)^2\phi^{\prime\prime}(v(0))=\phi^{\prime\prime}(0)-\phi^{\prime}(v(0))v^{\prime\prime}(0)$.
On the other hand,  the second derivative of area for general variations implies the following:
\begin{eqnarray*}
\phi^{\prime\prime}(0)&=&
-\int_{\Sigma}1\,L\,1\,d_{\Sigma} +2H\,v^{\prime\prime}(0)\\
&=& - \int_{\Sigma}\big(Ric_g(N,N)+|A|^2\big)d_{\Sigma} +2H\, v^{\prime\prime}(0).
\end{eqnarray*}
Hence, in the sense of comparison functions,  (\ref{second derivative}) follows from:
\begin{eqnarray}\label{second derivative area}
\phi(v(0))^2\, \phi^{\prime\prime}(v(0))+\int_{\Sigma}\big(Ric_g(N,N)+|A|^2\big)d_{\Sigma}=\,0.
\end{eqnarray}

\section{The Isoperimetric problem in the Lens Spaces}

In order to define the Lens spaces, we first  recall the round three sphere as:
\[\mathbb{S}^3=\{(z,w)\in \mathbb{C}^2\,:\, |z|^2+|w|^2=1 \}.\]
Fix $p,q$   integers with the following property $1\leq q< p$ and $gcd(p,q)=1$. Let $\mathbb{Z}_p$ be the group $\mathbb{Z}/p\mathbb{Z}$ acting on $\mathbb{S}^3$ as follows:
\begin{equation}\label{Zp}
m\in \mathbb{Z}_p\mapsto m\cdot(z,w)=(e^{\frac{2\pi i qm}{p}}z, e^{\frac{2\pi i m}{p}}w).
\end{equation}
The group $\mathbb{Z}_p$ acts freely and properly discontinuous on $\mathbb{S}^3$. The orbit space $\mathbb{S}^3/\mathbb{Z}_p$ is a closed three manifold called the \textit{Lens space}, it is  denoted by $L(p,q)$.

The Hopf fibration, which is the Riemannian submersion $h: \mathbb{S}^3\rightarrow \mathbb{S}^2(\frac{1}{2})$ defined by $h(z,w)=\frac{z}{w}$, can be extended naturally to $L(p,q)$. 
Indeed, the group $\mathbb{Z}_p$ acts on the set of  \textit{Hopf fibers} through the following 
cyclic action on $\mathbb{S}^2(\frac{1}{2})$:
\[\Gamma_p= \langle e^{\frac{2\pi i (q-1)}{p}}\rangle : \lambda \in \mathbb{C}\cup \{\infty\} \mapsto  e^{\frac{2\pi i (q-1)}{p}} \cdot \lambda \in \mathbb{C}\cup \{\infty\}.  \]
The Hopf fibration for $L(p,q)$ is then defined as
$h: L(p,q)\rightarrow \mathbb{S}^2(1/2)/\Gamma_p$. The set $\mathbb{S}^2(1/2)/\Gamma_p$ is a two dimensional orbifold with  conical singularities at the north and south pole.  

The preimage  of horizontal slices of $\mathbb{S}^2(1/2)/\Gamma_p$ via $h$ correspond to the  Clifford torus described in (\ref{clifford torus}). They are natural candidates to solve the Isoperimetric problem in $L(p,q)$.

\subsection{Comments on Steiner Symmetrization for Lens Spaces}

Steiner  and Schwarz symmetrization  theorems were proved in \cite{MHH} for certain fiber bundles such as the Lens spaces.  
To explain this symmetrization procedure we restrict  to the case $L(p,1)$ where the Hopf fibration $h: L(p,1) \rightarrow \mathbb{S}^2(\frac{1}{2})$ is a smooth Riemannian submersion. 

The symmetrization consist of associating to each set of finite perimeter $R\subset  L(p,1)$  the set $Sym(R)$ in the product manifold $\mathbb{S}^2(\frac{1}{2})\times \mathbb{S}^1(\frac{1}{p})$ defined by replacing the slice of $R$ in each fiber with a ball of the same volume about the respective fiber in the product. The coarea formula for Riemannian submersions implies that $Sym(R)$ encloses  the same amount of volume as $R$. It is  proved in \cite{MHH} that $Sym(R)$ has no greater perimeter than $R$.

One immediate consequence is that $I_{L(p,1)}\geq I_{\mathbb{S}^2(\frac{1}{2})\times \mathbb{S}^1(\frac{1}{p})}$. Applying the classification of the isoperimetric problem on 
$\mathbb{S}^2(\frac{1}{2})\times \mathbb{S}^1(\frac{1}{p})$, \cite{PR}, we conclude 
that $I_{L(p,1)}= I_{\mathbb{S}^2(\frac{1}{2})\times \mathbb{S}^1(\frac{1}{p})}$ in a interval around $V=\frac{\text{Vol}(L(p,1))}{2}$.
In particular, the minimal Clifford torus is isoperimetric in $L(p,1)$ for every $p\geq 3$.
The isoperimetric profiles, however, do not coincide as  the profile of geodesic spheres on the respective spaces are different. Therefore, this technique is not enough to completely solve the isoperimetric problem.

It is  important to point out that, for general Lens spaces $L(p,q)$, there is no analogue of \cite{PR} for $\mathbb{S}^2(\frac{1}{2})/\Gamma_p\times \mathbb{S}^1(\frac{1}{n_p})$ which is a manifold having codimension two singularities.

\subsection{Some aspects of Lens Spaces} 

For every $x \in L(p,q)$ the injectivity radius of $L(p,q)$ at $x$ satisfies $\text{inj}_xL(p,q)\geq \frac{\pi}{p}$, with equality only at points in the critical fibers. Indeed, for $\theta=e^{\frac{2\pi i }{p}}$ we have:
\begin{eqnarray*}
a^2:=d_{\mathbb{R}^4}^2[(\theta^{kq} z, \theta^k w),(z,w)]&=&
|\theta^{kq}-1|^2|z|^2+|\theta^k-1|^2|w|^2\geq |\theta-1|^2\\
d_{\mathbb{S}^3}[(\theta^{kq} z, \theta^k w),(z,w)]&=&2\arcsin(\frac{a}{2})\geq 2\arcsin\big({\frac{2}{2}\sin{\frac{2\pi}{2p}}}\big)=\frac{2\pi}{p}.
\end{eqnarray*}
However, is not true in general that $inj(L(p,q), x)=O(\frac{1}{p})$ as $p \rightarrow \infty$.

\begin{example}
Let's consider  $L(k^2,k+1)$, $k\in \mathbb{Z}_{+}$. We show that the injectivity radius at  points  far away from the critical fibres are $O(\frac{1}{k})$. If  the round metric is scaled by the factor $k^2$, then we have the Riemannian submersion:
\[h: \big(L(k^2,k+1), k^2 g_0, x_k\big)\rightarrow \big(\mathbb{S}^2/\mathbb{Z}_k,k^2 g_{\mathbb{S}^2}, h(x_k)\big).\] 
The fibers have constant length $2\pi$ except the critical fibres which have length $\frac{2\pi}{p}$. The right hand side will converge, as $k \rightarrow \infty$, to $\mathbb{S}^1\times \mathbb{R}$.  It follows from  the coarea formula for Riemannian submersions  that the volume of the  geodesic ball $B_{4\pi}(x_k)$ in $\big(L(k^2,k+1), k^2 g_0, x_k\big)$ is bounded from below. Therefore, by Cheeger's inequality, Lemma 51 in \cite{PP}, the injectivity radius of the sequence $\big(L(k^2,k+1), k^2 g_0, x_k\big)$ is bounded from below. This sequence converges to a flat $T^2\times \mathbb{R}$.
\end{example}

If $x, y \in T_{\frac{\pi}{4}}/\mathbb{Z}_p \subset L(p,q)$, then $d_{L(p,q)}(x,y)\geq C d_{T_{\frac{\pi}{4}}/\mathbb{Z}_p}(x,y)$ for some  constant $C>0$ independent of $p,q$. Thus intrinsic and extrinsic distances on $T_{\frac{\pi}{4}}/\mathbb{Z}_p$ are equivalent.

\begin{lemma}\label{injective radius}
\textit{If $x \in T_{\frac{\pi}{4}}/\mathbb{Z}_p\subset L(p,q)$ and the extrinsic diameter of $T_{\frac{\pi}{4}}/\mathbb{Z}_p$ in $L(p,q)$ is bounded from below, then $inj_{x}L(p,q)=O(\frac{1}{p})$.}
\end{lemma}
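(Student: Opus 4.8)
The plan is to exhibit a short closed loop through $x$ based on the $\mathbb{Z}_p$-action and show its length is $O(1/p)$, which bounds the injectivity radius from above. Recall the generator of the deck group is $(z,w)\mapsto(\theta^{q}z,\theta w)$ with $\theta=e^{2\pi i/p}$, and $x$ lies on (the image of) $T_{\pi/4}=\mathbb{S}^1(\tfrac{1}{\sqrt 2})\times\mathbb{S}^1(\tfrac{1}{\sqrt2})$, so a lift $\tilde x=(z,w)$ has $|z|^2=|w|^2=\tfrac12$. For an integer $k$ the point $k\cdot\tilde x=(\theta^{kq}z,\theta^k w)$ projects to the same point of $L(p,q)$ as $\tilde x$, so the $\mathbb{S}^3$-distance $d_{\mathbb{S}^3}(\tilde x, k\cdot\tilde x)$ is an upper bound for $2\,\mathrm{inj}_xL(p,q)$ (a geodesic loop is obtained by concatenating a minimizing geodesic from $\tilde x$ to $k\cdot\tilde x$ with the deck transformation identification). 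From the computation already displayed in the excerpt,
\[
d_{\mathbb{S}^3}(\tilde x,k\cdot\tilde x)^2 \;\le\; \Big(2\arcsin\tfrac{a}{2}\Big)^2,\qquad a^2=|\theta^{kq}-1|^2\,|z|^2+|\theta^k-1|^2\,|w|^2=\tfrac12\big(|\theta^{kq}-1|^2+|\theta^k-1|^2\big).
\]
Using $|\theta^m-1|=2|\sin(\pi m/p)|\le 2\pi|m|/p$ for any integer $m$, we get $a\le \tfrac{\pi}{p}\sqrt{k^2q^2+k^2}$, hence $d_{\mathbb{S}^3}(\tilde x,k\cdot\tilde x)\le a\le \tfrac{\pi k\sqrt{q^2+1}}{p}$; this is already $O(1/p)$ for fixed $k$ but the factor $q$ is a problem when $q$ is large, so a more careful choice of $k$ is needed.

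The key step, and the main obstacle, is to choose an integer $k=k(p,q)$, with $1\le k\le p-1$, making $\theta^{kq}$ close to $1$ simultaneously with $\theta^{k}$ close to $1$ — equivalently, making both $k$ and $kq$ small modulo $p$. This is precisely a statement of simultaneous Diophantine approximation / the geometry of the lattice $\{(k \bmod p,\ kq \bmod p):k\in\mathbb{Z}\}\subset(\mathbb{Z}/p)^2$. By a pigeonhole (Minkowski) argument applied to the $p$ points $(k\bmod p, kq\bmod p)$ for $k=0,\dots,p-1$ inside the torus $(\mathbb{R}/p\mathbb{Z})^2$, there exist $0\le k_1<k_2\le p-1$ whose images agree to within a box of side $\sqrt{p}$ in each coordinate; setting $k=k_2-k_1$ gives a nonzero residue with $\|k\|_{\mathbb{Z}/p}\le\sqrt p$ and $\|kq\|_{\mathbb{Z}/p}\le\sqrt p$, where $\|m\|_{\mathbb{Z}/p}$ denotes the distance from $m$ to the nearest multiple of $p$. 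Consequently $|\theta^{k}-1|\le 2\pi\sqrt p/p$ and $|\theta^{kq}-1|\le 2\pi\sqrt p/p$, so $a\le \tfrac{2\pi}{\sqrt p}$, which only yields $\mathrm{inj}_xL(p,q)=O(1/\sqrt p)$ — not good enough on its own.

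To upgrade $O(1/\sqrt p)$ to $O(1/p)$ we must use the extra hypotheses: $x\in T_{\pi/4}/\mathbb{Z}_p$ and that the extrinsic diameter of $T_{\pi/4}/\mathbb{Z}_p$ in $L(p,q)$ is bounded below. The point is that the torus $T_{\pi/4}$ carries the flat metric in which the deck group acts as translations by $\tfrac{1}{\sqrt2}(\tfrac{2\pi q}{p},\tfrac{2\pi}{p})$; the quotient $T_{\pi/4}/\mathbb{Z}_p$ is a flat torus whose systole in one direction is comparable to the extrinsic diameter lower bound $\delta$, and whose total area is fixed. By the equivalence of intrinsic and extrinsic distances on $T_{\pi/4}/\mathbb{Z}_p$ noted just before the lemma, a lower bound on the extrinsic diameter forces the longer side of this flat torus to have length $\gtrsim\delta$, and since the area is $2\pi^2/p$ the shorter side — a genuine short loop on $T_{\pi/4}/\mathbb{Z}_p$ through $x$ — has length $\lesssim 1/(\delta p)=O(1/p)$. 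Lifting this loop to a loop in $L(p,q)$ based at $x$ and invoking again $\mathrm{inj}_xL(p,q)\le\tfrac12(\text{length of a noncontractible loop})$ (noting the loop is noncontractible in $L(p,q)$ because its class generates a subgroup of $\pi_1$, or handling the contractible case separately via a disk-filling/monotonicity argument), we conclude $\mathrm{inj}_xL(p,q)=O(1/p)$. I expect the delicate part to be making the last paragraph rigorous: translating ``extrinsic diameter of $T_{\pi/4}/\mathbb{Z}_p$ bounded below'' into a quantitative lower bound on the systole of the relevant direction of the flat torus, and then guaranteeing that the resulting short loop is noncontractible in the ambient $L(p,q)$ so that it genuinely bounds the injectivity radius from above.
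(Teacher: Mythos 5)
Your first two paragraphs are acknowledged dead ends; the real proof is your third paragraph, and it is correct in outline but follows a genuinely different route from the paper. The paper rescales the metric by $\lambda_p=1/\mathrm{inj}_xL(p,q)$, packs $[\lambda_p]+1$ disjoint balls of fixed radius along a geodesic realizing the diameter of $T_{\pi/4}/\mathbb{Z}_p$, and plays the monotonicity-formula lower bound $\mathcal{H}^2(B_R(x_i)\cap T_{\pi/4}/\mathbb{Z}_p)\geq C_1$ against the total area $\lambda_p^2\cdot 2\pi^2/p$; summing over the balls forces $\lambda_p\gtrsim p$. You instead use the explicit flat structure: $T_{\pi/4}/\mathbb{Z}_p=\mathbb{R}^2/\Lambda_p$ has area $2\pi^2/p$, and for a reduced basis $v_1,v_2$ of $\Lambda_p$ one has $\mathrm{Area}\geq\tfrac{\sqrt{3}}{2}|v_1||v_2|$ while $\mathrm{diam}\lesssim|v_2|$, so the systole $|v_1|$ is at most a constant times $(2\pi^2/p)/\delta=O(1/p)$; since extrinsic distance never exceeds intrinsic distance, the extrinsic diameter bound already gives the intrinsic one you need. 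Both arguments run on the same tension (area $\sim 1/p$ versus diameter bounded below), but the paper's is softer and would apply to any minimal torus, whereas yours is more elementary precisely because $T_{\pi/4}$ is flat and homogeneous. The point you flag as delicate closes cleanly and you should include it: write $\Lambda_p=\Lambda_0+\mathbb{Z}v$ with $\Lambda_0$ the lattice of $T_{\pi/4}$ itself (shortest nonzero vector of length $\sqrt{2}\pi$) and $v=\tfrac{1}{\sqrt{2}}(\tfrac{2\pi q}{p},\tfrac{2\pi}{p})$; the map $\pi_1(T_{\pi/4}/\mathbb{Z}_p)\to\pi_1(L(p,q))=\mathbb{Z}_p$ sends $w+kv\mapsto k\bmod p$ for $w\in\Lambda_0$, and $kv\in\Lambda_0$ if and only if $p\mid k$, so any lattice vector of length less than $\sqrt{2}\pi$ --- in particular the systolic one, once $p$ is large --- represents a nontrivial class in $\mathbb{Z}_p$. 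Its lift then joins $\tilde{x}$ to $\gamma\tilde{x}$ with $\gamma\neq e$, and since the flat metric on $T_{\pi/4}$ is the induced one (so ambient and intrinsic lengths of the loop agree), $\mathrm{inj}_xL(p,q)\leq\tfrac{1}{2}d_{\mathbb{S}^3}(\tilde{x},\gamma\tilde{x})=O(1/p)$. (A minor slip in your first paragraph: $d_{\mathbb{S}^3}=2\arcsin(a/2)\geq a$, not $\leq a$, though this only costs a multiplicative constant and that paragraph is discarded anyway.)
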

\begin{proof}
Let $\lambda_p= \frac{1}{inj_x L(p,q)}$ and recall that $\frac{1}{\lambda_p}\geq \frac{\pi}{p}$. 
Without loss of generality, let's assume that $\text{diameter}_{L(p,q)}(T_{\frac{\pi}{4}}/\mathbb{Z}_p)\geq 1$.
Hence, under the rescaled metric $\lambda_p^2\, g_{\mathbb{S}^3}$, the extrinsic diameter of $T_{\frac{\pi}{4}}/\mathbb{Z}_p$ is greater than or equal to $\lambda_p$. 
Let $\gamma_p(t)$ be a geodesic segment realizing the intrinsic diameter of $T_{\frac{\pi}{4}}/\mathbb{Z}_p$. Thus, we can find disjoint balls $B_R(x_i)\subset L(p,q)$, with $R< \frac{C}{2}$, $x_i \in \gamma_p(t)$, and $i=1,\ldots, [\lambda_p]+1$. Hence, $$\sum_{i=1}^{[\lambda_p]+1}\mathcal{H}^2(B_R(x_{i_0})\cap T_{\frac{\pi}{4}}/\mathbb{Z}_p) \leq |T_{\frac{\pi}{4}}/\mathbb{Z}_p|= \lambda_p^2\,\frac{2\pi^2}{p}.$$
Therefore,  there exists $i_0\in \{1,\ldots, [\lambda_p]+1\}$ such that:
\[C_1\leq \mathcal{H}^2(B_R(x_{i_0})\cap T_{\frac{\pi}{4}}/\mathbb{Z}_p)\leq \frac{2\pi^2\cdot \lambda_p}{p}.\]
The first inequality follows from the Monotonicity Formula, Proposition \ref{monotonicity} in the Appendix, applied to $T_{\frac{\pi}{4}}\subset (L(p,q),\lambda_p^2\,g_{\mathbb{S}^3},x)$. This finishes the proof of the lemma.
\end{proof}

Let's use the notation $\mathbb{Z}_{p}^{q}$ to represent the group $\mathbb{Z}_p$ acting on $\mathbb{S}^3$ and its dependence on the parameter $q$.
By using the toroidal coordinate system for $\mathbb{S}^3$,
\[\varphi_r : \mathbb{R}^2 \rightarrow T_r: \varphi_r(u,v)=(\cos(r) e^{2\pi i u}, \sin(r)e^{2\pi i v})\in  \mathbb{S}^3,\]
the action of $\mathbb{Z}_p^q$ on $T_r$ corresponds to the following action  on $\mathbb{R}^2$:
\[(u,v) \longmapsto (u+ \frac{k q}{p}, v + \frac{k}{p}).\] 
In these coordinates , the $\mathbb{Z}_p^q$ orbit at the point $(z_0,w_0)=\varphi (\mathbb{Z}\times \mathbb{Z})\in T_{r}$  is given by:
\begin{eqnarray}\text{Orbit}_{p,q}(z_0,w_0)=\{(m,n) + k(\frac{1}{p}, \frac{q}{p}): m,n,k \in \mathbb{Z}\}.
\end{eqnarray}

\begin{lemma}\label{subsequences}
\textit{Given a sequence $\{L(p,q)\}_{p \in \mathbb{N}}$, there exist $b,m_0,n_0 \in \mathbb{Z}$ and  a subsequence $\{L(p_l,q_l)\}_{l \in \mathbb{N}}$ such that one of the following holds:
\begin{enumerate}
\item For every $(z_0,w_0) \in T_r$, $\varphi_r(\text{Orbit}_{p_l,q_l}(z_0,w_0))$ is becoming dense on $T_{r}$ as $l\rightarrow\infty$.
\item $\varphi_r(\text{Orbit}_{p_l,q_l}(z_0,w_0))$ is contained in $b$ integral curves  of $X(z,w)=(m_0 \sqrt{-1}\, z, n_0 \sqrt{-1}\,w)\in \mathcal{X}(\mathbb{S}^3)$.
\end{enumerate}}
\end{lemma}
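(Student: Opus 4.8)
The plan is to analyze the behavior of the orbits $\mathrm{Orbit}_{p_l,q_l}(z_0,w_0)=\{(m,n)+k(\tfrac{1}{p_l},\tfrac{q_l}{p_l}):m,n,k\in\mathbb{Z}\}$ in the flat torus $T_r\cong \mathbb{R}^2/\mathbb{Z}^2$ purely via the arithmetic of the ``direction vector'' $(1,q_l)\bmod p_l$. The key observation is that the orbit of a single point is the subgroup of $\mathbb{R}^2/\mathbb{Z}^2$ generated by the element $v_l=(\tfrac{1}{p_l},\tfrac{q_l}{p_l})$, so its closure is a closed subgroup of the torus, hence either all of $T_r$, or a finite union of parallel subtori (circles or points). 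The dichotomy in the statement is exactly: the ``dense'' case versus the ``contained in finitely many integral curves of a Killing field $X$'' case. So the whole lemma reduces to passing to a subsequence along which the cyclic subgroups $\langle v_l\rangle$ behave uniformly.

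First I would recall that a cyclic subgroup of $\mathbb{R}^2/\mathbb{Z}^2$ generated by $(\tfrac{a}{p},\tfrac{b}{p})$ with $\gcd(a,b,p)=1$ is finite of order $p$, and its closure is again this finite set; but the \emph{relevant} limiting object as $p\to\infty$ is governed by whether the points $k\cdot(\tfrac{1}{p_l},\tfrac{q_l}{p_l})$, $k=0,\dots,p_l-1$, equidistribute in $T_r$ or concentrate near a lower-dimensional subgroup. Here is the arithmetic heart: consider the lattice $\Lambda_l=\{(x,y)\in\mathbb{Z}^2: x\equiv 0,\ y\equiv q_l x\pmod{p_l}\}$... more simply, look at all integer vectors $(m_0,n_0)$ with $m_0\cdot 1+ n_0\cdot\text{(something)}\equiv 0$; the cleanest route is: the orbit of $(z_0,w_0)$ lies in the $1$-parameter subgroup (integral curve of $X=(m_0\sqrt{-1}z,n_0\sqrt{-1}w)$) through that point iff the vector $(\tfrac{1}{p_l},\tfrac{q_l}{p_l})$ lies on the line $n_0 x - m_0 y\in\mathbb{Z}$ (i.e.\ modulo the integer lattice it lies on the rational line of slope $m_0/n_0$), which happens iff $n_0\equiv m_0 q_l\pmod{p_l}$. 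So I would set up: for each $l$, let $(m_0^{(l)},n_0^{(l)})$ be a shortest nonzero integer vector with $n_0^{(l)}\equiv m_0^{(l)}q_l\pmod{p_l}$; its Euclidean norm $N_l$ is, by Minkowski/Dirichlet applied to the sublattice $\{(x,y):y\equiv q_l x \bmod p_l\}$ of covolume $p_l$, at most $O(\sqrt{p_l})$. Two cases: if $N_l$ stays bounded along a subsequence, then $(m_0^{(l)},n_0^{(l)})$ takes only finitely many values, so we can pass to a further subsequence on which it is constant $=(m_0,n_0)$, and then every orbit point lies on one of finitely many ($b$ of them, $b$ bounded by the index of the line lattice) translates of the integral curve of $X(z,w)=(m_0\sqrt{-1}z,n_0\sqrt{-1}w)$ — this is alternative (2). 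If instead $N_l\to\infty$ along the chosen subsequence, I claim the orbits become dense: the existence of only ``long'' integer relations forces, by a standard three-distance / Dirichlet-box argument, that the finite orbit $\{k v_l\}$ is $\epsilon$-dense in $T_r$ for $l$ large (given $\epsilon$, if the orbit missed an $\epsilon$-ball one produces a short integer relation of norm $\lesssim 1/\epsilon$, contradicting $N_l\to\infty$) — this is alternative (1).

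The extraction of $b$ requires a little care: after fixing $(m_0,n_0)$ constant, the orbit $\{(m,n)+kv_l\}$ projects, under the linear functional $(x,y)\mapsto n_0 x-m_0 y \bmod 1$, to a cyclic subgroup of $\mathbb{R}/\mathbb{Z}$ generated by $n_0/p_l - m_0 q_l/p_l=(n_0-m_0q_l)/p_l$, which is a finite set of $b_l:=p_l/\gcd(p_l,n_0-m_0q_l)$ points equally spaced; passing to a subsequence on which $b_l$ is constant $=b$ (if $b_l\to\infty$ we would again be in the dense case on the image, hence globally dense and in case (1)) gives that the orbit lies in $b$ fibers of that functional, i.e.\ $b$ integral curves of $X$. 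I would then note the geometric translation back to $\mathbb{S}^3$: an integral curve of $X(z,w)=(m_0\sqrt{-1}z,n_0\sqrt{-1}w)$ meets $T_r$ in exactly such a line (of ``slope'' $(m_0,n_0)$) in the $(u,v)$-coordinates, since $\varphi_r$ conjugates $X$ to the constant field $(m_0\partial_u+n_0\partial_v)$ on $\mathbb{R}^2/\mathbb{Z}^2$.

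The main obstacle is organizing the case split cleanly so that it is genuinely a dichotomy after a \emph{single} passage to a subsequence rather than an infinite regress: one has to decide whether $N_l$ (equivalently the relevant gcd's) stays bounded or tends to infinity, and in the unbounded case prove the density estimate uniformly. The density step is where I expect the real work — showing that ``no short integer relation on the direction vector'' implies ``$\epsilon$-dense orbit'' — but this is exactly a pigeonhole/Dirichlet argument on the $p_l$ points $kv_l\bmod\mathbb{Z}^2$: among $p_l$ points in the unit square, if they were not $\epsilon$-dense some $\epsilon$-ball is empty, and subtracting two orbit points lying in a common small box (which must exist once $p_l>\epsilon^{-2}$) yields a nonzero element $k v_l\bmod\mathbb{Z}^2$ of norm $<\epsilon$ with $0<k<p_l$, i.e.\ an integer vector $(k,kq_l)-p_l(a,c)$... which, dualized, contradicts $N_l\to\infty$. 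Everything else — bounding $N_l$ by $O(\sqrt{p_l})$, the conjugation of $X$ to a constant vector field, the final identification of $b$ curves — is routine linear algebra and lattice geometry.
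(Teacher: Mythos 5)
Your argument is correct in substance but takes a genuinely different route from the paper's. The paper's dichotomy is geometric: either the diameter of $T_{\pi/4}/\mathbb{Z}_p$ tends to zero (which immediately gives density), or it is bounded below, in which case the injectivity-radius lemma --- itself proved via the monotonicity formula applied to the Clifford torus --- forces $\mathrm{inj}\,(T_{\pi/4}/\mathbb{Z}_p)=O(1/p)$ and hence produces integers $k_p,m_p,n_p$ with $0<\|k_p(\tfrac1p,\tfrac qp)-(m_p,n_p)\|\le C/p$; the bounded vector $(m_0,n_0)=(k_p-pm_p,k_pq-pn_p)$ is then constant along a subsequence and the orbit lies on finitely many parallel lines in that direction. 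You replace this with pure lattice geometry: your sublattice $\{(x,y):y\equiv q_lx\bmod p_l\}$ is exactly $p_l$ times the orbit lattice $\mathbb{Z}^2+\mathbb{Z}(\tfrac1{p_l},\tfrac{q_l}{p_l})$, your $N_l$ is $p_l$ times its first minimum, and your congruence $n_0\equiv m_0q_l\pmod{p_l}$ is the paper's identity $k_pq-pn_p\equiv(k_p-pm_p)q$. The two dichotomies therefore coincide ($N_l$ bounded if and only if the quotient diameter is bounded below, since $\lambda_1\lambda_2\asymp 1/p_l$), but you get the short vector from Minkowski's theorem rather than from the monotonicity formula, which makes the lemma self-contained and independent of the surrounding geometric machinery --- a genuine simplification. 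One step needs tightening: the pigeonhole you describe in the density case (two orbit points in a common small box) only yields an \emph{upper} bound on the first minimum of the orbit lattice, which is not the contradiction you want; the correct chain is ``not $\epsilon$-dense $\Rightarrow$ covering radius $\ge\epsilon$ $\Rightarrow$ $\lambda_2\gtrsim\epsilon$ $\Rightarrow$ $\lambda_1\lesssim 1/(p_l\epsilon)$ $\Rightarrow$ $N_l\lesssim 1/\epsilon$,'' i.e.\ you must pass through the second minimum (or the dual lattice), which is exactly the ``dualization'' you allude to but should write out. Your extraction of $b$ is fine; note that $b_l$ is automatically bounded once $(m_0,n_0)$ is, because the spacing between adjacent orbit lines is $1/N_l$.
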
 
\begin{proof} 
To prove the lemma it is enough to consider $(z_0,w_0)\in T_{\frac{\pi}{4}}$. If there is a subsequence for which the diameter of $T_{\frac{\pi}{4}}/ \mathbb{Z}_p^q$ is going to zero as $p \rightarrow \infty$, then 
$\varphi_{\frac{\pi}{4}}(\text{Orbit}_{p,q}(z_0,w_0))$ is clearly  becoming dense on $T_{\frac{\pi}{4}}$ and item 1 is proved. 

Let's consider  now the case where the diameter of $T_{\frac{\pi}{4}}$ in $L(p,q)$ is bounded away from zero. From the equivalence between extrinsic and intrinsic distance and by Lemma \ref{injective radius} we conclude that the Euclidean injectivity radius satisfies $inj_{(z_0,w_0)} T_{\frac{\pi}{4}}/\mathbb{Z}_p= O(\frac{1}{p})$. 
In particular, there exist $k_p,m_p,n_p \in \mathbb{Z}$ such that 
\[0< ||k_p(\frac{1}{p}, \frac{q}{p})-(m_p,n_p)||_{\mathbb{R}^2}\leq \frac{C}{p} .\]
Therefore, there exists $(m_0,n_0)\in B_{C}(0)\cap \mathbb{Z}\times \mathbb{Z}\subset \mathbb{R}^2$ such that $(k_p-pm_p,k_pq-n_pp)=(m_0,n_0)$ infinitely often and $\frac{\sqrt{m_0^2+n_0^2}}{2p}$ is the Euclidean injectivity radius of $T_{\frac{\pi}{4}}/\mathbb{Z}_p$ at $(z_0,w_0)$ for this subsequence. 
Hence, the sub-orbit generated by the translation  $(u,v) \rightarrow (u,v)+k_p(\frac{1}{p}, \frac{q}{p})$ is contained in the line $\mathbb{Z}\times \mathbb{Z} +\{t(m_0,n_0): t \in \mathbb{R}\}$. 
It follows that the $\text{Orbit}_{p,q}(z_0,w_0)$ is contained in  a union of equidistant lines parallel to the one described above by homogeneity.  
Modulo $\mathbb{Z}\times \mathbb{Z}$ the number of such lines is finite, let's denote it by $b_p$. Modulo $\mathbb{Z}\times \mathbb{Z}$  there are $\frac{p}{b_p}$ points of $\text{Orbit}_{p,q}(z_0,w_0)$ in each of these lines. Hence, 
\[\frac{p}{b_p}(\frac{k_p}{p},\frac{k_p q}{p})-\frac{p}{b_p}(m_p,n_p)=\frac{p}{b_p}(\frac{m_0}{p},\frac{n_0}{p}) \in \mathbb{Z}\times\mathbb{Z}.\] 
Therefore,  $b_p$ divides $m_0$ and  is independent of $p$. In other words, $\varphi_{\frac{\pi}{4}}(\text{Orbit}_{p,q}(z_0,w_0))$ is contained in $b$ integral curves of 
$X(z,w)=(m_0 \sqrt{-1} z, n_0 \sqrt{-1}w)\in \mathcal{X}(\mathbb{S}^3)$.
\end{proof}	

\begin{thm1.1}\label{main theorem}
\textit{There exists a positive  integer $p_0$ such that for every $p\geq p_0$ and every $q\geq 1$ the isoperimetric surfaces in  $L(p,q)$ are either geodesic spheres or tori of revolution about geodesics.}
\end{thm1.1} 

\begin{proof} [Proof of Theorem 1.1]
Arguing by contradiction, let's assume that there exists an infinite sequence of Lens spaces $L(p,q)$ containing an isoperimetric surface $\Sigma_p$ of genus $2$ or $3$ for each $p$.

We  use the Cheeger-Gromov convergence for the sequence of Lens spaces:

\noindent \textbf{Cheeger-Gromov Convergence}\,: A sequence $(M_i,g_i,p_i)$ converges, in the sense of Cheeger-Gromov,  to 
$(M,g,p)$ if the following two conditions hold true:
\begin{enumerate}
\item There exists an exhaustion of $M$ by compacts $\Omega_i$, i.e. $\Omega_i\subset\Omega_{n+1}$ and $\bigcup_i\Omega_i=M$.
\item There exists a family of diffeomorphism onto their images, $\phi_i:\Omega_i\rightarrow M_i$, such that $\phi_i(p)=p_i$ and $\phi_i^{*}g_i\rightarrow g$ in the
$C^{\infty}$ topology.
\end{enumerate} 
We consider  pointed manifolds  $(L(p,q), p^2 g_{\mathbb{S}^3},x_p)$ with  base points  $x_p$  belonging to $ \Sigma_p$. By the Cheeger-Gromov compactness theorem we have that $(L(p,q), p^2 g_{\mathbb{S}^3},x_p)\rightarrow(M,\delta, x_{\infty})$, where $(M,\delta)$ is a flat three manifold. The inclusion of $\Sigma_p$ into $M$ through the diffeomorphism $\phi_p$ is still denoted by $\Sigma_p$. 
  
\begin{lemma}\label{bound  second fundmental form lemma}
\textit{Let $(L(p,q), p^2 g_{\mathbb{S}^3},x_p)\rightarrow (M,\delta, x_{\infty})$ as above. There exists a constant $C>0$ such that $|A_{\Sigma_p}|_{p^2g_{\mathbb{S}^3}}\leq C$.}
\end{lemma}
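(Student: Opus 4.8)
The plan is to bound the second fundamental form of $\Sigma_p$, in the rescaled metric $p^2 g_{\mathbb{S}^3}$, by a blow-up/contradiction argument of the standard Schoen type for stable CMC surfaces. Suppose the bound fails: then after passing to a subsequence there are points $y_p \in \Sigma_p$ with $\mu_p := |A_{\Sigma_p}|_{p^2 g_{\mathbb{S}^3}}(y_p) \to \infty$, and one may choose $y_p$ to (almost) maximize $|A_{\Sigma_p}|$ on a fixed-size intrinsic ball. Rescale the ambient metric further by $\mu_p^2$, i.e. work with $\mu_p^2 p^2 g_{\mathbb{S}^3}$ and base point $y_p$. Since $(L(p,q), p^2 g_{\mathbb{S}^3}, x_p)$ already converges to a flat three-manifold $(M,\delta)$ with bounded geometry, the further rescaled spaces $(L(p,q), \mu_p^2 p^2 g_{\mathbb{S}^3}, y_p)$ converge in Cheeger--Gromov to flat $\mathbb{R}^3$ (curvature and injectivity radius behave correctly under the additional homothety $\mu_p \to \infty$). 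By construction $|A_{\Sigma_p}| \le 2$ near $y_p$ in the new scale, with $|A_{\Sigma_p}|(y_p) = 1$, so standard elliptic estimates for the CMC equation give local $C^{k}$ bounds and a subsequence of the $\Sigma_p$ converging smoothly on compact sets to a complete surface $\Sigma_\infty \subset \mathbb{R}^3$.

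Next I would identify the limit and derive the contradiction. The limit $\Sigma_\infty$ is a complete surface in $\mathbb{R}^3$; its mean curvature is the limit of the rescaled mean curvatures $H_{\Sigma_p}/(\mu_p p)$. Here I would use that $H_{\Sigma_p}$ in the scale $p^2 g_{\mathbb{S}^3}$ is controlled: by the isoperimetric profile inequalities (\ref{first derivative}) together with Proposition \ref{wilmore energy genus 2}(1), which gives $(1+H^2)|\Sigma_p| \le 2\pi$ in the unit-sphere scale, hence $H_{\Sigma_p}$ in the scale $p^2 g_{\mathbb{S}^3}$ is uniformly bounded; dividing by $\mu_p \to \infty$ forces $\Sigma_\infty$ to be a complete \emph{minimal} surface in $\mathbb{R}^3$. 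Moreover $\Sigma_\infty$ is stable: the stability inequality (\ref{stability inequality}) passes to the limit because $Ric \to 0$ and the test functions can be taken with the mean-value-zero constraint handled in the usual way on the limit (one uses compactly supported $f$ and the fact that the volume constraint disappears in the blow-up, so one recovers the unconstrained stability operator $-\Delta - |A|^2$). By the Fischer-Colbrie--Schoen / do Carmo--Peng theorem, a complete stable minimal surface in $\mathbb{R}^3$ is a plane, so $|A_{\Sigma_\infty}| \equiv 0$; but smooth convergence gives $|A_{\Sigma_\infty}|(y_\infty) = 1$, a contradiction. Hence the original bound $|A_{\Sigma_p}|_{p^2 g_{\mathbb{S}^3}} \le C$ holds.

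The main obstacle I anticipate is making the convergence and the point-selection genuinely legitimate: one must pick the blow-up points using a point-picking lemma (e.g. choose $y_p$ so that $|A_{\Sigma_p}|(y_p)\, r_p \to \infty$ where $r_p$ is the distance to the boundary of a controlled region, and $|A_{\Sigma_p}| \le 2|A_{\Sigma_p}|(y_p)$ on a ball of radius $\tfrac12 r_p \mu_p$ in the new scale), and then verify that the $\Sigma_p$ do not locally disappear — i.e. a lower area-density bound near $y_p$ — which is exactly where the Monotonicity Formula (Proposition \ref{monotonicity} in the Appendix, already invoked in Lemma \ref{injective radius}) is needed to guarantee the limit $\Sigma_\infty$ is nonempty and has the point $y_\infty$ on it. A secondary delicate point is justifying that the limiting test functions for stability are admissible; this is routine but must be stated. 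Everything else — the ambient Cheeger--Gromov convergence to $\mathbb{R}^3$ under the double rescaling, elliptic regularity for the CMC graphs, and the Fischer-Colbrie--Schoen rigidity — is standard.
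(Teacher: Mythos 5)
Your overall strategy is the same as the paper's: blow up at a point where $|A_{\Sigma_p}|$ is (essentially) maximal, pass to a complete stable limit surface in flat $\mathbb{R}^3$, invoke a rigidity theorem to conclude the limit is totally geodesic, and contradict $|A|(y_\infty)=1$. The paper uses the global maximum point (legitimate since $\Sigma_p$ is compact, so no point-picking lemma is needed), obtains the limit via the strong compactness result for isoperimetric surfaces (Corollary \ref{compactness iso}), and keeps the limit as a stable \emph{CMC} surface, disposing of the compact case by Alexandrov plus genus preservation and of the non-compact case by the monotonicity formula and Da Silveira's Theorem \ref{silveira}.

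The one step in your route that does not go through as written is the claim that the mean curvature of $\Sigma_p$ is uniformly bounded in the metric $p^2 g_{\mathbb{S}^3}$, which you need in order to make the blow-up limit \emph{minimal} and hence to apply Fischer-Colbrie--Schoen. Proposition \ref{wilmore energy genus 2}(1) gives $(1+H^2)|\Sigma_p|\leq 2\pi$ in the unit-scale metric, hence $H^2\leq 2\pi/|\Sigma_p|$; to convert this into a bound on $H/p$ you need a lower bound $p^2|\Sigma_p|\geq c>0$, equivalently a lower bound on the extrinsic diameter of $\Sigma_p$ at scale $1/p$, and neither your cited inequalities nor the monotonicity formula supply this (monotonicity only gives area $\gtrsim r^2$ up to $r\approx\min\{\mathrm{inj},1/H\}$, which is consistent with a tiny surface of diameter $\sim 1/H\ll 1/p$). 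The pointwise inequality $|2H|\leq\sqrt{2}\,|A|$ only yields $H_p/\mu_p\leq 1/\sqrt{2}$, so without the extra input the limit is a complete stable CMC surface with possibly nonzero mean curvature. This matters precisely in the compact case: a round sphere of the appropriate radius is volume-preserving stable, Alexandrov-rigid, and has $|A|\equiv 1$, so it is \emph{not} a contradiction by itself --- one must rule it out by genus, which requires the multiplicity-one graphical convergence of Corollary \ref{compactness iso} (this is where the isoperimetric hypothesis, not just stability, is used) together with the standing assumption $g(\Sigma_p)=2$ or $3$. So either repair the mean curvature bound with a genuine lower area/diameter bound, or follow the paper: allow a CMC limit, kill the compact case by Alexandrov plus preservation of topology, and replace Fischer-Colbrie--Schoen by Da Silveira's theorem (which also handles the passage from volume-preserving to unconstrained stability that you flagged as routine, via the infinite-area hypothesis).
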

\begin{proof}
Let $y_p\in \Sigma_p \subset L(p,q)$ be such that $|A_p|(y_p)=\max_{\Sigma_p}|A_p|^2$ and define
$\lambda_p= \max_{\Sigma_p}|A_p|(y_p)$. Arguing by contradiction, let's assume that $\frac{\lambda_p}{p}\rightarrow \infty$. In local coordinates around $y_p$  we consider the surface $\Sigma_p^{\prime}=\lambda_p \Sigma_p$ on the Euclidean ball $B_{\lambda_p\frac{\pi}{10p}}(0)$ endowed with the  rescaled  metric $\lambda_p^2\,g_{\mathbb{S}^3}$.
Therefore, $(B_{\lambda_p\frac{\pi}{10p}}(0),\lambda_p^2\,g_{\mathbb{S}^3}, y_p)$  converges to $(\mathbb{R}^3,\delta,0)$ as $p \rightarrow \infty$. The surface $\Sigma_p^{\prime}$  now has the property that $\max_{\Sigma_p^{\prime}}|A_p^{\prime}(0)|^2=1$.

By  the strong compactness  for a sequence of isoperimetric surfaces with bounded second fundamental form, see  Corollary \ref{compactness iso} in the Appendix,  there exists a subsequence converging to a properly  embedded surface $\Sigma^{\prime}\subset \mathbb{R}^3$, the convergence is in the sense of graphs and  with multiplicity one. Moreover,
$\Sigma^{\prime}$ is also stable, i.e.:
\[I_{\Sigma^{\prime}}(f,f)\geq 0, \,\, \forall f\in C_0^{\infty}(\Sigma^{\prime})\,\, \textit{satisfying}\, \int_{\Sigma^{\prime}}f\,d_{\Sigma^{\prime}}=0.\]
If $\Sigma^{\prime}$ is compact, then it has to be a round sphere by Alexandrov's Theorem, which is a contradiction since strong convergence preserves topology.  
If $\Sigma^{\prime}$ is
non-compact, then it has infinite area  by the monotonicity formula:
indeed, by Proposition \ref{monotonicity} in the Appendix there exists a positive constant $C$ such that
\[\frac{d}{dr}\bigg(\frac{e^{C\,r}\,|\Sigma^{\prime}\cap B_r(x)|}{r^2}\bigg)\geq 0.\] In particular, $|\Sigma^{\prime}\cap B_r(x)|\geq \pi r^2$. As $\Sigma^{\prime}$ is properly embedded,  it has infinite extrinsic diameter and the claim follows. Therefore,  $\Sigma^{\prime}$ is totally geodesic by Da Silveira's Theorem \ref{silveira} in the Appendix, which is a contradiction since $\max_{\Sigma^{\prime}}|A|=1$.
\end{proof}
\begin{lemma}\label{rank one}
\textit{Let $(L(p,q), p^2 g_{\mathbb{S}^3},x_p)\rightarrow (M,\delta, x_{\infty})$ as above, then  $M$ is a flat manifold of rank at most one.}
\end{lemma}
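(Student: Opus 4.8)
The plan is to show that the flat limit $(M,\delta)$, obtained as the Cheeger--Gromov limit of the rescaled Lens spaces $(L(p,q),p^2 g_{\mathbb{S}^3},x_p)$, cannot have rank $2$ or $3$ — that is, $M$ cannot be $T^3(\text{scaled})$, $T^2\times\mathbb{R}$, or a flat twisted analogue with two or three bounded directions. The key point is that the critical fibers of the Hopf fibration have length $2\pi/p$ in $g_{\mathbb{S}^3}$, so under the rescaling by $p^2$ they have length $2\pi p \to \infty$; hence at least one direction in the limit is unbounded, and $M$ has at least one noncompact (line) factor. The real content is ruling out two bounded directions.

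First I would use the fact that the basepoints $x_p$ lie on $\Sigma_p$, together with the genus bound from Proposition \ref{wilmore energy genus 2}(1): since $g(\Sigma_p)=2$ or $3$, the total area of $\Sigma_p$ in $g_{\mathbb{S}^3}$ is at most $2\pi$, so in $p^2 g_{\mathbb{S}^3}$ the area is at most $2\pi p^2$. Meanwhile, by Lemma \ref{bound second fundmental form lemma} the second fundamental form is uniformly bounded, and the ambient geometry is uniformly controlled, so a fixed-size ball $B_R(x_\infty)\subset M$ must meet a definite-area piece of the limit surface $\Sigma_\infty$ (by the monotonicity formula, Proposition \ref{monotonicity} in the Appendix, as used in Lemma \ref{bound second fundmental form lemma}). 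The strategy is then: if $M$ had rank $\ge 2$, its injectivity radius would be bounded above on $M$ (the torus directions have bounded size), so one could find a definite number of disjoint unit balls in $M$ covering a large portion of the bounded directions; pulling this back to $L(p,q)$ via $\phi_p$ and counting area contributions of $\Sigma_p$ against the $O(p^2)$ area bound would force the bounded directions of $M$ to have size that grows, contradicting rank $\ge 2$. Concretely, a flat manifold of rank $\ge 2$ has $\Vol$ of large balls growing at most linearly in the radius (for rank exactly $2$) or staying bounded (rank $3$), whereas the monotonicity lower bound $|\Sigma_\infty\cap B_r|\ge \pi r^2$ for the noncompact pieces and the genus/area budget cannot both be accommodated unless there is exactly one unbounded direction.

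More precisely, I would argue as in Lemma \ref{injective radius}: if $M$ has rank $\ge 2$, then there is an upper bound $\mathrm{inj}_y M\le D$ for all $y$, which transfers (for large $p$) to a bound $\mathrm{inj}_{y} L(p,q)\le D/p$ along $\Sigma_p$-points near $x_p$ in $p^2 g_{\mathbb{S}^3}$; combined with the monotonicity formula lower bound on $|\Sigma_p\cap B_R|$ in small balls and the fact that one can pack $\gtrsim p$ disjoint such balls along $\Sigma_p$ (because the bounded directions, after rescaling by $p^2$, have diameter $\gtrsim p$ in two independent directions, giving $\gtrsim p^2$ disjoint balls), one gets $\mathcal{H}^2(\Sigma_p)\gtrsim p^2 \cdot (\text{const}/p^2)\cdot (\text{number of balls})$, and pushing the count high enough contradicts $\mathcal{H}^2(\Sigma_p)\le 2\pi$ in $g_{\mathbb{S}^3}$. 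The arithmetic dichotomy from Lemma \ref{subsequences} — either the orbits become dense (forcing the full rank drop because then $T_{\pi/4}/\mathbb{Z}_p^q$ shrinks and the whole Clifford torus collapses, which is incompatible with a genus $2$ or $3$ minimizer sitting at bounded scale) or the orbits are trapped in $b$ integral curves of a fixed Killing field (which gives exactly one bounded circle direction in the limit, i.e.\ rank $\le 1$) — is what I would invoke to organize the two cases cleanly.

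The main obstacle will be the careful bookkeeping that translates "rank $\ge 2$ in the limit" into a quantitative lower bound on how much area of $\Sigma_p$ must be present, and then matching that against the $2\pi$ ceiling uniformly in $q$; in particular one must make sure the monotonicity constant and the ball-packing count are uniform in $p$ and $q$, which is exactly where the extrinsic/intrinsic distance equivalence on $T_{\pi/4}/\mathbb{Z}_p$ (stated just before Lemma \ref{injective radius}, with constant independent of $p,q$) and the Cheeger-type volume lower bounds are needed. I expect the cleanest route is to reduce, via Lemma \ref{subsequences}, to the second alternative — orbits confined to finitely many integral curves of $X(z,w)=(m_0\sqrt{-1}z,n_0\sqrt{-1}w)$ — and then observe that the $\mathbb{Z}_p^q$-action in the rescaled metric degenerates to a single translation along the $X$-direction modulo a bounded group, so the limit $M$ is a flat bundle over a plane with one $S^1$ fiber direction, hence of rank exactly one; the first alternative is handled by showing orbit-density forces the minimizing surface to have collapsed, contradicting the uniform area lower bound from monotonicity.
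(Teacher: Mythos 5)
Your proposal does not follow the paper's route, and as written it has a genuine gap. The paper proves the lemma by a purely ambient computation: it foliates a neighbourhood of $x_p$ by the Clifford tori $T_u$ and, via the coarea formula applied to the distance function to the critical fibre, shows that $|B_{2R}(x_\infty)|\geq cR^2$ for every $R$ (treating separately the cases where the region $\Omega_{r_p}$ enclosed by the Clifford torus through $x_p$ stays bounded or blows up under the rescaling). Quadratic volume growth of balls immediately forces the flat limit to have rank at most one. No property of $\Sigma_p$ beyond the location of the basepoint is used --- and this is essential, because the lemma is also invoked in Lemma \ref{isoperimetric small volume} for surfaces whose limits are compact with small area, so an argument resting on $\Sigma_p$ spreading out over the manifold cannot be the right one.

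The specific steps that fail in your sketch are these. First, the inequality $|\Sigma_\infty\cap B_r|\geq\pi r^2$ from the monotonicity formula (Proposition \ref{monotonicity}) is only valid for $r$ below the injectivity radius and curvature scale; in a flat manifold of rank $\geq 2$ the injectivity radius is bounded above, so monotonicity cannot be iterated to large $r$, and indeed $T^2\times\mathbb{R}$ contains properly embedded noncompact CMC surfaces (flat cylinders) with only linear area growth --- there is no tension between a noncompact limit surface and a rank-two ambient space. Second, the packing count of $\gtrsim p^2$ disjoint balls centred on $\Sigma_p$ is unjustified: nothing forces the surface to sweep out the two putatively bounded directions, and the area budget $|\Sigma_p|\leq 2\pi p^2$ in the rescaled metric is in any case exactly of the order such a packing would produce, so no contradiction results. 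Third, your fallback via Lemma \ref{subsequences} treats the alternative of orbits confined to finitely many integral curves plausibly (and is close in spirit to how the paper identifies $M=\mathbb{S}^1\times\mathbb{R}^2$ in Case I, although the paper still invokes the rank lemma there), but the dense-orbit alternative is not handled: density of orbits does not by itself force the minimizer to collapse, and the lemma must still hold in that case. To repair the proof you would need the paper's idea: bound $|B_{2R}(x_\infty)|$ from below by integrating $|B_R(\gamma(u))\cap T_u|$ over an interval of $u$ of length $2R/p$, showing each Clifford-torus slice contributes at least $cR$.
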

\begin{proof}
Below we denote $T_r/\mathbb{Z}_p$ by $T_r$. Let $T_{r_p}$ be the Clifford torus through $x_p$ enclosing a region $\Omega_{r_p}$. Under the scaling by $\lambda_p=p^2$ we have that $|\Omega_{r_p}|= 2\pi^2 p^2 \sin^2(r_p)$.  If $\lim_{p\rightarrow\infty}|\Omega_{r_p}|<\infty$, then $\lim_{p\rightarrow \infty}|T_{r_p}|= 2\pi^2 p\sin(2r_p)<\infty$. Moreover, the second fundamental form $A_{r_p}$   of $T_{r_p}$ satisfies $\lim_{p\rightarrow \infty} |A_{r_p}|^2=\lim_{p\rightarrow \infty} \frac{1}{p^2}(\frac{\cos^2(r_p)}{\sin^2(r_p)}+ \frac{\sin^2(r_p)}{\cos^2(r_p)})<\infty$. The critical fiber $T_0\subset \Omega_{r_p}$ is distant from $x_p$ by $O(\frac{1}{p})$ since $\Omega_{r_p}$ is converging to a compact region in $M$. Instead of using base points $x_p$ we choose new base points $y_p \in T_0$; it follows that $(L(p,q),p^2g_{\mathbb{S}^3},y_p)\rightarrow (N,\delta,y_{\infty})$  and $\text{rank}(N)=\text{rank}(M)$.  
We claim that rank of $N$ is at most one:
\[|B_{2R}(y_{\infty})|=\lim_{p\rightarrow \infty} |B_{2R}^p(y_p)|\geq \lim_{p\rightarrow \infty}|\Omega_{\frac{R}{p}}|=\lim_{p \rightarrow \infty} 2\pi^2 p^2 \sin^2(\frac{R}{p})= c R^2.\]
Let's assume now that $\lim_{p\rightarrow \infty}|\Omega_{r_p}|=\infty$, consequently $\lim_{p\rightarrow \infty}|T_{r_p}|=\infty$ and $\lim_{p\rightarrow \infty}|A_{r_p}|^2=0$. 
Recall the function $r=r(x)$, the distance from the Clifford torus through $x$ to the  critical fiber $T_0$ with respect to the round metric. The unit vector field $\partial r$ is  orthogonal to $T_r$ for every $r$ and it is well  defined on $L(p,q)-\{T_0\cup T_{\frac{\pi}{2}}\}$. Let $\gamma(r)$ be the geodesic whose velocity is $\partial r$ and such that $\gamma(r_p)=x_p$. Consider $K_{r_p,R}=\{ x \in T_r: d_{L(p,q)}(x,\gamma(r))\leq \frac{R}{p} \, \text{and}\, |r-r_p|\leq \frac{R}{p}\}$. By the triangle inequality $K_{r_p,R}\subset B_{2R}(x_p)$ under the metric $p^2g_{\mathbb{S}^3}$. Applying the coarea formula for $f(r)=p\,r$, $|\nabla f|_{p^2g_{\mathbb{S}^3}}=1$, we obtain:
\begin{eqnarray*}|K_{r_p,R}|
=\int_{r_p-\frac{R}{p}}^{r_p+\frac{R}{p}}|B_R(\gamma(u))\cap T_u|p\,du 
=|B_R(\gamma(u_0))\cap T_{u_0}|_{p^2g_{\mathbb{S}^3}}R \geq c R^2,
\end{eqnarray*}
where $u_0\in [r_p- \frac{R}{p}, r_p + \frac{R}{p}]$ is from the mean value theorem for integrals. 
The last inequality is justified as follows. Either the extrinsic diameter of $T_{u_0}$ is going to infinity and  $T_{u_0}$ is converging with multiplicity to a flat surface or the extrinsic diameter of $T_{u_0}$ is bounded. The former implies that $|B_R(\gamma(u_0))\cap T_{u_0}|_{p^2g_{\mathbb{S}^3}}\geq c R$. The latter  implies that  $B_R(\gamma(u_0))\cap T_{u_0}=T_{u_0}$, which is a contradiction since $|T_{u_0}|_{p^2g_{\mathbb{S}^3}}\rightarrow \infty$. We have conclude that $Vol(B_{2R})\geq c R^2$ and rank of $M$ is at most one.
\end{proof} 
 
The following lemma gives a description of $I_{L(p,q)}$ for small volumes:

\begin{lemma}\label{isoperimetric small volume}
\textit{For $p$ large enough there exist $v_p$ and $\varepsilon_p>0$ such that $I_{L(p,q)}$ is given by the profile of spheres on $(0,v_p]$ and by the profile of flat tori on $[v_p,v_p+\varepsilon_p)$. Moreover, if $\Sigma_p$ is an isoperimetric surface such that $I_{L(p,q)}(v_p)=|\Sigma_p|$, then $g(\Sigma_p)=0\, \text{or}\, 1$.}
\end{lemma}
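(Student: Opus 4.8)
The plan is to analyze the isoperimetric profile $I_{L(p,q)}$ near $v=0$ by comparing it with known profiles and using the scaling behavior of small balls. For very small volumes $v$, the work in \cite{MJ} (boundaries of small isoperimetric regions are nearly round spheres) combined with the uniform lower bound $\text{inj}_x L(p,q)\geq \frac{\pi}{p}$ shows that, once $v$ is small relative to a power of $\frac{1}{p}$, isoperimetric regions are small geodesic balls, so $I_{L(p,q)}$ coincides with the geodesic-sphere profile on an initial interval $(0,v_p]$. The more substantial point is to show that this interval can be taken so that, immediately after $v_p$, the flat-tori profile takes over. The natural candidate competitors are the Clifford tori $T_r/\mathbb{Z}_p$ and the geodesic spheres; I would compare their areas as functions of enclosed volume. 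Using the explicit formulas (area of $T_r/\mathbb{Z}_p$ equal to $\frac{2\pi^2}{p}\sin(2r)$ enclosing volume $\frac{2\pi^2}{p}\sin^2 r$, against the geodesic-sphere profile of $\mathbb{S}^3$, truncated to volumes below $\frac{\text{Vol}(L(p,q))}{2}$), one checks that for $p$ large the sphere profile lies below the torus profile for small volumes but the two cross at some volume $v_p$, after which the Clifford torus becomes the strictly better competitor for a short interval $[v_p,v_p+\varepsilon_p)$. Since $I_{L(p,q)}$ is the lower envelope of all competitors, it must equal the sphere profile on $(0,v_p]$ and the flat-torus profile on $[v_p,v_p+\varepsilon_p)$ — here one also invokes the symmetrization results of \cite{MHH} and the classification on $\mathbb{S}^2(\tfrac12)\times\mathbb{S}^1(\tfrac1p)$ from \cite{PR}, as in the discussion preceding the lemma, to guarantee no third type of surface beats both in this range.

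For the topological conclusion, suppose $\Sigma_p$ is isoperimetric with $I_{L(p,q)}(v_p)=|\Sigma_p|$. By the stability of $\Sigma_p$ and Ros--Ritor\'e (Theorem \ref{up bound w energy, g=2}), $g(\Sigma_p)\leq 3$, and if $g(\Sigma_p)=2$ or $3$ then $\big(1+H^2\big)|\Sigma_p|\leq 2\pi$ by Proposition \ref{wilmore energy genus 2}(1). On the other hand, at the volume $v_p$ the value $I_{L(p,q)}(v_p)$ equals the common value of the sphere and torus profiles, which for $p$ large is comparable to the area of a small sphere/Clifford torus and in particular tends to $0$ (it is $O(\tfrac1p)$ in the rescaled picture, hence bounded in terms of $p$ in a way one makes precise); combined with the derivative bounds \eqref{first derivative} relating $H$ to $(I_{L(p,q)})'_\pm(v_p)$, this would force $\big(1+H^2\big)|\Sigma_p|$ to be incompatible with the sphere/torus being the actual minimizer unless $g(\Sigma_p)\in\{0,1\}$. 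Concretely: a genus $2$ or $3$ minimizer at volume $v_p$ would have area $|\Sigma_p|=I_{L(p,q)}(v_p)$ equal to the sphere profile value there, yet the Willmore-type bound $\big(1+H^2\big)|\Sigma_p|\leq 2\pi$ together with the first-derivative estimate on $H$ gives a lower bound on $|\Sigma_p|$ that contradicts its being this small for $p$ large. Hence $g(\Sigma_p)=0$ or $1$.

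The main obstacle I anticipate is making the crossover argument fully rigorous: one must control $I_{L(p,q)}$ on a genuine interval (not just pointwise) and verify that neither geodesic spheres nor Clifford tori are beaten there by some other competitor — in particular ruling out higher-genus surfaces directly in the small-volume regime, and handling general $q$ where the clean symmetrization of \cite{MHH},\cite{PR} is unavailable. I expect this to be dealt with by combining the lower bound $I_{L(p,q)}\geq I_{\mathbb{S}^2(1/2)\times\mathbb{S}^1(1/p)}$ (for $q=1$) or a direct comparison with $\mathbb{S}^3$'s profile for small volumes (valid for all $q$ since small balls do not see the quotient beyond the injectivity-radius scale), together with the continuity and one-sided differentiability of $I_{L(p,q)}$ recorded in \eqref{first derivative}–\eqref{second derivative}. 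The topological consequence, once the profile description is in hand, then follows cleanly from Proposition \ref{wilmore energy genus 2} and the area–volume–mean-curvature relations.
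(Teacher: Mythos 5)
Your proposal has genuine gaps in both halves, and the second half rests on an argument that does not work as stated. For the topological conclusion you invoke the Ros--Ritor\'e bound $\big(1+H^2\big)|\Sigma_p|\leq 2\pi$ for genus $2$ or $3$ and claim this is ``incompatible'' with $|\Sigma_p|=I_{L(p,q)}(v_p)=O(1/p^2)$. But that inequality is an \emph{upper} bound on $\big(1+H^2\big)|\Sigma_p|$, so a small area makes it \emph{easier} to satisfy, not harder; to contradict it you would need a lower bound $H\gtrsim p$, and the only derivative information available, $(I_M)'_+(v_p)\leq 2H\leq (I_M)'_-(v_p)$, gives $H\lesssim p$ from the left (sphere) side and nothing from the right side, since the profile to the right of $v_p$ is exactly what is not yet known. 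The paper's mechanism is entirely different: it rescales the metric by $p^2$, uses the uniform second-fundamental-form bound (Lemma \ref{bound  second fundmental form lemma}) and the strong multiplicity-one compactness for isoperimetric surfaces (Corollary \ref{compactness iso}) to extract a limit $\Sigma$ in a flat manifold of rank at most one (Lemma \ref{rank one}); the bound $I_{L(p,q)}(v_p)=O(1/p^2)$ makes $\Sigma$ a \emph{closed} finite-area surface via the monotonicity formula, its preimage in $\mathbb{R}^3$ lies in a solid cylinder, and Alexandrov's theorem together with Korevaar--Kusner--Solomon (Theorem \ref{kks}) forces $g(\Sigma)=0$ or $1$; since graphical multiplicity-one convergence preserves topology, $g(\Sigma_p)=0$ or $1$. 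This blow-up step is the missing idea, and it is also what ultimately yields $v_p=v_*$ and the existence of $\varepsilon_p$.

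The first half of your proposal is also not closed. The paper itself points out that the symmetrization comparison $I_{L(p,1)}\geq I_{\mathbb{S}^2(1/2)\times\mathbb{S}^1(1/p)}$ is \emph{not} enough to determine the profile (the sphere profiles of the two spaces differ), and that no analogue exists for general $q$; so your appeal to \cite{MHH} and \cite{PR} to ``guarantee no third type of surface'' is exactly the step that cannot be made to work, as you partly acknowledge. Note also that the paper does not define $v_p$ as the crossing point of the two explicit profiles: it defines $v_p$ as the first volume at which minimizers stop being spheres, observes only the soft inequality $v_p\leq v_*$ (whence $I_{L(p,q)}(v_p)=O(1/p^2)$, the input to the blow-up), and recovers $v_p=v_*$ as a \emph{conclusion}. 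Reorganizing your argument around that definition, and replacing the Willmore-bound step by the rescaling/compactness/cylinder argument, is what is needed.
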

\begin{proof} 
For each $p$ we consider the first volume, $v_p$, for which there is  transition on topology of isoperimetric surfaces from spheres to something else. 
If $v_*$ is the volume for which the profile of geodesic spheres intersect the profile of flat tori, then  $v_p\leq v_*$.  The value of $v_*$ is computed by solving the following system of equations:
\begin{eqnarray*}
 \frac{2\pi^2}{p} \sin^2(r)= 2\pi s - \pi \sin (2s)\quad \text{and}\quad \frac{2\pi^2}{p} \sin(2r)=4\pi \sin^2(s). 
\end{eqnarray*}
The  left hand sides (right hand sides) correspond to the enclosed volume and area of the Clifford torus $T_r$ (geodesic spheres $S_s$ of radius $s$), respectively.
It follows that  $s\leq \frac{\pi}{p}$; another way to see this is by recalling that the injectivity radius of $L(p,1)$ is $\frac{\pi}{p}$ at every point. Therefore, $I_{L(p,q)}(v_p)\leq O(\frac{1}{p^2})$. 

Let $\Sigma_p$ be  an isoperimetric surface  with the property that $I_{L(p,q)}(v_p)=|\Sigma_p|$.  By Lemma \ref{bound  second fundmental form lemma} the sequence $\{\Sigma_p\}_{p\in\mathbb{N}}$ has bounded second fundamental form in $(L(p,q), p^2 g_{\mathbb{S}^3},x_p)$; thus, it strongly converges to a  properly embedded surface $\Sigma$ of finite area in some orientable flat three manifold $(M,\delta)$ of rank at most one by Lemma \ref{rank one}. By the monotonicity formula, Proposition  \ref{monotonicity} in the Appendix, $\Sigma$  is a closed  surface. 
It follows that the pre-image $\widehat{\Sigma}$ of $\Sigma$ in $\mathbb{R}^3$ is contained in a solid cylinder. Hence,  $\widehat{\Sigma}$ is an union of round spheres by Alexandrov's Theorem or is a surface of revolution about the axis of the cylinder by Theorem \ref{kks} in the Appendix.
Therefore,   $g(\Sigma)=0$ or $1$.
From the strong  compactness for isoperimetric surfaces, see Corollary \ref{compactness iso}, we have that 
$g(\Sigma_p)=0\, \text{or}\, 1$, $v_p=v_{*}$ and the existence of the desired $\varepsilon_p>0$.
\end{proof}

\begin{claim}
\textit{Theorem \ref*{theorem 1} follows if we can show that the isoperimetric surfaces separating $L(p,q)$ in two regions of the same volume are tori.}
\end{claim}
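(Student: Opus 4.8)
All the structure already in place forces the issue to a single topological question. By Theorem \ref{up bound w energy, g=2} and the stability of isoperimetric surfaces, an isoperimetric surface $\Sigma\subset L(p,q)$ is connected with $g(\Sigma)\in\{0,1,2,3\}$; if $g(\Sigma)=0$ it is a geodesic sphere by Hopf's theorem, and if $g(\Sigma)=1$ it is flat by \cite{RR}, hence --- being a flat constant mean curvature surface --- a Clifford torus in the universal cover, and therefore a torus of revolution about a geodesic in $L(p,q)$. So Theorem \ref{theorem 1} is precisely the assertion that, for $p\ge p_0$, no isoperimetric surface of $L(p,q)$ has genus $2$ or $3$, and the Claim says this follows once we know it at the single volume $\tfrac12\mathrm{Vol}(L(p,q))$.

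\textbf{Plan.} I would work inside the contradiction hypothesis of the proof of Theorem \ref{theorem 1}: a sequence $L(p,q)$, $p\to\infty$, each carrying an isoperimetric surface $\Sigma_p$ of genus $2$ or $3$ enclosing volume $t_p$. Since passing to complements leaves the boundary, hence its genus, unchanged, $I_{L(p,q)}(v)=I_{L(p,q)}(\mathrm{Vol}(L(p,q))-v)$, and we may assume $t_p\le\tfrac12\mathrm{Vol}(L(p,q))$. Lemma \ref{isoperimetric small volume} gives, for $p$ large, a volume $v_p$ below which only spheres and flat tori occur, so $t_p>v_p+\varepsilon_p$. I would then slide $t_p$ upward: let $\widehat t_p\le\tfrac12\mathrm{Vol}(L(p,q))$ be the largest volume admitting an isoperimetric region whose boundary has genus $2$ or $3$. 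By the strong compactness for isoperimetric surfaces (Corollary \ref{compactness iso}) together with the uniform curvature estimate obtained as in Lemma \ref{bound  second fundmental form lemma} (the blow-up of a genus-$\ge 2$ isoperimetric surface along which $|A|\to\infty$ would be a non-flat properly embedded stable surface in $\mathbb R^3$, which is impossible), this supremum is attained and the genus does not drop, so there is a genus-$2,3$ isoperimetric surface $\widehat\Sigma_p$ at volume $\widehat t_p$; and if $\widehat t_p<\tfrac12\mathrm{Vol}(L(p,q))$ then isoperimetric surfaces just above $\widehat t_p$ have genus $\le 1$, so $I_{L(p,q)}$ agrees there with the minimum of the sphere and flat-torus profiles and, by continuity, $|\widehat\Sigma_p|$ equals that value. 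Now I would rescale by $p^2$, choose the base points so that the limit retains a handle of $\widehat\Sigma_p$, and apply Lemmas \ref{bound  second fundmental form lemma} and \ref{rank one}: the $\widehat\Sigma_p$ subconverge, graphically and with multiplicity one, to a properly embedded stable flat surface of infinite area in an orientable flat $3$-manifold of rank at most one; such a surface is a union of planes and flat cylinders, all of genus $0$, which contradicts the surviving handle. Hence $\widehat t_p=\tfrac12\mathrm{Vol}(L(p,q))$: there is an isoperimetric surface of genus $2$ or $3$ splitting $L(p,q)$ into equal halves, contrary to the hypothesis of the Claim.

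\textbf{Main obstacle.} The heart of the matter is the rescaled blow-up: guaranteeing that the nontrivial topology of $\widehat\Sigma_p$ persists in the limit (this is where the uniform second-fundamental-form bound and the choice of base points enter), that the limit is genuinely a properly embedded stable surface of infinite area, and that such limits in a rank-$\le 1$ flat $3$-manifold are forced to be planes and cylinders so that minimization excludes them --- exactly the mechanism announced in the introduction's sketch. The sliding argument and the symmetry about the half-volume are comparatively soft; the content of the Claim is that they let one concentrate all of this analysis at $v=\tfrac12\mathrm{Vol}(L(p,q))$, where one additionally has the Steiner symmetrization of \cite{MHH} available when $q=1$.
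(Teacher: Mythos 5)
Your reduction does not follow the paper's route, and as written it has a genuine gap. The paper proves the Claim by a \emph{profile comparison}: the hypothesis at half volume, together with Corollary \ref{compactness iso}, forces $I_{L(p,q)}$ to coincide with the Clifford-torus profile $f(v)=|T_{r(v)}|$ on $[\widehat v,\tfrac{\pi^2}{p}]$, while Lemma \ref{isoperimetric small volume} gives the same on $[v_p,v_p+\varepsilon_p]$; hence $\phi=f-I_{L(p,q)}\ge 0$ has an interior maximum $t_*$ in the intermediate range, and there the weak second-order inequality (\ref{second derivative area}) applied to both $f$ and the normal-variation area $h$ of an isoperimetric $\Sigma$ at $t_*$, combined with the Gauss equation and Gauss--Bonnet, yields $1+H^2+\tfrac{2\pi(g-1)}{h}\le 1+H^2$, i.e. $g(\Sigma)\le 1$ at \emph{every} intermediate volume. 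This ODE-comparison step is the entire content of the Claim, and your proposal contains no substitute for it.

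Your substitute --- sliding the exceptional volume $\widehat t_p$ up to $\tfrac12\mathrm{Vol}(L(p,q))$ by running the rescaled blow-up at $\widehat t_p$ --- fails for two reasons. First, it is circular in structure: the blow-up analysis (Cases I and II of the paper) is precisely the machinery that the Claim exists to confine to half volume; if it worked at an arbitrary $\widehat t_p$ it would rule out the genus $2,3$ surface there directly and no reduction would be needed. It does not work there as stated: the conclusion that $\Sigma_\infty$ has infinite area uses that \emph{both} complementary regions have rescaled volume tending to infinity, and the Case II argument uses $|\Omega_p'|=\pi^2$ to pin down the limit torus as $T_{\pi/4}$ and to get multiplicity one by comparison with $|T_{\pi/4}|$. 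Second, and more seriously, your final contradiction --- ``the limit is a union of planes and cylinders, all of genus $0$, which contradicts the surviving handle'' --- is not valid. Graphical multiplicity-one convergence to a non-compact limit does not preserve genus: handles can drift to infinity or collapse, and no choice of base points guarantees that one survives (the paper chooses base points via Poincar\'e--Hopf only to make $\Sigma_\infty$ orthogonal to the fibers, not to retain topology). The paper's actual contradiction with the configuration of parallel planes is an explicit volume-preserving, area-decreasing cut-and-paste transplanted back to $\Sigma_p$, not a topological one. To repair your argument you would need to replace the sliding step by the profile comparison at an interior maximum, which is the paper's proof.
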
 
\begin{proof}
By the strong compactness for isoperimetric surfaces, Corollary \ref{compactness iso}, there exists $\widehat{v}$ such that if $\Sigma$ is an isoperimetric surface enclosing volume $v \in [\widehat{v}, \frac{\pi^2}{p}]$, then $\Sigma$ is a flat torus.
It follows from   Lemma \ref{isoperimetric small volume}   and for large $p$ that the isoperimetric profile $I_{L(p,q)}$   is  given by the area of geodesic spheres for volumes in  $(0,v_p]$ and  by the area of flat tori for volumes in $[v_p,\varepsilon_p]\cup [\widehat{v},\frac{\pi^2}{p}]$. In other words, if
 $f(v)$ is the function defined by $f(v)=|T_{r(v)}|$, where
$T_{r(v)}$ is the Clifford torus enclosing a volume equal to $v$, then $I_{L(p,q)}(v)=f(v)$  on $[v_p,\varepsilon_p]\cup [\widehat{v},\frac{\pi^2}{p}]$.
It follows that $\phi(v)=f(v)-I_{L(p,q)}(v)$ has a   local maximum point  at $t_*\in [\varepsilon_p,\widehat{v}]$. 

The claim will follow by exploring the weak differential equation for $I_{L(p,q)}$.  From (\ref{second derivative area}) we have 
\[f^2(v)f^{\prime \prime}(v)+ \int_{T_{r(v)}}(2+ |A_{r(v)}|^2)\,d_{T_{r(v)}}=0.\] 
Let $\Sigma$ be an isoperimetric surface  such that $I_{L(p,q)}(t_*)=|\Sigma|$. If $\Sigma_v$ is the unit normal variation of $\Sigma$ parametrized by the enclosed volume, then we define  $h(v)= \textit{Area}(\Sigma_v)$. Since $h\geq I_{L(p,q)}$, we have that  $\phi_1= f- h$  has also  a local  maximum point at $t_*$. Hence,  $\phi_1^{\prime}(t_*)=0$, i.e., $H_r=H$, and $\phi_1^{\prime\prime}(t_*)\leq 0$. Applying equation (\ref{second derivative area}) for $h$ together with the Gauss equation and the Gauss-Bonnet theorem we obtain:
\begin{eqnarray*}
\phi_1^{\prime\prime}(t_*)\leq 0 &\Rightarrow & \frac{1}{h^2}\int_{\Sigma}(2+|A|^2)\,d_{\Sigma}\leq \frac{1}{f^2}\int_{T_{r(v)}}(2+ |A_{r(v)}|^2)\,d_{T_{r(v)}}\\
&\Rightarrow& 4 (1+H^2)\,h + 8\pi\,(g-1)\,\leq\, \big(4(1+H_r^2)\,f\big)\,\frac{h^2}{f^2}\\
&\Rightarrow&  1+H^2 + \frac{2\pi(g-1)}{h}\leq (1+H_r^2)\,\frac{h}{f}\leq 1+H^2.
\end{eqnarray*}
Therefore, $g(\Sigma)=1$ and $I_{L(p,q)}=f$ in $[v_p, \frac{\pi^2}{p}]$.
\end{proof} 

By Lemma \ref{subsequences}, the proof of Theorem \ref*{theorem 1} reduces to investigating the Cases I and II below.

\begin{flushleft} 
Case I: There is a subsequence  whose $\mathbb{Z}_{p}^q$ orbit of a point  is contained in a finite number (independent of $p$) integral curves of  a  vector field  $X(z,w)=(m_0\,\sqrt{-1}\,z, n_0\,\sqrt{-1}\,w)\in \mathcal{X}(\mathbb{S}^3)$.
\end{flushleft}
We claim that the injectivity radius of $L(p,q)$ at every point is $O(\frac{1}{p})$. Indeed,  let  $\text{Orbit}_{p,q}(z_0,w_0)$ be the orbit of $(z_0,w_0)\in \mathbb{S}^3$ with respect to $\mathbb{Z}_p^q$. As before,  $\frac{p}{b}$ points of $\text{Orbit}_{p,q}(z_0,w_0)$  lie on  the curve $\beta(t)=\psi(z_0,w_0,t)$.
Here, $\psi$ is the one parameter family  of diffeomorphisms associated to $X$:
\begin{eqnarray*}\psi: \mathbb{S}^3\times \mathbb{R} \rightarrow \mathbb{S}^3 :
\psi(z,w,t)=(e^{m_0 i t}z,e^{n_0 i t}w).\end{eqnarray*}
When ordered according the orientation of $\beta(t)$, those points determine a piecewise closed geodesic $\gamma_p(t)$ with $\gamma_p(0)=(z_0,w_0)$. As $p \rightarrow \infty$,  $\gamma_p(t)$ converges to $\beta(t)$. The claim now follows from :
\begin{equation}\label{injective radius}
 \lim_{p \rightarrow \infty}\frac{p}{b} inj_{(z_0,w_0)} L(p,q) \leq \lim_{p \rightarrow \infty}\mathcal{H}^1(\gamma_p)= 2\pi \sqrt{m_0^2|z_0|^2+n_0^2|w_0|^2}.
\end{equation}
 
By the Cheeger-Gromov Compactness Theorem we obtain, up to subsequence, the following convergence: 
\begin{equation}\label{convergence}
(L(p,q), \,p^2 \cdot g_{\mathbb{S}^3},\, x_p) \xrightarrow{C-G} (M,\, \delta,\, x_{\infty}),
\end{equation}
where $(M,\,\delta, \, x_{\infty})$ is a flat three manifold of rank one by Lemma \ref{rank one}.  
By (\ref{injective radius}) we conclude that $M=\mathbb{S}^1(r_0)\times \mathbb{R}^2=\mathbb{R}^3/\mathbb{Z}$. 

The  curves $t\rightarrow \beta(t)=\psi(x,t)$ represent  integral curves of $X$  through  $x \in L(p,q)$; they have bounded geodesic curvature  and    $\mathcal{H}^1(\beta)=O(\frac{1}{p})$.
Hence, the integral curves of $X$ converge to closed geodesics in $M$ under (\ref{convergence}). As sets, they  coincide with the standard fibers of $M=\mathbb{S}^1\times \mathbb{R}^2$.

By the Poincare-Hopf index theorem  there exists a zero  for the vector field $\frac{X^{\top}}{|X|}\in \mathcal{X}(\Sigma_p)$ since $g(\Sigma_p)\geq 2$. Hence, we can choose the base points $x_p$  to satisfy  $g_{\mathbb{S}^3}(\frac{X}{|X|}(x_p), N(x_p))=\pm 1$, here $N$ is the unit normal vector of $\Sigma_p$.  
\begin{lemma}
There exists  a properly embedded surface $\Sigma_{\infty} \subset M$ such that $(\Sigma_p, x_p)\rightarrow (\Sigma_{\infty},x_{\infty})$ with multiplicity one. Moreover, $\Sigma_{\infty}$ is totally geodesic and perpendicular to the standard fibers of $\mathbb{S}^1\times \mathbb{R}^2$.
\end{lemma}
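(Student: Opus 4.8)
The plan is to feed the sequence $\{\Sigma_p\}$ into the strong compactness theorem for isoperimetric surfaces and then exploit the flat product structure of the limit $M=\mathbb{S}^1(r_0)\times\mathbb{R}^2$. By Lemma \ref{bound second fundmental form lemma} the $\Sigma_p$ have uniformly bounded second fundamental form in $(L(p,q),p^2g_{\mathbb{S}^3})$, so Corollary \ref{compactness iso} in the Appendix produces, after passing to a subsequence, a properly embedded surface $\Sigma_\infty\subset M$ with $(\Sigma_p,x_p)\to(\Sigma_\infty,x_\infty)$ in the sense of graphs and with multiplicity one; moreover $\Sigma_\infty$ is a weakly stable CMC surface, i.e. $I_{\Sigma_\infty}(f,f)\ge 0$ for every $f\in C_0^\infty(\Sigma_\infty)$ with $\int_{\Sigma_\infty}f=0$. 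This already gives the first assertion. Since $M$ is flat we have $Ric_M\equiv 0$, which simplifies all the curvature terms below.

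I would then run the compact/non-compact dichotomy exactly as in the proof of Lemma \ref{isoperimetric small volume}. By the monotonicity formula (Proposition \ref{monotonicity} in the Appendix), $\Sigma_\infty$ is either closed or has infinite area and infinite extrinsic diameter. If it were closed, then lifting through the Riemannian universal cover $\mathbb{R}^3=\mathbb{R}\times\mathbb{R}^2\to M$ gives a complete, properly embedded CMC surface $\widehat{\Sigma}_\infty$ contained in a solid cylinder $\mathbb{R}\times K$, where $K\subset\mathbb{R}^2$ is the compact projection of $\Sigma_\infty$; hence $\widehat{\Sigma}_\infty$ is a union of round spheres by Alexandrov's theorem or a surface of revolution about the axis of the cylinder by Theorem \ref{kks} in the Appendix, so $g(\Sigma_\infty)\in\{0,1\}$. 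As graphical multiplicity-one convergence preserves the diffeomorphism type of a closed limit, this contradicts $g(\Sigma_p)\ge 2$. Therefore $\Sigma_\infty$ is non-compact.

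For the totally geodesic property I would invoke Da Silveira's Theorem \ref{silveira} in the Appendix. If that result is available for weakly stable CMC surfaces in orientable flat $3$-manifolds of rank at most one, it applies to $\Sigma_\infty\subset M$ directly. Otherwise one passes to the universal cover $\mathbb{R}^3\to M$ and applies it to the components of $\widehat{\Sigma}_\infty$, which are complete, properly embedded CMC surfaces in $\mathbb{R}^3$; here one first removes the zero-average restriction in the stability inequality of $\Sigma_\infty$ by adding a small balancing bump on an end — using that a bounded second fundamental form in $\mathbb{S}^1\times\mathbb{R}^2$ forces at most quadratic area growth, so logarithmic cut-offs have vanishing Dirichlet energy — and this unrestricted stability passes to the amenable $\mathbb{Z}$-cover. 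Either way $\Sigma_\infty$ is totally geodesic. It remains to identify the normal direction: the base points were chosen so that $g_{\mathbb{S}^3}\big(\tfrac{X}{|X|}(x_p),N(x_p)\big)=\pm1$, and since $X/|X|$ converges to the unit parallel field $\partial_\theta$ tangent to the fibers while the unit normals of $\Sigma_p$ converge to that of $\Sigma_\infty$, we get $\langle\partial_\theta,N\rangle(x_\infty)=\pm1$. Because $\Sigma_\infty$ is totally geodesic its unit normal is $\nabla^M$-parallel along $\Sigma_\infty$, and $\partial_\theta$ is parallel on $M$, so $\langle\partial_\theta,N\rangle$ is constant $=\pm1$; hence $N\equiv\pm\partial_\theta$ and $\Sigma_\infty$ is everywhere orthogonal to the fibers of $\mathbb{S}^1\times\mathbb{R}^2$.

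The step I expect to be the main obstacle is the non-compact case, namely turning the weak (mean-zero) stability of $\Sigma_\infty$ into the input required by Da Silveira's theorem for the components of $\widehat{\Sigma}_\infty\subset\mathbb{R}^3$; this is where non-compactness, properness, and the quadratic area growth coming from the bounded curvature all enter. A close second is the closed case, where the real content is that graphical multiplicity-one convergence preserves topology, so that the classification of embedded CMC surfaces in a solid cylinder clashes with $g(\Sigma_p)\ge 2$. As a fallback for the totally geodesic step, once the stability inequality on $\Sigma_\infty$ is available without the average constraint one can also argue directly: the function $u:=\langle\partial_\theta,N\rangle$ solves the Jacobi equation $\Delta u+|A|^2u=0$ (using $Ric_M\equiv0$), satisfies $|u|\le1$ with equality at $x_\infty$, and testing stability against $\eta\,(1-u)$ with logarithmic cut-offs $\eta$ yields $\int_{\Sigma_\infty}|A|^2(1-u)=0$, forcing $u\equiv1$ and $A\equiv0$.
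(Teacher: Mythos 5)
Your proposal is correct and follows the paper's strategy in all but one step: the same chain (uniform bound on $|A_{\Sigma_p}|$ from Lemma \ref{bound  second fundmental form lemma}, then Corollary \ref{compactness iso} for multiplicity-one graphical convergence to a properly embedded stable CMC limit, then Da Silveira's Theorem \ref{silveira} for total geodesy, then the choice of base points with $g_{\mathbb{S}^3}(\tfrac{X}{|X|}(x_p),N(x_p))=\pm 1$ for orthogonality to the fibers). Where you diverge is in ruling out a compact limit: the paper observes that $\Sigma_p$ separates $L(p,q)$ into two regions of equal volume tending to infinity, so a finite-area (hence bounded-diameter, by monotonicity) limit is impossible; you instead lift a hypothetical closed $\Sigma_\infty$ to the universal cover, apply Alexandrov and Korevaar--Kusner--Solomon to force $g(\Sigma_\infty)\le 1$, and contradict the preserved genus $\ge 2$. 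Your route is the one the paper itself uses in Lemma \ref{isoperimetric small volume}, so it is perfectly admissible here; the paper's version is shorter and uses the equal-volume normalization already secured by the preceding Claim, while yours is more robust in that it does not rely on that normalization. Two smaller remarks: your hedging about Da Silveira's theorem is unnecessary, since the version in the Appendix already produces a compactly supported test function with $\int_{\Sigma_\infty} f\,d_{\Sigma_\infty}=0$, so weak (volume-preserving) stability of $\Sigma_\infty$ is exactly the required input and no balancing bumps or passage to the $\mathbb{Z}$-cover is needed; and your parallel-transport argument for $N\equiv\pm\partial_\theta$ is a correct and slightly more explicit rendering of the paper's one-line conclusion (to get all components of $\Sigma_\infty$ orthogonal to the fibers, note that any totally geodesic component not orthogonal to the fibers would intersect the slice through $x_\infty$, contradicting embeddedness).
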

\begin{proof}
By  Lemma \ref{bound  second fundmental form lemma} the sequence of isoperimetric surfaces $(\Sigma_p, x_p)\subset (L(p,q),p^2 g_{\mathbb{S}^3},x_p)$ has uniformly bounded second fundamental form.
Applying the strong compactness theorem for isoperimetric surfaces,  Corollary \ref{compactness iso},  we conclude that $\Sigma_p$
converges smoothly and with multiplicity one to a properly embedded  stable CMC surface $\Sigma_{\infty}\subset \mathbb{S}^1\times \mathbb{R}^2$.
If $\liminf_{p\rightarrow \infty} |\Sigma_p|_{p^2g_{\mathbb{S}^3}}<\infty$,
then the monotonicity formula, Proposition \ref{monotonicity}, implies that the extrinsic diameter of $\Sigma_p$ and $\Sigma_{\infty}$ are bounded.  This is impossible since the sequence $\Sigma_p$ separates $L(p,q)$ in two regions of the same volume that goes to infinity as $p\rightarrow\infty$.
Therefore,  $\Sigma_{\infty}$ is a complete properly embedded stable CMC surface in $\mathbb{S}^1\times \mathbb{R}^2$ with infinite area.
Applying Da Silveira's Theorem \ref{silveira} once more, we obtain that $\Sigma_{\infty}$ is totally geodesic.
As $g_{\mathbb{S}^3}(\frac{X}{|X|}(x_p), N(x_p))=\pm 1$, we conclude that $\Sigma_{\infty}$ is orthogonal to the standard fibers of $\mathbb{S}^1\times \mathbb{R}^2$.
\end{proof}
We claim that $\Sigma_{\infty}$ separates
$\mathbb{S}^1\times \mathbb{R}^2$. If it does not separate, then there exists a loop $\gamma$ intersecting $\Sigma_{\infty}$  at a single point. As $\Sigma_p \rightarrow \Sigma_{\infty}$ with multiplicity one,  the same conclusion holds for $\Sigma_p$, which  contradicts  the fact that $\Sigma_p$ separates $L(p,q)$. Therefore, there is $k\geq 1$ such that  $\Sigma_{\infty}= \partial \Omega_{\infty}=\bigcup_{i=1}^{2k} \sigma_i$, where $\sigma_i$ is a flat plane for each $i$. 
\begin{claim}
\textit{This configuration cannot be a limit of isoperimetric surfaces.}
\end{claim}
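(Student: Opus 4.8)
The plan is to contradict the minimizing property of $\Omega_p$ by exhibiting, for $p$ large, a competitor region $\widetilde\Omega_p\subset L(p,q)$ with $|\widetilde\Omega_p|=|\Omega_p|$ and $|\partial\widetilde\Omega_p|<|\Sigma_p|$. Two structural facts are used. First, $X$ integrates, on $\mathbb{S}^3$ and hence on $L(p,q)$, to a one-parameter group of ambient isometries $\psi^X_t(z,w)=(e^{m_0it}z,e^{n_0it}w)$, which descends to $L(p,q)$ because it commutes with $\mathbb{Z}_p^q$. Second, $\Sigma_\infty$ is a finite union of parallel totally geodesic planes orthogonal to the $X$-orbits, so $\Omega_\infty$ is, near $x_\infty$, a union of ``horizontal slabs'' in $M=\mathbb{S}^1\times\mathbb{R}^2$, the $\sigma_i$ being level sets of the limiting isometric flow.

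By the Cheeger--Gromov convergence and the strong compactness for isoperimetric surfaces (Corollary \ref{compactness iso}), there is a scale $R=R_p\to\infty$ such that, in the rescaled metric, $\Sigma_p\cap B_R(x_p)$ consists of $2k$ pieces which are $C^1$-small graphs over $\sigma_1\cap B_R,\dots,\sigma_{2k}\cap B_R$, and $\Omega_p\cap B_R$ is the corresponding union of slab-pieces, each of bounded fiberwise thickness. Fix one such slab, bounded by two of these sheets, of thickness $a$. Using $\psi^X_t$ I slide, over the central ball $B_{R-\delta}(x_p)$ with $\delta=\sqrt R$, the two bounding sheets together until they coincide and cancel (they carry opposite normals), interpolating back to the unmodified $\Sigma_p$ across the collar $B_R\setminus B_{R-\delta}$. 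The bulk of this slide is by an isometry, hence perimeter-neutral; the two cancelled sheets remove area $\ge 2\pi(R-\delta)^2-o(R^2)=2\pi R^2-o(R^2)$; and the collar, over which the relevant graph has gradient $O(1/\sqrt R)$, adds area $O(R\delta)=O(R^{3/2})$. The only volume lost is $\approx a\,\pi R^2$, removed from the collapsed slab; I restore it by adjoining to $\Omega_p$ a round geodesic ball $B_\rho(y_p)$ of that volume (so $\rho=O(R^{2/3})$) centred at a point $y_p$ deep in the complement of $\Omega_p$ and far from $B_R(x_p)$, at perimeter cost $O(\rho^2)=O(R^{4/3})$; a final small adjustment of $\rho$ makes the volumes agree exactly. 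Hence $|\partial\widetilde\Omega_p|\le|\Sigma_p|-2\pi R^2+O(R^{3/2})<|\Sigma_p|$ for $p$ large, contradicting that $\Omega_p$ is isoperimetric and completing the proof of the claim (and with it Case I).

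The point requiring the most care is the accounting inside $B_R(x_p)$: one must check that, on that scale, the sheets are indeed $C^1$-small graphs with the slab between consecutive ones of uniformly bounded fiberwise thickness, that the interpolation over the collar costs only $o(R^2)$, and that $R_p$ may be taken with $R_p\to\infty$ while $\Sigma_p$ stays $C^1$-close to $\Sigma_\infty$ on $B_{R_p}(x_p)$. These rely on the uniform curvature bound of Lemma \ref{bound  second fundmental form lemma}, the monotonicity formula (Proposition \ref{monotonicity}), and Corollary \ref{compactness iso}, rather than on bare smooth convergence on compact sets; one also uses that the complement of $\Omega_p$ has volume comparable to that of $L(p,q)$ to locate a suitable $y_p$.
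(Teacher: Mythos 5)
Your overall strategy is the same as the paper's: excise the part of one slab of $\Omega_p$ lying over a large disk of radius $R$ in the planar directions, gaining $\approx 2\pi R^2$ of area from the two cancelled planar sheets while paying only lower--order costs in the transition region, restore the lost volume elsewhere at lower--order cost, and transfer the surgery from $\Sigma_\infty$ to $\Sigma_p$ via the multiplicity--one graphical convergence of Corollary \ref{compactness iso}. The paper performs the excision with a sharp cut, replacing the $2k$ disks by the lateral boundaries of the excised solid cylinders at cost $2\pi R\sum_i a_i=O(R)$, whereas you slide the sheets together by the isometric flow and smooth the transition over a collar at cost $O(R^{3/2})$; both are $o(R^2)$ and the difference is immaterial.

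The one step that fails as written is the volume restoration. You replace the lost volume $\approx a\pi R^2$ by ``a round geodesic ball $B_\rho(y_p)$ of that volume (so $\rho=O(R^{2/3})$)\ldots at perimeter cost $O(\rho^2)=O(R^{4/3})$''. This presumes Euclidean volume growth $|B_\rho|\sim\rho^3$ at scales $\rho\to\infty$. But the defining feature of Case I is that the injectivity radius of $L(p,q)$ is $O(\frac1p)$ at \emph{every} point, so after rescaling by $p^2$ it is uniformly bounded, and the limit $\mathbb{S}^1(r_0)\times\mathbb{R}^2$ has only quadratic volume growth at large scales: a metric ball of volume $\sim aR^2$ has radius $\sim R$ and perimeter $\sim R$, and there is no round geodesic ball of radius $R^{2/3}\to\infty$ anywhere in the space. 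The repair is immediate and is exactly the paper's device: restore the volume with a vertical solid torus $\mathbb{S}^1\times D_r(y_p)$, so that $2\pi r_0\cdot\pi r^2=a\pi R^2$ forces $r=O(R)$ and the added perimeter is $2\pi r_0\cdot 2\pi r=O(R)$, still $o(R^2)$. With that substitution (and your correct observation that the complement of $\Omega_p$ has enough room to place the torus), your argument closes and coincides with the paper's.
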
 
We regard   $\mathbb{S}^1\times \mathbb{R}^2$  as a slab in $\mathbb{R}^3$ with height $2\pi$. Now, we construct a deformation of $\Sigma_{\infty}$ which decreases its area as follows. First, we cut off  the $k$ solid cylinders obtained from the intersection of  $\Omega_{\infty}$ with a  vertical solid torus of radius $R$. 
To balance the enclosed volume we add  a vertical solid torus of radius $r$, see Figure 1  for the case $k=1$. If $a_i$ is the distance between $\sigma_{2i}$ and $\sigma_{2i-1}$, then the radius $r$ is given by:
\[\sum_{i=1}^k\pi\,R^2a_i=\pi\, r^2 2\pi \Rightarrow r= R\sqrt{\frac{\sum_i a_i}{2\pi}}.\]
The boundary of this new region is denoted by $\widetilde{\Sigma}_{\infty}$ and
\[Area(\widetilde{\Sigma}_{\infty}\cap K)=Area(\Sigma_{\infty}\cap K)- 2k\pi\,R^2 + 2\pi\,R \cdot \sum_i a_i+ 2\pi\,r\cdot 2\pi.\]
\begin{figure}[h]
\begin{center}
  \includegraphics[scale=0.5]{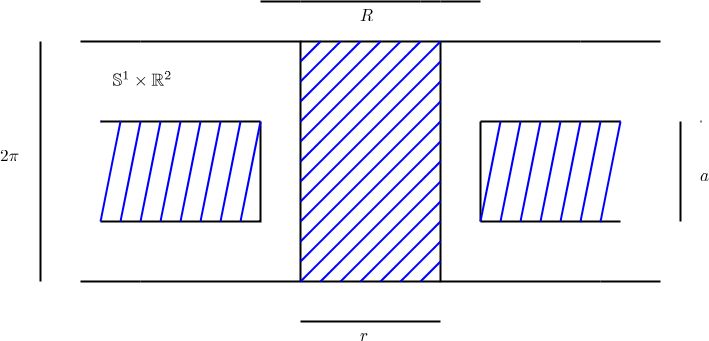}\\
  \caption{Compact support deformation of $\Sigma_{\infty}$.}
  \end{center}
\end{figure}
If $R$ is large enough, then $\widetilde{\Sigma}_{\infty}$ has less area than $\Sigma_{\infty}$. This is impossible since the strong multiplicity one convergence  allow us to carry out this deformation of $\Sigma_{\infty}$ to  $\Sigma_p$  which contradicts the fact that $\Sigma_p$ is an isoperimetric surface.
\begin{flushleft}
Case II: There is a subsequence $p\rightarrow \infty$ where the $\mathbb{Z}_{p}^q$ orbit of a point is becoming dense on the Clifford torus  containing  such point.
\end{flushleft}
We  use geometric measure theory methods to analyse  the pre-image sequence $\{\Sigma_p^{\prime}\}_{p\in \mathbb{N}} \subset \mathbb{S}^3$.  

It is proved in \cite{R2} that if $\Sigma=\partial \Omega$ has positive mean curvature, then
\[3\, |\Omega|\,\leq\, \int_{\Sigma} \frac{1}{H}\,d_{\Sigma}.\]
Applying this formula to the sequence of isoperimetric surfaces, we conclude that  the mean curvature of $\Sigma_p$ satisfy $H_p\leq \frac{|\Sigma_p|}{3|\Omega_p|}\leq \frac{2}{3}$. We  have used  that $\Sigma_p$ separates $L(p,q)$ into  regions of the same volume.

As $\{\Sigma_p\}_{p\in\mathbb{S}^3}$ has area and mean curvature bounded, we  apply \textit{Allard's compactness theorem}, Theorem 42.7 and Remark 42.8 in \cite{S},   to obtain an integral varifold $0\neq\mathcal{V}^2\subset \mathbb{S}^3$ that, up to subsequence, is $\mathcal{V}^2=\lim_{p\rightarrow \infty} \Sigma_p^{\prime}$.
Recall that $\mathbb{S}^1\times \mathbb{S}^1$ acts on $\mathbb{S}^3$ via $(z,w)\rightarrow(\alpha_1,\alpha_2)(z,w):=(\alpha_1\,z,\alpha_2\,w)$. We claim that $\mathcal{V}^2$ is $\mathbb{S}^1\times \mathbb{S}^1$ invariant:
indeed, if $(z,w)\in \text{supp}(\mathcal{V}^2)$ and $(\alpha_1,\alpha_2) \in \mathbb{S}^1\times \mathbb{S}^1$, then for each $p$ there is $l_p\in \mathbb{Z}$ such that $\lim_{p \rightarrow \infty} (e^{\frac{2\pi i l_p q}{p}}z, e^{\frac{2\pi i l_p}{p}}w)= (\alpha_1z,\alpha_2w)$. On the other hand,  as $(z,w)\in \mathcal{V}^2$,  there is 
$(z_p,w_p)\in \Sigma_p^{\prime}$  such that $(e^{\frac{2\pi i l_p q}{p}}z_p, e^{\frac{2\pi i l_p}{p}}w_p) \in \Sigma_p^{\prime}$ and $\lim_{p\rightarrow \infty}(z_p,w_p)=(z,w)$.   It follows that $\lim_{p \rightarrow \infty}(e^{\frac{2\pi i l_p q}{p}}z_p, e^{\frac{2\pi i l_p}{p}}w_p)= (\alpha_1 z,\alpha_2 w)$ and $(\alpha_1,\alpha_2)(z,w)\in \text{supp}(\mathcal{V}^2)$. In particular, $supp\,\mathcal{V}^2 = \bigcup_{j=1}^k T_{r_j}$ where $T_{r_j}$ is a  Clifford torus. 
The  monotonicity formula implies that the convergence $\Sigma_p^{\prime}\rightarrow \mathcal{V}^2$ is also in Hausdorff distance; hence, we have that $\Sigma_p^{\prime}=\cup_{j=1}^k \Sigma_p^{\prime j}$ and $ \text{supp}\, (\lim_{p\rightarrow \infty}\Sigma_p^{\prime j})= T_{r_j}$.  Since $\Sigma_p^{\prime}$ is $\mathbb{Z}_{p}^q$ invariant there exists  $\theta_p^{j_1 j_2} \in \mathbb{Z}_p^q\subset \mathbb{S}^1\times\mathbb{S}^1$  for which $\theta_p^{j_1 j_2} (\Sigma_i^{\prime j_2})=\Sigma_p^{\prime j_1}$. By taking the limit we obtain  $(\alpha_1,\alpha_2)(T_{r_{j_2}})=T_{r_{j_1}}$ for some $(\alpha_1,\alpha_2) \in \mathbb{S}^1\times\mathbb{S}^1$. As this is impossible, we conclude that $k=1$ and all $\Sigma_p^{\prime}$ are connected for $p$ large.

Now we consider $\{\Sigma_p^{\prime}\}_{p=1}\subset \textbf{I}_2(\mathbb{S}^3, \mathbb{Z})$, the space of   2-dimensional integral \textit{currents} on $\mathbb{S}^3$. Each $\Sigma_p^{\prime}=\partial \Omega_p^{\prime}$ and $\Omega_p^{\prime}\in \textbf{I}_3(\mathbb{S}^3, \mathbb{Z})$. As $\Omega_p^{\prime}$ is a region of finite perimeter ($\mathcal{X}_{\Omega_p^{\prime}}$ is BV function with uniform bounded variation), then $\Omega_i^{\prime}\rightarrow \Omega^{\prime}$ and  $\Sigma_p^{\prime} \rightarrow \partial \Omega^{\prime}$ as currents, $\Omega^{\prime}$ is an open set of finite perimeter, see Theorem 6.3 and proof of Theorem 37.2   in \cite{S}. 
Since $|\Omega_p|=\pi^2$ we conclude that $|\Omega^{\prime}|=\pi^2$.  
Applying  the \textit{Constancy theorem}, Theorem 26.27 in \cite{S}, we conclude that  $\Omega^{\prime}$ is the handlebody bounded by the Clifford torus $T_{r_1}$, and consequently $r_1= \frac{\pi}{4}$. 

We proved that $\mathcal{V}^2=m\,T_{\frac{\pi}{4}}$ for some positive integer $m \in \mathbb{N}$.
Since $\Sigma_p$ is isoperimetric, it follows that  $m=1$. Indeed,
\[ m\,|T_{\frac{\pi}{4}}|=|\mathcal{V}^2|=\lim_{p\rightarrow \infty}|\Sigma_p^{\prime}| \leq  |T_{\frac{\pi}{4}}|.\] 
As $T_{\frac{\pi}{4}}$ is smooth, we have for  $r>0$ sufficiently small  that the density $\theta(T_{\frac{\pi}{4}},r,x)\leq 1+\frac{\epsilon}{2}$, where $\epsilon>0$ is from Theorem \ref{Allard regularity} in the Appendix. On the other hand, as
$\Sigma_p^{\prime}$ is converging to $T_{\frac{\pi}{4}}$ with multiplicity one, then   $\theta(\Sigma_p^{\prime},x,r)\leq 1+\epsilon$ for $p$ large enough.
Now we invoke the smooth version of \textit{Allard's Regularity Theorem}, Theorem \ref{Allard regularity}, to concluded that 
the convergence $\Sigma_p^{\prime}\rightarrow T_{\frac{\pi}{4}}$ is strong, i.e.,  graphical with multiplicity one. 
As  strong convergence  preserves topology, we conclude that $g(\Sigma_p)=1$. This completes the  proof of Theorem \ref*{theorem 1}.
\end{proof}

\subsection{Berger Spheres}
Let $g_0$ be the round metric on $\mathbb{S}^3$ and $J$  the vector field on $\mathbb{S}^3$ defined as $J(z,w)=(\sqrt{-1}\, z,\sqrt{-1}\, w)$. Recall that $J$ is tangent to the fibers of the Hopf fibration $h:\mathbb{S}^3\rightarrow \mathbb{S}^2(\frac{1}{2})$.

The \textit{Berger metrics} are Riemannian metrics $g_{\varepsilon}$   on $\mathbb{S}^3$ defined as:
\[g_{\varepsilon}(X,Y)=g_0(X,Y) + (\varepsilon^2 -1)g_0(X,J)g_0(Y,J),\,\,\, \varepsilon >0.\]
The Riemannian manifolds $(\mathbb{S}^3,g_{\varepsilon})$ are  called the \textit{Berger spheres}, they are denoted by $\mathbb{S}_{\varepsilon}^3$. 
Geometrically, the metric $g_{\varepsilon}$ shrinks the Hopf fibers to have length $2\pi\,\varepsilon$.

The Berger metrics  are   also homogeneous   and their  group of isometries   has  dimension four.  
It follows from the work of  Abresch and Rosemberg \cite{AR}   that every constant mean curvature surface  in $\mathbb{S}_{\varepsilon}^3$ admits a holomorphic quadratic differential. In particular, every CMC sphere in $\mathbb{S}_{\varepsilon}^3$ is rotationally invariant.

A  precise study of closed orientable surfaces with constant mean curvature on the Berger spheres is given in 
\cite{TU}. It is  proved there  the existence of  $\varepsilon_1>0$ with the following property: if $\varepsilon\in[\varepsilon_1,1]$, then every  stable constant mean curvature surface  in $\mathbb{S}_{\varepsilon}^3$  has  
genus zero or one. Moreover,   if $\varepsilon^2 \in [\frac{1}{3},1]$, then  these stable CMC surfaces are  totally umbilical spheres or  the  minimal Clifford torus, the latter  only occurring  when $\varepsilon^2=\frac{1}{3}$. In particular,  rotationally invariant spheres are the only solutions of the isoperimetric problem in $\mathbb{S}_{\varepsilon}^3$ for $\varepsilon^2 \in [\frac{1}{3},1]$.

\begin{theorem}
\textit{There exists $\varepsilon_0\,>\,0$ such that for every $\varepsilon<\varepsilon_0$ the isoperimetric surfaces in the Berger spheres $\mathbb{S}_{\varepsilon}^3$ are either rotationally invariant spheres or  tori.} 
\end{theorem}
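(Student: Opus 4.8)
The strategy is to mimic the proof of Theorem \ref{theorem 1} as closely as possible, replacing the sequence of Lens spaces $L(p,q)$ with the Berger spheres $\mathbb{S}^3_\varepsilon$ as $\varepsilon\to 0$. First I would argue by contradiction: suppose there is a sequence $\varepsilon_k\to 0$ and isoperimetric surfaces $\Sigma_k\subset \mathbb{S}^3_{\varepsilon_k}$ that are neither rotationally invariant spheres nor tori. By the Abresch--Rosenberg holomorphic quadratic differential, every CMC sphere is rotationally invariant, so $\Sigma_k$ has genus $\geq 1$; the Ros--Ritoré genus bound (Theorem \ref{up bound w energy, g=2}, valid since the Berger metrics have positive Ricci curvature for $\varepsilon$ small) forces $g(\Sigma_k)\in\{1,2,3\}$, and since $\varepsilon^2=\tfrac13$ is the only case where a flat CMC torus is stable, for the purpose of the contradiction we may assume $g(\Sigma_k)\in\{2,3\}$ (the genus-one case being exactly the excluded "tori"). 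As in Theorem \ref{theorem 1}, the hard part is to rule out genus $2$ or $3$.

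Next I would rescale: put $\lambda_k = 1/\varepsilon_k$ (so the Hopf fibers in $\lambda_k^2 g_{\varepsilon_k}$ have length $2\pi$) and base the surfaces at points $x_k\in\Sigma_k$. The analogue of Lemma \ref{bound second fundmental form lemma} should hold verbatim — its proof only used the isoperimetric property, Alexandrov's theorem, the monotonicity formula, and Da Silveira's theorem, all of which are insensitive to the ambient geometry — so $|A_{\Sigma_k}|$ stays bounded in the rescaled metric. By Cheeger--Gromov compactness $(\mathbb{S}^3_{\varepsilon_k},\lambda_k^2 g_{\varepsilon_k},x_k)$ converges to a complete flat three-manifold; since the fibers have fixed length $2\pi$ and the base $\mathbb{S}^2(1/2)$ blows up, the limit is $\mathbb{S}^1(1)\times\mathbb{R}^2$. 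This replaces the whole Case I / Case II / rank-one analysis of Theorem \ref{theorem 1} with a single clean limit (there is no $q$-parameter and no dichotomy), which is the one genuine simplification. Along the way I would record that $\Sigma_k$ separates $\mathbb{S}^3_{\varepsilon_k}$ into two regions, and for the "half-volume" surfaces those regions have volume $\to\infty$ after rescaling, so the limit surface has infinite area.

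Then I would run the endgame exactly as in Case I of Theorem \ref{theorem 1}. Using the Poincaré--Hopf index theorem (genus $\geq 2$ forces a zero of $J^\top/|J|$ on $\Sigma_k$, where $J$ is the Hopf vector field), choose $x_k$ with $g_{\varepsilon_k}(J/|J|(x_k),N(x_k))=\pm1$. By the strong compactness for isoperimetric surfaces with bounded second fundamental form (Corollary \ref{compactness iso}) the $\Sigma_k$ converge, graphically and with multiplicity one, to a complete properly embedded stable CMC surface $\Sigma_\infty\subset \mathbb{S}^1\times\mathbb{R}^2$ of infinite area; Da Silveira's theorem makes it totally geodesic, and the normal condition makes it orthogonal to the fibers, so $\Sigma_\infty$ is a finite union of parallel flat annuli/planes bounding a region $\Omega_\infty$ which is a union of solid cylinders. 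Finally the compactly-supported "cut a big cylinder, add a thin Hopf torus" deformation from the proof of Theorem \ref{theorem 1} (Figure 1) strictly decreases area for $R$ large, and multiplicity-one convergence transplants it to $\Sigma_k$, contradicting that $\Sigma_k$ is isoperimetric. This handles the half-volume surfaces; to conclude for all volumes one invokes the analogue of the Claim in the proof of Theorem \ref{theorem 1} — a small-volume description of $I_{\mathbb{S}^3_\varepsilon}$ (spheres first, then flat Hopf tori, using \cite{TU} and the weak ODE \eqref{second derivative area} for the isoperimetric profile) together with the local-maximum argument on $\phi(v)=f(v)-I_{\mathbb{S}^3_\varepsilon}(v)$ to propagate "$\Sigma$ is a torus" across the whole interval.

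\textbf{Main obstacle.} The one point that needs real care is whether the curvature and diameter control needed for Cheeger--Gromov convergence and, more importantly, for the small-volume description of $I_{\mathbb{S}^3_\varepsilon}$ survives uniformly as $\varepsilon\to 0$: the sectional curvatures of $g_\varepsilon$ range in $[\varepsilon^2, 4-3\varepsilon^2]$, which stays bounded, but one must check that the transition volume $v_*(\varepsilon)$ where the sphere profile meets the flat-torus profile is $O(\varepsilon^2)$ and that genus-$\geq 2$ surfaces cannot appear below it — the analogue of Lemma \ref{isoperimetric small volume} — since here the injectivity radius is $O(\varepsilon)$ only away from... actually everywhere, because the whole fiber has length $2\pi\varepsilon$, so this should go through, but it is the step where the homogeneous-but-degenerating geometry must be handled explicitly rather than quoted from the Lens space case.
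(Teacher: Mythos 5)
Your core argument --- rescale the metric by $1/\varepsilon^2$ so the fibers have length $2\pi$, Cheeger--Gromov convergence to $\mathbb{S}^1\times\mathbb{R}^2$, the uniform bound on $|A_{\Sigma_\varepsilon}|$, strong multiplicity-one compactness, the Poincar\'e--Hopf choice of base points, Da Silveira forcing $\Sigma_\infty$ to be a union of totally geodesic planes orthogonal to the fibers, and the cut-the-cylinders/add-a-thin-torus deformation --- is exactly the paper's proof. Where you diverge is the endgame for surfaces that do not bisect the volume: you propose to rerun the half-volume reduction (the Claim together with an analogue of Lemma \ref{isoperimetric small volume}) for Berger spheres, and you correctly flag the small-volume profile analysis as the delicate step. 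The paper avoids this entirely by treating all volumes at once through a dichotomy on the area of the limit: if $\mathrm{Area}(\Sigma_\infty)<\infty$, the monotonicity formula makes $\Sigma_\infty$ compact, its lift to $\mathbb{R}^3$ lies in a solid cylinder, and Korevaar--Kusner--Solomon (Theorem \ref{kks}) forces it to be rotationally symmetric, hence of genus at most one, contradicting that graphical multiplicity-one convergence preserves the genus $2$ or $3$ of $\Sigma_\varepsilon$; if the area is infinite, your Da Silveira/deformation argument applies. So your stated ``main obstacle'' --- locating the transition volume $v_*(\varepsilon)$ and controlling the profile of the non-round Abresch--Rosenberg spheres --- never arises; the KKS dichotomy is the genuine simplification of the Berger-sphere case, and importing the isoperimetric-profile machinery from the Lens-space proof is unnecessary (though not wrong, provided you could actually carry out that profile computation, which would be substantially messier here since the geodesic spheres of $g_\varepsilon$ are not round).
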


\begin{proof}
Let's assume that for every $\varepsilon$ there exists an isoperimetric surface $\Sigma_{\varepsilon}$ with $g(\Sigma_{\varepsilon})=2$ or $3$.

We rescale  the metric $g_{\varepsilon}$ of  $\mathbb{S}_{\varepsilon}^3$ by the factor $\lambda_{\varepsilon}=\frac{1}{\varepsilon^2}$. The Hopf fibers have  constant length equal to $2\pi$ under the new metric $\lambda_{\varepsilon}g_{\varepsilon}$. It follows that the injective radius of $\mathbb{S}_{\varepsilon}^3$ at a point $p$ is equal to  $inj_p\mathbb{S}_{\varepsilon}^3=\pi$ for every $p\in\mathbb{S}_{\varepsilon}^3$.  

Since $h$ is a local trivial fibration,  we have that  for each $p_{\varepsilon}\in \mathbb{S}_{\varepsilon}^3$  there exist a neighbourhood $V$ of $h(p_{\varepsilon})$ and a diffeomorphism $\phi_{\varepsilon}: V\times \mathbb{S}^1 \rightarrow h^{-1}(V)$ such that $h\circ\phi_{\varepsilon}=\pi_1$, where $\pi_1: V \times \mathbb{S}^1\rightarrow V$ given by $\pi(x,y)=x$. Moreover, $\phi_{\varepsilon}^{*}(\frac{1}{\varepsilon^2}g_{\varepsilon})\rightarrow \delta$ in the $C^{\infty}$ topology. Therefore, in the sense of Cheeger-Gromov we have:
\[\big(\mathbb{S}_{\varepsilon}^3,\frac{1}{\varepsilon^2}\,g_{\varepsilon},p_{\varepsilon}\big) \rightarrow \big(\mathbb{S}^1\times\mathbb{R}^2,\delta,0\big).\]
 
We pick the  points $p_{\varepsilon}\in \Sigma_{\varepsilon}$  with the property that
$g_{\varepsilon}(J,N_{\varepsilon})(p_{\varepsilon})=\pm\,\varepsilon$, this means $J$ and $N_{\varepsilon}$ are parallel at $p_{\varepsilon}$. These points exist by the Poincar\'{e}-Hopf index theorem. By Lemma \ref{bound  second fundmental form lemma}   the inclusion of $\Sigma_{\varepsilon}$ in $\big(\mathbb{S}^1\times \mathbb{R}^2, \phi_{\varepsilon}^{*}(\varepsilon^{-2}g_{\varepsilon})\big)$  has the following property:
\[\textit{There exists}\,\,\,C>0\,\, \textit{such that}\,\,\, \sup_{\Sigma_{\varepsilon}}|A_{\varepsilon}|\leq C\,\,\, \text{for every}\,\varepsilon.\]
By the strong compactness theorem for  isoperimetric surfaces, Corollary \ref{compactness iso} in the Appendix,  we can extract a subsequence, $\{\Sigma_{\varepsilon_n}\}$, which converges  with multiplicity one  to a properly embedded surface
$\Sigma_{\infty}\subset (\mathbb{S}^1\times \mathbb{R}^2,\delta).$

If  $Area(\Sigma_{\infty})<\infty$, then   the monotonicity formula,  Proposition \ref{monotonicity}, implies that $\Sigma_{\infty}$  is compact. We apply
Theorem  \ref{kks} to conclude that $\Sigma_{\infty}$ is either a round sphere or torus. This is impossible since we have strong convergence and $g(\Sigma_{\varepsilon})=2\,or\, 3$.
Therefore,  $\Sigma_{\infty}$ is a complete non-compact surface with infinite area. Moreover, 
$\Sigma_{\infty}$ is also a stable CMC surface in $\mathbb{S}^1\times \mathbb{R}^2$:
\[I_{\Sigma_{\infty}}(f,f)\geq 0, \,\,\, \forall f\in C_0^{\infty}(\Sigma_{\infty})\,\, \textit{such that}\,\, \int_{\Sigma_{\infty}}f\,d_{\Sigma_{\infty}}=0.\]
It follows from Theorem \ref{silveira} that $\Sigma_{\infty}$ is   totally geodesic.
By the choice of $p_{\varepsilon}$ we conclude that    $\Sigma_{\infty}$ is orthogonal to the $\mathbb{S}^1$ fibers of $\mathbb{S}^1\times \mathbb{R}^2$. Since $\Sigma_{\infty}$ separates $\mathbb{S}^1\times \mathbb{R}^2$, we also conclude that $\Sigma_{\infty}$ is an union of at least two totally geodesic planes. As shown in the proof of Theorem 1.1, this configuration cannot be a limit of isoperimetric surfaces.
\end{proof}
\begin{section}{Appendix}

In this Appendix we collect some background results for surfaces with constant mean curvature in 3-manifolds.
\begin{proposition}\label{monotonicity}
\textit{Let $M^3$ be  a three manifold with bounded curvature,  $|K_M|\leq k$, and with positive lower bound on the injectivity radius $inj(M)\geq i_0$. If $\Sigma \subset M^3$ is a smooth surface with mean curvature which satisfies $|H|\leq H_0$, then there exists a  positive constant $C=C(H_0, k)$ such that
\begin{eqnarray}
\frac{d}{dr}\bigg(\frac{e^{C\,r}\,Area(\Sigma\cap B_r(p))}{r^2}\bigg)\geq 0,
\end{eqnarray}
for every $p\in \Sigma$ and $r\leq \min\{i_0,\frac{1}{\sqrt{k}}\}$.}
\end{proposition}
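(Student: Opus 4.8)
The plan is to run the classical monotonicity argument (cf.\ \cite{S}), driven by the first variation of area and by the Hessian comparison theorem. Write $r(x)=\dist_M(p,x)$. Since $r\le\min\{i_0,1/\sqrt k\}$, we are strictly inside the injectivity radius of $M$ (so $r$ is smooth on $B_r(p)\setminus\{p\}$ and $\nabla r$ is a unit field there) and strictly below the first conjugate radius $\pi/\sqrt k$ of the model space form of curvature $k$; this is all the regularity required. Put $I(\rho)=\Area(\Sigma\cap B_\rho(p))$. For $0<\rho<\sigma\le\min\{i_0,1/\sqrt k\}$ I would test the first variation identity
\[
\int_\Sigma\dv_\Sigma X\,d\mathcal H^2=-\int_\Sigma\langle X,\vec H\rangle\,d\mathcal H^2
\]
(valid with no boundary term, $\Sigma$ having no boundary inside $B_\sigma(p)$; here $\vec H$ is the mean curvature vector, $|\vec H|\le 2H_0$) against the Lipschitz vector field $X=\phi(r)\,r\,\nabla r$, where $\phi\equiv1$ on $[0,\rho]$, $\phi\equiv0$ on $[\sigma,\infty)$, and $\phi$ is linear on $[\rho,\sigma]$.

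The heart of the matter is the pointwise estimate along $\Sigma$,
\[
\dv_\Sigma(r\nabla r)=|\nabla^\Sigma r|^2+r\,\mathrm{tr}_\Sigma\,\Hess\,r\ \ge\ 2-c_0\,k\,r^2 ,
\]
where $\nabla^\Sigma r$ is the tangential part of $\nabla r$ (so $|\nabla^\Sigma r|\le1$) and $c_0$ is universal. To obtain it I would apply the Hessian comparison theorem with the upper sectional curvature bound $K_M\le k$, which yields $\Hess\,r\ge\sqrt k\cot(\sqrt k\,r)\,(g-dr\otimes dr)$ and hence $r\,\mathrm{tr}_\Sigma\,\Hess\,r\ge r\sqrt k\cot(\sqrt k\,r)\,(2-|\nabla^\Sigma r|^2)$; combining this with the elementary inequality $t\cot t\ge1-c_0t^2$ for $t\in(0,1]$ and discarding a manifestly nonnegative term gives the displayed bound, and since $kr^2\le\sqrt k\,r$ for $r\le1/\sqrt k$ one may further replace $c_0kr^2$ by $c_0\sqrt k\,r$. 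Note that $\dv_\Sigma(r\nabla r)$ remains bounded as $x\to p$, so it is integrable on $B_\rho(p)$.

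Expanding $\dv_\Sigma X=\phi(r)\,\dv_\Sigma(r\nabla r)+\phi'(r)\,r\,|\nabla^\Sigma r|^2$ in the first variation identity and letting $\sigma\downarrow\rho$, I would use the coarea formula $I'(\rho)=\int_{\Sigma\cap\partial B_\rho}|\nabla^\Sigma r|^{-1}\,d\mathcal H^1$ (valid for a.e.\ $\rho$, by Sard) to identify the limit of the $\phi'$-term with $\rho\int_{\Sigma\cap\partial B_\rho}|\nabla^\Sigma r|\,d\mathcal H^1\le\rho\,I'(\rho)$, while the right-hand side is controlled by $\big|\int_{\Sigma\cap B_\rho}r\langle\nabla r,\vec H\rangle\,d\mathcal H^2\big|\le 2H_0\,\rho\,I(\rho)$. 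Together with the pointwise estimate this produces, for a.e.\ $\rho\le\min\{i_0,1/\sqrt k\}$,
\[
\rho\,I'(\rho)-2\,I(\rho)+C\,\rho\,I(\rho)\ \ge\ 0,\qquad C=c_0\sqrt k+2H_0=C(H_0,k),
\]
and multiplying through by the positive factor $\rho^{-3}e^{C\rho}$ gives exactly $\frac{d}{d\rho}(\rho^{-2}e^{C\rho}I(\rho))\ge0$. As $\rho\mapsto\rho^{-2}e^{C\rho}I(\rho)$ is continuous and locally absolutely continuous, the a.e.\ inequality upgrades to the asserted monotonicity on the whole interval.

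The step I expect to require the most care is the first-variation/cutoff bookkeeping: verifying that $X=\phi(r)r\nabla r$ is an admissible (Lipschitz, compactly supported in $B_\sigma(p)$) competitor, that the identity carries no boundary contribution, and that the passage $\sigma\downarrow\rho$ faithfully reproduces the $\partial B_\rho(p)$ boundary integral for a.e.\ $\rho$ --- that is, at Lebesgue points of the relevant coarea integrand. Everything else reduces to the Hessian comparison estimate and the integrating-factor manipulation above.
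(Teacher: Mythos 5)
Your argument is correct; the paper itself offers no proof beyond the citation ``See Chapter 7 in \cite{CM}'', and what you have written out is exactly that standard monotonicity argument (first variation against $\phi(r)r\nabla r$, Hessian comparison under the upper curvature bound, coarea, integrating factor), correctly executed with the constant $C$ depending only on $H_0$ and $k$ as claimed.
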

\begin{proof}
See Chapter 7 in \cite{CM}.
\end{proof}
Let $\Sigma$ be a CMC surface in a closed manifold $M^3$. The density of $\Sigma$ at $x$ is given by 
\[\theta(\Sigma,x,r)= \frac{\text{Area}(\Sigma\cap B_r(x))}{\pi\,r^2}.\]
\begin{theorem}[Allard's Regularity Theorem]\label{Allard regularity}
\textit{Let $M^3$ be a closed manifold and $\rho>0$. There exist $\epsilon=\epsilon(M,\rho)>0$ and $C=C(M,\rho)$ with the following property: if $\Sigma \subset M$ is a smooth embedded  CMC surface satisfying 
\[\theta(\Sigma, x, r)\leq 1+ \epsilon\]
for every $x \in M$ and $r< \rho$, then its second fundamental form is uniformly bounded, i.e., $|A_{\Sigma}|\leq C$.}
\end{theorem}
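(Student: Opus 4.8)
The plan is to argue by contradiction through a blow-up, reducing to the fact that a complete properly embedded constant mean curvature surface in $\mathbb{R}^3$ all of whose density ratios are at most one is a flat plane.

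Suppose the statement fails. Then, taking $\epsilon = 1/i$ for $i=1,2,\dots$, we obtain for each $i$ a smooth embedded CMC surface $\Sigma_i \subset M$ with $\theta(\Sigma_i,x,r) \le 1 + 1/i$ for all $x \in M$ and all $r < \rho$, while $\lambda_i := \sup_{\Sigma_i}|A_{\Sigma_i}| > i$. Let $p_i \in \Sigma_i$ realize this supremum and rescale the metric by $\lambda_i^2$. In $(M,\lambda_i^2 g,p_i)$ the surface $\tilde\Sigma_i$ has $\sup|A_{\tilde\Sigma_i}| = 1$, mean curvature $\tilde H_i = H_i/\lambda_i$ with $|\tilde H_i|\le 1$ (using $2H_i^2 \le |A_{\Sigma_i}|^2$), and, the density ratio being scale invariant, still satisfies $\theta(\tilde\Sigma_i,\cdot,r) \le 1 + 1/i$ for every $r < \lambda_i\rho$. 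Since $\lambda_i \to \infty$, the balls $B_{\lambda_i\rho}(p_i)$ exhaust the limit and $(M,\lambda_i^2 g,p_i) \to (\mathbb{R}^3,\delta,0)$ in the Cheeger--Gromov sense.

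The uniform bound $|A_{\tilde\Sigma_i}| \le 1$ together with the area bound coming from $\theta \le 2$ yields uniform local graph parametrizations, and bootstrapping the quasilinear CMC equation with the uniformly bounded constants $\tilde H_i$ (standard elliptic estimates, cf. \cite{CM}) gives uniform $C^\infty_{\mathrm{loc}}$ bounds; hence a subsequence converges in $C^\infty_{\mathrm{loc}}$ to a complete properly embedded surface $\Sigma_\infty \subset \mathbb{R}^3$ of constant mean curvature $H_\infty = \lim \tilde H_i$, with $0 \in \Sigma_\infty$ and $|A_{\Sigma_\infty}|(0) = 1$. Comparing densities at small scales forces the convergence to have multiplicity one, so $\theta(\Sigma_\infty,x,r) \le 1$ for every $x \in \mathbb{R}^3$ and every $r>0$. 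If $H_\infty = 0$, then $\Sigma_\infty$ is minimal through $0$ and the monotonicity formula (Proposition \ref{monotonicity}, with $C=0$ in $\mathbb{R}^3$) forces $\theta(\Sigma_\infty,0,r) \equiv 1$; hence $\Sigma_\infty$ is a cone and, being smooth at $0$, a plane, contradicting $|A_{\Sigma_\infty}|(0)=1$. If $H_\infty \ne 0$, then $\Sigma_\infty$ is a complete embedded nonzero-CMC surface in $\mathbb{R}^3$: if compact it is a round sphere by Alexandrov's theorem, whose density at its center exceeds one at scales between its radius and its diameter, contradicting $\theta \le 1$; if noncompact, the density bound confines it to a solid cylinder, so by the structure theorem for embedded CMC surfaces (Theorem \ref{kks}, Korevaar--Kusner--Solomon) it is a Delaunay surface of revolution, which near its bulges is $C^2$-close to a round sphere and hence again has density larger than one at some point. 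In every case we reach a contradiction, proving the theorem.

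One may alternatively deduce the statement from the classical varifold form of Allard's regularity theorem (\cite{S}): the density hypothesis at all scales first forces $H_0\,\rho \le C(M)$ by the same blow-up reasoning, after which Allard produces uniform $C^{1,\alpha}$ graph representations at a fixed scale and Schauder theory for the CMC equation upgrades these to the bound $|A_\Sigma| \le C$. On either route the delicate point --- and the step I expect to be the main obstacle --- is the dichotomy for the blow-up limit: excluding a nontrivial nonzero-CMC blow-up (equivalently, showing that the density bound at all scales is incompatible with the surface being sphere- or Delaunay-like at the blow-up scale, including the confinement to a solid cylinder used above), which is where Alexandrov's theorem and the classification of properly embedded CMC surfaces in $\mathbb{R}^3$ enter.
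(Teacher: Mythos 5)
The paper offers no argument for this statement beyond a citation to White's local regularity theorem, so your blow-up proof is necessarily a different (and more self-contained) route, and most of it is sound. The rescaling at a point of maximal curvature, the passage to a complete properly embedded multiplicity-one limit $\Sigma_\infty\subset\mathbb{R}^3$ with $|A_{\Sigma_\infty}|(0)=1$ and $\theta(\Sigma_\infty,x,r)\le 1$ for all $x\in\mathbb{R}^3$ and $r>0$, the rigidity in the monotonicity formula when $H_\infty=0$, and the exclusion of round spheres via the density at their centers are all correct (note that the hypothesis $\theta\le 1+\epsilon$ for \emph{every} $x\in M$, not merely $x\in\Sigma$, is exactly what makes the sphere case work).

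The genuine gap is the sentence ``if noncompact, the density bound confines it to a solid cylinder.'' No justification is given, and none is available from the density bound alone: $\theta(\Sigma_\infty,x,r)\le 1$ is equivalent to the quadratic area bound $\mathrm{Area}(\Sigma_\infty\cap B_r(x))\le\pi r^2$, which is planar-type growth and perfectly compatible with a surface that is not cylindrically bounded (a plane itself satisfies it). Cylindrical boundedness of embedded CMC surfaces with $H\ne 0$ is a theorem of Meeks and of Korevaar--Kusner--Solomon, but it applies to \emph{annular ends of finite-topology} surfaces, and your blow-up limit need not have finite topology (a priori it could be, say, a doubly periodic embedded CMC surface). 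So the case ``$H_\infty\ne 0$ and $\Sigma_\infty$ noncompact'' --- which you yourself flag as the main obstacle --- is not closed. To repair it you must prove directly that a complete, properly embedded, nonplanar surface of nonzero constant mean curvature in $\mathbb{R}^3$ always carries a point and a scale at which the density ratio exceeds $1+\delta_0$ for a universal $\delta_0>0$ (true for all known examples: about $2$ for cylinders and near-cylindrical unduloids at points of the axis, up to $4$ for spheres), or else avoid global classification entirely by running the Allard/White excess-decay machinery at a single fixed scale after first bounding $H\rho$ --- but your proposed bound on $H\rho$ ``by the same blow-up reasoning'' runs into exactly the same missing case, so it does not resolve the issue. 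A smaller inaccuracy in the same step: a Delaunay unduloid is ``$C^2$-close to a round sphere near its bulges'' only near the spherical end of the Delaunay family; for unduloids close to the cylinder one should instead use the density of the cylinder computed at points of its axis.
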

\begin{proof}
See Theorem 1.1 in \cite{W}.
\end{proof}

Let $\{\Sigma_n\}_{n\in\mathbb{N}}$ be a sequence of surfaces in a manifold $M$. We say that $\Sigma_n$  converge to  $\Sigma$ in the sense of graphs if near any point $p\in\Sigma$ and for large $n$  the surface $\Sigma_n$ is locally a  graph over an open set of $T_p\Sigma$ and these graphs converge smoothly to the graph of $\Sigma$.
In addition, we say that $\{\Sigma_n\}$ satisfy \textit{local area bounds} if there exist $r>0$ and $C>0$ such that $|\Sigma_n\cap B_r(x)|\leq C$ for every $x\in M$.

A hypersurface $\Sigma$ is said to be \textit{weakly embedded}  if it admits only tangential self intersections. 
\begin{proposition}\label{compactness}
\textit{Let $\{\Sigma_n\}\subset(M,g_n) $ be a sequence of embedded surfaces  with constant mean curvature  satisfying local area bounds and such that $\sup_{\Sigma_n}|A_n|\leq C$. Let's assume that $g_n$ converges to a  metric $\delta$ in the $C^{\infty}$ topology. If $\{\Sigma_n\}_{n=1}$ has an accumulation point,  then we can extract a  subsequence that converges   to a properly weakly embedded CMC surface $\Sigma$ in $(M,\delta)$.}
\end{proposition}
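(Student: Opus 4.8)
The plan is the standard argument for CMC surfaces with a uniform bound on the second fundamental form: first upgrade the curvature bound to uniform local graphical control, then extract a convergent subsequence by Arzel\`{a}--Ascoli and elliptic bootstrapping, and finally patch the local limits together, using the strong maximum principle to obtain that the limit is weakly embedded.

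First I would fix a point $x_0\in M$ that is an accumulation point of $\{\Sigma_n\}$ and work in $\delta$-normal coordinates on a ball $B_{2\rho}(x_0)$; since $g_n\to\delta$ in $C^\infty$, these are almost Euclidean for $n$ large. The bound $\sup_{\Sigma_n}|A_n|\le C$ yields, by the usual argument, a radius $r_0=r_0(C,\delta)>0$ such that every connected component of $\Sigma_n\cap B_{r_0}(p)$ through a point $p\in\Sigma_n\cap B_\rho(x_0)$ is a graph over a disc in $T_p\Sigma_n$ with gradient and Hessian bounded in terms of $C$. The local area bounds then cap the number of such sheets in $B_{r_0/2}(p)$ by a constant $N=N(C,\delta)$. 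Passing to a subsequence so that the relevant tangent planes converge, over a fixed disc $D$ each $\Sigma_n$ is a union of $k_n\le N$ ordered graphs $u_n^{(1)}\le\cdots\le u_n^{(k_n)}$ with uniformly bounded $C^{1,\alpha}$ norms; after a further subsequence $k_n$ is constant and each $u_n^{(j)}$ converges in $C^{1,\alpha'}$. Since each $u_n^{(j)}$ solves the quasilinear uniformly elliptic prescribed-mean-curvature equation for $g_n$ with constant right-hand side $H_n$, and $|H_n|\le C$ so that $H_n\to H_\infty$ along a subsequence, Schauder estimates bootstrap this to smooth convergence, and the limit $u_\infty^{(j)}$ solves the CMC equation for $\delta$ with mean curvature $H_\infty$.

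Next I would globalise by a diagonal argument: take a countable exhaustion of $M$, run the above over a countable dense set of accumulation points, and pass to a diagonal subsequence; the local limits agree on overlaps and glue to a surface $\Sigma$ with constant mean curvature $H_\infty$, which is complete and, by the uniform graphical description together with the local area bounds, properly immersed. For weak embeddedness, consider two limiting sheets $u_\infty^{(i)}\le u_\infty^{(j)}$: either they are disjoint or they touch, and where they touch the nonnegative difference $u_\infty^{(j)}-u_\infty^{(i)}$ satisfies a linear homogeneous second-order elliptic equation (the linearisation of the CMC operator at $u_\infty^{(i)}$), so by the strong maximum principle it vanishes identically near the contact point and, by unique continuation, wherever the two graphs are both defined. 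Hence coinciding sheets coincide as sets, the only self-intersections of $\Sigma$ are tangential, and $\Sigma$ is properly weakly embedded.

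I expect the main obstacle to be this last step: organising the sheet decomposition consistently across overlapping charts so that the maximum-principle argument applies globally, and deducing genuine properness rather than mere local finiteness. The rescaling and PDE part of the argument is routine once the bound on $|A_n|$ is available.
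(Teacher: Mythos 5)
Your proposal is correct and follows essentially the same route as the paper: the $|A_n|$ bound gives a uniform graphical radius with gradient and Hessian control, the CMC graph equation is uniformly elliptic so Schauder estimates and Arzel\`{a}--Ascoli give smoothly convergent sheets, the local area bounds (plus a lower area bound per sheet, which the paper gets from the monotonicity formula) bound the number of components, and a covering/diagonal argument produces the global limit. The only divergence is at the final step: the paper obtains weak embeddedness simply by observing that a graphical limit of embedded surfaces cannot cross itself transversally, whereas your strong-maximum-principle argument is not needed for this and, as stated, is slightly too strong --- it does not apply when two touching sheets carry opposite unit normals (so their graphs have mean curvatures $H$ and $-H$ and the difference is not a supersolution of a homogeneous equation), which is exactly the situation producing a genuine tangential self-intersection that the definition of weakly embedded is designed to allow.
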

\begin{proof}[Sketch of the Proof.]
Let's first recall the constant mean curvature equation for graphs. If $\Sigma^{\prime}$ is a   surface with constant mean curvature $H$ in $(M,g)$, then  $\Sigma^{\prime}$ can be written locally as a graph over a neighbourhood $U_p \subset T_p\Sigma^{\prime}$:
\[\Sigma^{\prime}= Graph(\,u\,)=\{(x_1,x_2,u(x_1,x_2): x_1,x_2 \in U_p)\}.\]
In coordinates  $g_{ij}:=g(e_i,e_j)$ where $\{e_1,e_2,e_3\}$ is the coordinate base associated to $(x_1,x_2,x_3)$. Let $\{E_1,E_2\}$ be the coordinate base for $\Sigma_n$, i.e., $E_i=e_i+u_{x_i}e_3=T_i^le_l$. The induced metric $h$ is expressed  by $h_{ij}=h(E_i,E_j)$.
A simple computation gives:
\[g(N,e_i)=\frac{-u_{x_i}}{\sqrt{1+g^{ij}u_{x_i}u_{x_j}}}\,\,\,\text{and}\,\,\, g(N,e_3)=\frac{1}{\sqrt{1+g^{ij}u_{x_i}u_{x_j}}}.\]
We also have
\begin{eqnarray*}
\nabla_{E_i}E_j&=&T_i^l\nabla_{e_l}T_j^me_m=T_i^lT_j^m\nabla_{e_l}e_m+E_i(T_j^m)e_m \\
&=&T_i^lT_j^m\Gamma_{ml}^ke_k+u_{x_ix_j}e_3,
\end{eqnarray*}
where $\Gamma_{ml}^k$ are the Christoffel symbols of $g$. Therefore,
\[g(N,\nabla_{E_i}E_j)=\,-\,\frac{\sum_{k=1}^2T_i^lT_j^m\Gamma_{ml}^ku_{x_k}}{\sqrt{1+g^{ij}u_{x_i}u_{x_j}}}+\frac{\big(u_{x_ix_j}+T_i^lT_j^m\Gamma_{ml}^3\big)}{\sqrt{1+g^{ij}u_{x_i}u_{x_j}}}
.\]
Finally, the mean curvature equation is written as: 
\begin{equation}\label{mean curvature equation}
H\,\sqrt{1+g^{ij}u_{x_i}u_{x_j}}=h^{ij}u_{x_ix_j}+h^{ij}T_i^lT_j^m\Gamma_{ml}^3-\sum_{k=1}^2h^{ij}T_i^lT_j^mu_{x_k}\Gamma_{ml}^k.
\end{equation}
Since $h_{ij}=T_i^lT_j^mg_{ml}$ and $T_i^3=u_{x_i}$, then $|h^{ij}|\leq C_1(g^{ij},u_{x_i},u_{x_j})$. 
The equation (\ref{mean curvature equation})  is uniformly elliptic as long as $|\nabla u|, |\nabla^2u|< \widetilde{C}$.

Let $p\in M$ be an accumulation point for the sequence $\{\Sigma_n\}$. By the upper bound on the second fundamental form, $\sup_{\Sigma_n}|A_n|<C$,  there exists $r_0=r_0(C)$ such that for every $q \in B_{r_0}(p)\cap \Sigma_n$ we have that $\Sigma_n\cap B_{r_0}(q)$ is locally a graph $u_n$ over a 
neighbourhood $U_{q}\subset T_{q}\Sigma_n$. Moreover, there exists $C_2=C_2(C)>0$ for which $\max\{\nabla u_n, \nabla^2u_n\}\leq C_2$.  As
$|g_n-\delta|_{C^{2,\alpha}}\rightarrow 0$,  the Schauder estimates for  solutions of  elliptic equations, see \cite{GT}, imply that
 $u_n$ have $C^{2,\alpha}$ estimates on $B_{r_0/2}(q)$, i.e., $|u_n|_{C^{2,\alpha}}\leq C_3(|H_n|+ |u_n|)$. Therefore, $u_n,\nabla u_n,\nabla^2u_n$ are uniformly bounded and equicontinuous.

As $\{\Sigma_n\}_{n\in\mathbb{N}}$  satisfy local area bounds, then $|\Sigma_n\cap B(r_0/2)(y)|\leq C_4$.
On the other hand,  the monotonicity formula,  Proposition \ref{monotonicity}, gives that  $|\Sigma_n^j\cap B_{r_0}(y)|\geq C_5 r_0^2$, where  $\Sigma_n^j$ is a connected component of $\Sigma_n\cap B_{r}$.
It follows that the number of components of $\Sigma_n \cap B_{r_0/2}(p)$ is finite and independent of $n$.
By the Arzel\`{a}-Ascoli theorem we can extract a  subsequence for which $\{u_n^j\}$  converges to  $u$  for every $j$. Moreover, $u$ also satisfies  the constant mean curvature equation (\ref{mean curvature equation}). As the set of accumulation points of $\{\Sigma_n\}$ is compact in $B_R(p)$ we can cover this set by finite balls $B_{r_0}(p_k)$ with $k=1,\ldots, N$. Repeating the arguments in each of these balls  and applying a diagonal argument   we obtain a properly immersed surface $\Sigma$ on  $B_R(p)\subset M$ with constant mean curvature $H$. Since
 the surfaces $\Sigma_n$ are embedded, it follows that $\Sigma$ does not cross itself though it may have tangential self-intersections. Therefore, $\Sigma$ is  properly weakly embedded in $M$. 
\end{proof}
\begin{corollary}\label{compactness iso}
\textit{Let $(\Sigma_n, x_n) \subset (M_n,g_n, x_n)$ be a sequence of isoperimetric surfaces with $|A_n|\leq C$. Assume that  $(M_n,g_n,x_n)$ converges,  in the sense of Cheeger-Gromov, to  a three manifold $(M,g,x)$.  There exists a properly embedded surface $\Sigma\subset (M,g,x)$ such that  $\Sigma_n\rightarrow \Sigma$ in the sense of graphs and the convergence is with multiplicity one.}
\end{corollary}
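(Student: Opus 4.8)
The plan is to transport everything into the fixed limit manifold $M$, apply the general compactness statement Proposition~\ref{compactness}, and then use the fact that each $\Sigma_n=\partial\Omega_n$ minimizes perimeter to upgrade the conclusion ``properly weakly embedded, possibly with multiplicity'' to ``embedded, with multiplicity one''.

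\emph{Reduction to $M$.} Let $\Omega_n\subset M$ be the exhaustion and $\phi_n\colon\Omega_n\to M_n$ the diffeomorphisms of the Cheeger--Gromov convergence, with $\phi_n(x)=x_n$; I would put $g_n':=\phi_n^{*}g_n$, so $g_n'\to g$ in $C^{\infty}_{loc}$, and write $\Sigma_n$ again for $\phi_n^{-1}\big(\Sigma_n\cap\phi_n(\Omega_n)\big)$, an embedded CMC surface of $(M,g_n')$. Since $\phi_n(x)=x_n\in\Sigma_n$, the point $x$ lies on every $\Sigma_n$, so the sequence has an accumulation point, and $2H_n=\mathrm{trace}(A_n)$ gives $|H_n|\le C$ uniformly. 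For the local area bounds, fix $y$ and small $r$ and compare $\Omega_n$ with $\Omega_n\setminus B_r(y)$, afterwards restoring the lost volume (which is $\le C r^3$) by adding a small geodesic ball at perimeter cost $\le C' r^2$: minimality then yields $|\Sigma_n\cap B_r(y)|\le C'' r^2$, with $C''$ depending only on the locally uniform geometry of the $g_n'$ and on $C$. Thus Proposition~\ref{compactness} applies: after passing to a subsequence, $\Sigma_n\to\Sigma$ in the sense of graphs, with $\Sigma\subset(M,g)$ a properly weakly embedded CMC surface.

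\emph{From weakly embedded to embedded, and multiplicity one.} Here I would argue by contradiction. If the claim fails, then near some $p\in\Sigma$ -- either because the graphical convergence there has multiplicity $\ge 2$, or because $\Sigma$ has a tangential self-intersection at $p$ -- the surface $\Sigma_n$, for $n$ large, contains two disjoint graphical sheets $\Sigma_n^{1},\Sigma_n^{2}$ over a fixed disk $D\subset T_p\Sigma$, both converging to $\Sigma$. They bound a thin region $W_n$ over $D$ with $|W_n|\to 0$; since $\partial\Omega_n$ separates, $W_n$ lies entirely in $\Omega_n$ or entirely in its complement, and (replacing $\Omega_n$ by $M_n\setminus\Omega_n$, still isoperimetric, if needed) I may assume $W_n\subset\Omega_n$. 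I would then set $\Omega_n':=\Omega_n\setminus W_n$: deleting $W_n$ removes the parts of $\Sigma_n^{1}$ and $\Sigma_n^{2}$ over $D$ -- whose total area is $\ge 2c_0r^2$ by the monotonicity formula (Proposition~\ref{monotonicity}) -- and introduces only the ``side walls'' of $W_n$, of area $O\big(r\cdot\mathrm{thickness}(W_n)\big)=o(1)$, so that $|\partial\Omega_n'|\le|\partial\Omega_n|-2c_0r^2+o(1)$. Adding a small geodesic ball of volume $|W_n|$ in a region of $M_n\setminus\overline{\Omega_n}$ disjoint from $B_r(p)$ (possible for $n$ large) produces $\Omega_n''$ with $|\Omega_n''|=|\Omega_n|$ and $|\partial\Omega_n''|\le|\partial\Omega_n'|+C|W_n|^{2/3}\le|\partial\Omega_n|-2c_0r^2+o(1)<|\partial\Omega_n|$, contradicting that $\Omega_n$ is isoperimetric. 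Hence the convergence has multiplicity one at every point, and $\Sigma$ is embedded.

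\emph{Main obstacle.} The delicate point is the last step: one must check that the area genuinely lost in the cut is bounded below by a constant $2c_0r^2>0$ independent of $n$, while both correction terms -- the side walls of $W_n$ and the perimeter cost $C|W_n|^{2/3}$ of restoring the volume -- really tend to $0$; and one should verify that the almost-minimality estimates used to obtain the local area bounds hold with constants uniform in $n$, which is exactly where the locally uniform control of the metrics $g_n'$ furnished by Cheeger--Gromov convergence is used.
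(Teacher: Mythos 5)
Your proposal is correct and follows essentially the same route as the paper: local area bounds via an isoperimetric comparison, reduction to Proposition~\ref{compactness}, and then cut-and-paste competitors (remove the thin region between nearby sheets, restore the volume with a small ball elsewhere) to contradict minimality and rule out multiplicity $\ge 2$ and tangential self-intersections. The only cosmetic difference is that you treat all higher multiplicities uniformly with the slab-plus-ball surgery, whereas the paper uses a volume-preserving neck for multiplicity greater than two and reserves the ball correction for the multiplicity-two case.
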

\begin{proof}[Sketch of  the Proof.]
First we remark that $\{\Sigma_n\}$ satisfy local area bounds. Indeed, $\text{Area}(\Sigma_n\cap B_r(p))\leq 2 |\partial B_r(p)|$. 
By Proposition \ref{compactness} we only need  to rule out possibly multiplicities for the convergence $\Sigma_n\rightarrow \Sigma$ and   points where $\Sigma$  fails to be embedded. 
 
If the multiplicity of the limit is bigger than two, then $\Sigma_n \cap B_{r}(p)$ has  several  components getting arbitrarily  close.
This allow us to do  a local cut and past deformation, as shown in Figure 2, that preserves the enclosed volume.    
If $\delta$ is the Euclidean metric, then  $\frac{1}{\widetilde{C}} \delta \leq g_n \leq \widetilde{C} \delta$ and $\frac{1}{C^{\prime}}\text{Area}_{\delta} \leq \text{Area}_{g_n} \leq C^{\prime} \text{Area}_{\delta}$. 
Thus, if $h \ll r$, then \[Area_{g_n}(\Sigma_n^{\prime})\leq Area_{g_n}(\Sigma_n) -C_1^{\prime} r^2 + C_2^{\prime} rh< \text{Area}_{g_n}(\Sigma_n).\] 
\begin{figure}[h]
 \begin{center}
   \includegraphics[scale=0.45]{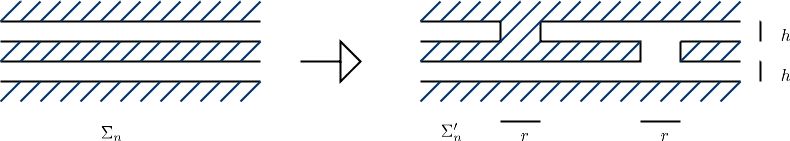}\\
   \caption{Example of higher multiplicity}
   \end{center}
 \end{figure}
 
The deformation needed for the multiplicity two case is shown in Figure 3. 
The constraint on the enclosed volume implies that $\frac{4}{3}\pi R^3\approx \pi r^2h$. Hence, if $h\ll r$, then
\[Area(\Sigma_n^{\prime})\leq\,Area(\Sigma_n^{\prime})- C_1^{\prime} r^2+C_2^{\prime} r\,h-C_3^{\prime} R^2+ C_4^{\prime} R^2 < Area(\Sigma_n^{\prime}).\]

 The construction to deal with points where   $\Sigma$ has tangential self intersections is  similar to the multiplicity two case.  The corollary now follows since these constructions contradict the fact that $\Sigma_n$ is  isoperimetric for every $n$.
 \end{proof}
  \begin{figure}[h]
  \begin{center}
    \includegraphics[scale=0.5]{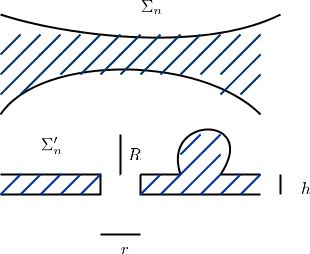}\\
    \caption{Example of multiplicity two}
    \end{center}
  \end{figure}

\begin{theorem}[Korevaar-Kusner-Solomon]\label{kks}
\textit{If $\Sigma$ is a complete CMC surface  properly embedded in a solid cylinder in $\mathbb{R}^3$, then $\Sigma$ is rotationally symmetric with
respect to a line parallel to the axis of the cylinder.}
\end{theorem}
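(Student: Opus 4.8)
\emph{Proof proposal.} The plan is to run Alexandrov's method of moving planes, sweeping with planes \emph{parallel to the axis} of the cylinder, and to deal with the noncompactness of $\Sigma$ as in the work of Korevaar--Kusner--Solomon. Normalize coordinates so that the solid cylinder is $C=\{(x_1,x_2,x_3):x_1^2+x_2^2\le R^2\}$ with axis the $x_3$-axis. Since $\Sigma$ is properly embedded it separates $\mathbb R^3$, so write $\Sigma=\partial\Omega$ and let $H\ge 0$ denote its mean curvature with respect to the outward normal of $\Omega$. The proof reduces to two steps: first, show that for every horizontal unit vector $v$ (that is, $\langle v,e_3\rangle=0$) there is a vertical plane $P_v$ with normal $v$ across which $\Sigma$ is invariant; second, show that all the planes $P_v$ pass through a common vertical line, so that $\Sigma$ is rotationally symmetric about that line.

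For fixed horizontal $v$ and $t\in\mathbb R$ put $P_{v,t}=\{\langle x,v\rangle=t\}$, $\Sigma_{v,t}^{+}=\Sigma\cap\{\langle x,v\rangle\ge t\}$, and let $\Sigma_{v,t}^{*}$ be the reflection of $\Sigma_{v,t}^{+}$ across $P_{v,t}$. Because $\Sigma\subset C$ we have $\Sigma_{v,t}^{+}=\emptyset$ for $t>R$, so we may decrease $t$ from $+\infty$; let $t_0=t_0(v)$ be the first value at which the reflected piece $\Sigma_{v,t}^{*}$, which for $t$ near $R$ lies on the $\{\langle x,v\rangle\le t\}$ side of $\Sigma$ and is a graph over it, fails to do so. At $t=t_0$ one of two contact configurations occurs: either $\Sigma_{v,t_0}^{*}$ is internally tangent to $\Sigma$ at an interior point, with coinciding oriented mean curvature, or $\Sigma$ meets $P_{v,t_0}$ orthogonally along $\Sigma\cap P_{v,t_0}$. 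In the first case the interior strong maximum principle for the (quasilinear, uniformly elliptic near the contact point) CMC graph equation forces $\Sigma_{v,t_0}^{*}=\Sigma$ near that point, hence everywhere by connectedness; in the second case the Hopf boundary point lemma gives the same conclusion. Either way $\Sigma$ is symmetric across $P_v:=P_{v,t_0}$. \textbf{The main obstacle} is precisely that $\Sigma_{v,t}^{+}$ is noncompact, so one must rule out that as $t$ decreases the reflected surface "slips past" $\Sigma$ near infinity without any contact, i.e.\ that $t_0$ fails to be realized by an actual tangency; this is where one invokes the properness of $\Sigma$ together with the fact that it is trapped in the region $C$, which is bounded in the $v$-direction, packaged as a maximum principle at infinity and the flux/curvature estimates of Korevaar--Kusner--Solomon. (If $H=0$ one additionally uses that there is in fact no complete minimal surface properly embedded in a solid cylinder, by a Bombieri--De Giorgi--Miranda type gradient estimate.)

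For the second step, choose three horizontal directions $v_1,v_2,v_3$ and consider the vertical symmetry planes $P_{v_1},P_{v_2},P_{v_3}$. If these were not concurrent, the group of isometries of $\mathbb R^3$ generated by the reflections across them would contain a nontrivial horizontal translation (the composition of the three reflections is a glide reflection whose square is such a translation), and invariance of $\Sigma$ under it would contradict the fact that $\Sigma$ is bounded in the horizontal directions. Hence all the planes $P_v$ pass through one common vertical line $L$. The group generated by the reflections across all vertical planes through $L$ is the full orthogonal group acting on the horizontal factor about $L$, so $\Sigma$ is invariant under all rotations about $L$; that is, $\Sigma$ is rotationally symmetric about the line $L$, which is parallel to the axis of $C$. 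This completes the argument.
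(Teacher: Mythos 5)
The paper does not actually prove this theorem --- it simply cites Korevaar--Kusner--Solomon --- and your sketch is a correct outline of precisely the Alexandrov moving-planes argument used in that reference. Be aware, though, that the one genuinely hard point, namely showing that the critical position $t_0$ is realized by an actual interior tangency or orthogonal contact rather than by the reflected surface slipping past $\Sigma$ at infinity in the unbounded direction of the cylinder, is exactly the step you defer to the KKS curvature/height estimates, so what you have is a faithful outline of their proof rather than a self-contained one; the remaining ingredients (strong maximum principle and Hopf boundary lemma at the touching point, concurrence of the symmetry planes via the induced horizontal translation, and disposal of the case $H=0$) are all handled correctly.
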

\begin{proof}
See \cite{KKS}.
\end{proof}

\begin{theorem}[Da Silveira]\label{silveira}
\textit{Let $(\Sigma,ds^2)$ be a complete orientable  surface conformally equivalent to a compact Riemann surface punctured at a finite number of points.
Let $L=\Delta + q$ be an operator on $\Sigma$ satisfying $q\geq 0$ and $q \neq 0$. If $\Sigma$ has infinite area, then there exists a piecewise smooth function $f$ with compact support such that
\[-\int_{\Sigma}f\,L\,f \,d_{\Sigma}<0\quad \textit{and}\quad \int_{\Sigma}f\,d_{\Sigma}=0.\]}
\end{theorem}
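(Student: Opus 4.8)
Here is the approach I would take. Since $-\int_\Sigma fLf\,d_\Sigma=\int_\Sigma|\nabla f|^2\,d_\Sigma-\int_\Sigma qf^2\,d_\Sigma$ for every piecewise smooth $f$ of compact support (integration by parts), the task is to produce such an $f$ with $\int_\Sigma f\,d_\Sigma=0$ and Dirichlet energy strictly smaller than $\int_\Sigma qf^2\,d_\Sigma$. The plan is to do this in two stages: first construct a compactly supported $g$ with $-\int_\Sigma gLg\,d_\Sigma<0$, ignoring the mean-value constraint; then add a correction supported far from $g$ that restores $\int_\Sigma f\,d_\Sigma=0$ while perturbing the Rayleigh quotient by an arbitrarily small amount. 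The mechanism behind both stages is the two-dimensional fact that the Dirichlet integral $\int|\nabla u|^2\,d\Sigma$ depends only on the conformal class of the metric; in particular, near each puncture, where $\Sigma$ is conformally a punctured disc, one can insert logarithmic cut-off functions of arbitrarily small Dirichlet energy no matter what $ds^2$ does there.

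For the first stage, write $\Sigma=\overline{\Sigma}\setminus\{P_1,\dots,P_m\}$ and fix conformal coordinates $z_j$ near $P_j$ identifying a punctured neighbourhood with $\{0<|z_j|<1\}$, so that $K=\Sigma\setminus\bigcup_j\{0<|z_j|<\tfrac12\}$ is a compact core. For small $\epsilon>0$ let $g_\epsilon$ equal $1$ on $\Sigma\setminus\bigcup_j\{|z_j|<\epsilon\}$, equal the logarithmic interpolation on each annulus $\{\epsilon^2\le|z_j|\le\epsilon\}$, and equal $0$ on $\bigcup_j\{|z_j|<\epsilon^2\}$. Then $g_\epsilon$ is piecewise smooth with compact support; conformal invariance of the Dirichlet integral together with the annulus-capacity formula gives $\int_\Sigma|\nabla g_\epsilon|^2\,d_\Sigma=\tfrac{2\pi m}{\log(1/\epsilon)}\to0$, while $g_\epsilon^2\to1$ pointwise, so $\liminf_{\epsilon\to0}\int_\Sigma qg_\epsilon^2\,d_\Sigma\ge\int_\Sigma q\,d_\Sigma>0$ by Fatou (positive since $q\ge0$, $q\not\equiv0$). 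Hence $-\int_\Sigma g_\epsilon Lg_\epsilon\,d_\Sigma<0$ for $\epsilon$ small, and I fix such a $g:=g_\epsilon$, setting $\eta:=-\!\int_\Sigma gLg\,d_\Sigma>0$ and $c:=\int_\Sigma g\,d_\Sigma$. If $c=0$ we are done; otherwise, replacing $g$ by $-g$, assume $c>0$.

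For the second stage, observe that since $\mathrm{Area}(\Sigma)=\infty$, $K$ has finite area, and there are finitely many ends, some end --- say at $P_1$, with coordinate $z:=z_1$ --- has infinite area, whence $\mathrm{Area}(\{0<|z|\le s\})=\infty$ for every $s>0$ (its complement in a fixed neighbourhood is a compact annulus of finite area). Since $g\equiv0$ on the open set $\{|z|<\epsilon^2\}$, for $0<s<\epsilon^2$ I take $h_s$ supported in $\{s^2\le|z|\le\epsilon^2/2\}$, equal to $1$ on $\{s\le|z|\le\epsilon^2/4\}$ and interpolated logarithmically down to $0$ on the two annuli $\{s^2\le|z|\le s\}$ and $\{\epsilon^2/4\le|z|\le\epsilon^2/2\}$. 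Then $\mathrm{supp}(h_s)$ is disjoint from $\mathrm{supp}(g)$, so with $f:=g-\big(c/\!\int_\Sigma h_s\,d_\Sigma\big)h_s$ one has $\int_\Sigma f\,d_\Sigma=0$ and, all cross terms vanishing,
\[
-\int_\Sigma fLf\,d_\Sigma\;\le\;-\eta+\frac{c^2\int_\Sigma|\nabla h_s|^2\,d_\Sigma}{\big(\int_\Sigma h_s\,d_\Sigma\big)^2}.
\]
Here $\int_\Sigma|\nabla h_s|^2\,d_\Sigma=\tfrac{2\pi}{\log(1/s)}+\tfrac{2\pi}{\log2}$ stays bounded, while $\int_\Sigma h_s\,d_\Sigma\ge\mathrm{Area}(\{s\le|z|\le\epsilon^2/4\})\to\infty$ as $s\to0$; hence the second term tends to $0$ and $-\int_\Sigma fLf\,d_\Sigma<0$ for $s$ sufficiently small. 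Since $f$ is piecewise smooth with compact support, this is the desired function.

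The point needing genuine care is the first stage: the mere nontriviality of $q$ suffices to destabilize $L$ only because the conformal model of $\Sigma$ --- rather than any curvature or metric estimate --- supplies cut-offs of vanishing Dirichlet energy near the punctures; once that is in hand, the infinite area of a single end (necessarily visible in every neighbourhood of its puncture) is exactly what lets a unit-height bump of enormous mass but tiny energy absorb the mean value of $g$.
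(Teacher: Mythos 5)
Your proof is correct. Note that the paper does not actually prove this statement --- its ``proof'' is the single line ``See \cite{DS}'' --- so there is no in-paper argument to compare against; what you have written is essentially a faithful reconstruction of Da Silveira's own argument: conformal invariance of the Dirichlet integral plus the logarithmic capacity of annuli supplies a compactly supported destabilizing function $g_\epsilon$ with energy $2\pi m/\log(1/\epsilon)\to 0$ against $\int_\Sigma q\,g_\epsilon^2\to\int_\Sigma q>0$, and the infinite-area end supplies a disjointly supported bump of bounded energy and arbitrarily large mass to restore the constraint $\int_\Sigma f\,d_\Sigma=0$ at negligible cost. All the steps check out (disjointness of supports so the quadratic form splits, finiteness of $c=\int_\Sigma g\,d_\Sigma$, and the fact that every punctured neighbourhood of an infinite-area end still has infinite area because its complement in the end is relatively compact); the only cosmetic slip is calling $\{s<|z|<\tfrac12\}$ a ``compact annulus'' when you mean that its closure in $\Sigma$ is compact, hence it has finite area.
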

\begin{proof}
See \cite{DS}.
\end{proof}

\end{section}

\end{document}